\documentclass[a4paper,10pt]{amsart}
\usepackage{latexsym}
\usepackage{amssymb}
\usepackage{amsmath}

\setlength{\textwidth}{5.5in}
\setlength{\oddsidemargin}{.5in}
\setlength{\evensidemargin}{.5in}
\setlength{\topmargin}{0in}

\newtheorem{theorem}{Theorem}
\newtheorem{proposition}[theorem]{Proposition}
\newtheorem{corollary}[theorem]{Corollary}
\newtheorem{lemma}[theorem]{Lemma}
\newtheorem{example}[theorem]{Example}

\newcommand{\RMod}{\bf R{\rm \mbox{{\bf -Mod}}}}
\newcommand{\SMod}{\bf S{\rm \mbox{{\bf -Mod}}}}
\newcommand{\Rmod}{R{\rm \mbox{-mod}}}
\newcommand{\modR}{{\rm \mbox{mod-}}R}
\newcommand{\Smod}{S{\rm \mbox{-mod}}}

\newcommand{\Ring}{\bf {\rm \mbox{{\bf Ring}}}}
\newcommand{\bAb}{\bf {\rm \mbox{{\bf Ab}}}}

\newcommand{\Gal}{{\rm \mbox{Gal}}}

\newcommand{\op}{{\rm \mbox{op}}}
\newcommand{\tr}{{\rm \mbox{tr}}}
\newcommand{\ab}{{\rm \mbox{ab}}}
\newcommand{\Ab}{{\rm \mbox{Ab}}}

\newcommand{\Ev}{{\rm \mbox{Ev}}}

\newcommand{\Sol}{{\rm \mbox{Sol}}}
\newcommand{\Latt}{{\rm \mbox{Latt}}}

\newcommand{\Coker}{{\rm \mbox{Coker }}}

\newcommand{\ba}{ {\bf a}}
\newcommand{\bb}{ {\bf b}}
\newcommand{\bc}{ {\bf c}}
\newcommand{\bd}{ {\bf d}}

\newcommand{\bv}{ {\bf v}}
\newcommand{\bw}{ {\bf w}}

\newcommand{\zero}{ {\bf 0}}
\newcommand{\bei}{ {\bf e_i}}

\newcommand{\grg}{\gamma}
\newcommand{\gri}{\iota}

\newcommand{\grl}{\lambda}

\newcommand{\grd}{\delta}

\newcommand{\grG}{\Gamma}

\newcommand{\grk}{\kappa}

\newcommand{\grf}{\varphi}

\newcommand{\bZ}{\mathbb{bZ}}
\newcommand{\isom}{\cong}
\newcommand{\tensor}{\otimes}

\newcommand{\Lang}{{\mathcal L}}

\newcommand{\dis}{\displaystyle}




\newcommand{\tlowername}[2]%
{$\stackrel{\makebox[1pt]{#1}}%
{\begin{picture}(0,0)%
\put(0,0){\makebox(0,6)[t]{\makebox[1pt]{$#2$}}}%
\end{picture}}$}%

%

%

%

%


\newcommand{\AR}[1]%
{\begin{picture}(#1,0)%
\put(0,0){\vector(1,0){#1}}%
\end{picture}}%

\newcommand{\DOTAR}[1]%
{\NUMBEROFDOTS=#1%
\divide\NUMBEROFDOTS by 3%
\begin{picture}(#1,0)%
\multiput(0,0)(3,0){\NUMBEROFDOTS}{\circle*{1}}%
\put(#1,0){\vector(1,0){0}}%
\end{picture}}%

\newcommand{\MONO}[1]%
{\begin{picture}(#1,0)%
\put(0,0){\vector(1,0){#1}}%
\put(2,-2){\line(0,1){4}}%
\end{picture}}%

\newcommand{\EPI}[1]%
{\begin{picture}(#1,0)(-#1,0)%
\put(-#1,0){\vector(1,0){#1}}%
\put(-6,-2){\line(0,1){4}}%
\end{picture}}%

\newcommand{\BIMO}[1]%
{\begin{picture}(#1,0)(-#1,0)%
\put(-#1,0){\vector(1,0){#1}}%
\put(-6,-2){\line(0,1){4}}%
\put(-#1,-2){\hspace{2pt}\line(0,1){4}}%
\end{picture}}%

\newcommand{\BIAR}[1]%
{\begin{picture}(#1,4)%
\put(0,0){\vector(1,0){#1}}%
\put(0,4){\vector(1,0){#1}}%
\end{picture}}%

\newcommand{\EQL}[1]%
{\begin{picture}(#1,0)%
\put(0,1){\line(1,0){#1}}%
\put(0,-1){\line(1,0){#1}}%
\end{picture}}%

\newcommand{\ADJAR}[1]%
{\begin{picture}(#1,4)%
\put(0,0){\vector(1,0){#1}}%
\put(#1,4){\vector(-1,0){#1}}%
\end{picture}}%


%

%

%

%

%

%

%

%

%

%

%

%

%

%

%

%

%


\newcommand{\BKAR}[1]%
{\begin{picture}(#1,0)%
\put(#1,0){\vector(-1,0){#1}}%
\end{picture}}%

\newcommand{\BKDOTAR}[1]%
{\NUMBEROFDOTS=#1%
\divide\NUMBEROFDOTS by 3%
\begin{picture}(#1,0)%
\multiput(#1,0)(-3,0){\NUMBEROFDOTS}{\circle*{1}}%
\put(0,0){\vector(-1,0){0}}%
\end{picture}}%

\newcommand{\BKMONO}[1]%
{\begin{picture}(#1,0)(-#1,0)%
\put(0,0){\vector(-1,0){#1}}%
\put(-2,-2){\line(0,1){4}}%
\end{picture}}%

\newcommand{\BKEPI}[1]%
{\begin{picture}(#1,0)%
\put(#1,0){\vector(-1,0){#1}}%
\put(6,-2){\line(0,1){4}}%
\end{picture}}%

\newcommand{\BKBIMO}[1]%
{\begin{picture}(#1,0)%
\put(#1,0){\vector(-1,0){#1}}%
\put(6,-2){\line(0,1){4}}%
\put(#1,-2){\hspace{-2pt}\line(0,1){4}}%
\end{picture}}%

\newcommand{\BKBIAR}[1]%
{\begin{picture}(#1,4)%
\put(#1,0){\vector(-1,0){#1}}%
\put(#1,4){\vector(-1,0){#1}}%
\end{picture}}%

\newcommand{\BKADJAR}[1]%
{\begin{picture}(#1,4)%
\put(0,4){\vector(1,0){#1}}%
\put(#1,0){\vector(-1,0){#1}}%
\end{picture}}%


%

%

%

%

%

%

%

%

%

%

%

%

%

%

%

%

%


\newcommand{\lowername}[2]%
{$\stackrel{\makebox[1pt]{#1}}%
{\begin{picture}(0,0)%
\truex{600}%
\put(0,0){\makebox(0,\value{x})[t]{\makebox[1pt]{$#2$}}}%
\end{picture}}$}%

\newcommand{\hcase}[2]%
{\makebox[0pt]%
{\raisebox{-1pt}[0pt][0pt]{#1{#2}}}}%

\newcommand{\Hcase}[3]%
{\makebox[0pt]
{\raisebox{-1pt}[0pt][0pt]%
{$\stackrel{\makebox[0pt]{$\textstyle{#2}$}}{#1{#3}}$}}}%

\newcommand{\hcasE}[3]%
{\makebox[0pt]%
{\raisebox{-9pt}[0pt][0pt]%
{\lowername{#1{#3}}{#2}}}}%

\newcommand{\hbicase}[2]%
{\makebox[0pt]%
{\raisebox{-2.5pt}[0pt][0pt]{#1{#2}}}}%

\newcommand{\Hbicase}[4]%
{\makebox[0pt]
{\raisebox{-10.5pt}[0pt][0pt]%
{$\stackrel{\makebox[0pt]{$\textstyle{#2}$}}%
{\mbox{\lowername{#1{#4}}{#3}}}$}}}%


\newcommand{\EAR}[1]%
{\begin{picture}(#1,0)%
\put(0,0){\vector(1,0){#1}}%
\end{picture}}%

\newcommand{\EDOTAR}[1]%
{\truex{100}\truey{300}%
\NUMBEROFDOTS=#1%
\divide\NUMBEROFDOTS by \value{y}%
\begin{picture}(#1,0)%
\multiput(0,0)(\value{y},0){\NUMBEROFDOTS}%
{\circle*{\value{x}}}%
\put(#1,0){\vector(1,0){0}}%
\end{picture}}%

\newcommand{\EMONO}[1]%
{\begin{picture}(#1,0)%
\put(0,0){\vector(1,0){#1}}%
\truex{300}\truey{600}%
\put(\value{x},-\value{x}){\line(0,1){\value{y}}}%
\end{picture}}%

\newcommand{\EEPI}[1]%
{\begin{picture}(#1,0)(-#1,0)%
\put(-#1,0){\vector(1,0){#1}}%
\truex{300}\truey{600}\truez{800}%
\put(-\value{z},-\value{x}){\line(0,1){\value{y}}}%
\end{picture}}%

\newcommand{\EBIMO}[1]%
{\begin{picture}(#1,0)(-#1,0)%
\put(-#1,0){\vector(1,0){#1}}%
\truex{300}\truey{600}\truez{800}%
\put(-\value{z},-\value{x}){\line(0,1){\value{y}}}%
\put(-#1,-\value{x}){\hspace{3pt}\line(0,1){\value{y}}}%
\end{picture}}%

\newcommand{\EBIAR}[1]%
{\truex{400}%
\begin{picture}(#1,\value{x})%
\put(0,0){\vector(1,0){#1}}%
\put(0,\value{x}){\vector(1,0){#1}}%
\end{picture}}%

\newcommand{\EEQL}[1]%
{\begin{picture}(#1,0)%
\truex{200}%
\put(0,\value{x}){\line(1,0){#1}}%
\put(0,0){\line(1,0){#1}}%
\end{picture}}%

\newcommand{\EADJAR}[1]%
{\truex{400}%
\begin{picture}(#1,\value{x})%
\put(0,0){\vector(1,0){#1}}%
\put(#1,\value{x}){\vector(-1,0){#1}}%
\end{picture}}%


%

\newcommand{\ear}%
{\hspace{\SOURCE\unitlength}%
\hcase{\EAR}{\ARROWLENGTH}}%

\newcommand{\Earv}[2]{\Hcase{\EAR}{#1}{#200}}%

\newcommand{\Ear}[1]%
{\hspace{\SOURCE\unitlength}%
\Hcase{\EAR}{#1}{\ARROWLENGTH}}%

%

\newcommand{\eaR}[1]%
{\hspace{\SOURCE\unitlength}%
\hcasE{\EAR}{#1}{\ARROWLENGTH}}%

%

\newcommand{\edotar}%
{\hspace{\SOURCE\unitlength}%
\hcase{\EDOTAR}{\ARROWLENGTH}}%

%

\newcommand{\Edotar}[1]%
{\hspace{\SOURCE\unitlength}%
\Hcase{\EDOTAR}{#1}{\ARROWLENGTH}}%

%

\newcommand{\edotaR}[1]%
{\hspace{\SOURCE\unitlength}%
\hcasE{\EDOTAR}{#1}{\ARROWLENGTH}}%

%

\newcommand{\emono}%
{\hspace{\SOURCE\unitlength}%
\hcase{\EMONO}{\ARROWLENGTH}}%

%

\newcommand{\Emono}[1]%
{\hspace{\SOURCE\unitlength}%
\Hcase{\EMONO}{#1}{\ARROWLENGTH}}%

%

\newcommand{\emonO}[1]%
{\hspace{\SOURCE\unitlength}%
\hcasE{\EMONO}{#1}{\ARROWLENGTH}}%

%

\newcommand{\eepi}%
{\hspace{\SOURCE\unitlength}%
\hcase{\EEPI}{\ARROWLENGTH}}%

%

\newcommand{\Eepi}[1]%
{\hspace{\SOURCE\unitlength}%
\Hcase{\EEPI}{#1}{\ARROWLENGTH}}%

%

\newcommand{\eepI}[1]%
{\hspace{\SOURCE\unitlength}%
\hcasE{\EEPI}{#1}{\ARROWLENGTH}}%

%

\newcommand{\ebimo}%
{\hspace{\SOURCE\unitlength}%
\hcase{\EBIMO}{\ARROWLENGTH}}%

%

\newcommand{\Ebimo}[1]%
{\hspace{\SOURCE\unitlength}%
\Hcase{\EBIMO}{#1}{\ARROWLENGTH}}%

%

\newcommand{\ebimO}[1]%
{\hspace{\SOURCE\unitlength}%
\hcasE{\EBIMO}{#1}{\ARROWLENGTH}}%

%

\newcommand{\eiso}%
{\hspace{\SOURCE\unitlength}%
\Hcase{\EAR}{\cong}{\ARROWLENGTH}}%

%

\newcommand{\Eiso}[1]%
{\hspace{\SOURCE\unitlength}%
\Hcase{\EAR}{\cong#1}{\ARROWLENGTH}}%

%

\newcommand{\eisO}[1]%
{\hspace{\SOURCE\unitlength}%
\hcasE{\EAR}{\cong#1}{\ARROWLENGTH}}%

%

\newcommand{\ebiar}%
{\hspace{\SOURCE\unitlength}%
\hbicase{\EBIAR}{\ARROWLENGTH}}%

%

\newcommand{\Ebiar}[2]%
{\hspace{\SOURCE\unitlength}%
\Hbicase{\EBIAR}{#1}{#2}{\ARROWLENGTH}}%

%

\newcommand{\eeql}%
{\hspace{\SOURCE\unitlength}%
\hbicase{\EEQL}{\ARROWLENGTH}}%

%

\newcommand{\eadjar}%
{\hspace{\SOURCE\unitlength}%
\hbicase{\EADJAR}{\ARROWLENGTH}}%

%

\newcommand{\Eadjar}[2]%
{\hspace{\SOURCE\unitlength}%
\Hbicase{\EADJAR}{#1}{#2}{\ARROWLENGTH}}%


\newcommand{\WAR}[1]%
{\begin{picture}(#1,0)%
\put(#1,0){\vector(-1,0){#1}}%
\end{picture}}%

\newcommand{\WDOTAR}[1]%
{\truex{100}\truey{300}%
\NUMBEROFDOTS=#1%
\divide\NUMBEROFDOTS by \value{y}%
\begin{picture}(#1,0)%
\multiput(#1,0)(-\value{y},0){\NUMBEROFDOTS}%
{\circle*{\value{x}}}%
\put(0,0){\vector(-1,0){0}}%
\end{picture}}%

\newcommand{\WMONO}[1]%
{\begin{picture}(#1,0)(-#1,0)%
\put(0,0){\vector(-1,0){#1}}%
\truex{300}\truey{600}%
\put(-\value{x},-\value{x}){\line(0,1){\value{y}}}%
\end{picture}}%

\newcommand{\WEPI}[1]%
{\begin{picture}(#1,0)%
\put(#1,0){\vector(-1,0){#1}}%
\truex{300}\truey{600}\truez{800}%
\put(\value{z},-\value{x}){\line(0,1){\value{y}}}%
\end{picture}}%

\newcommand{\WBIMO}[1]%
{\begin{picture}(#1,0)%
\put(#1,0){\vector(-1,0){#1}}%
\truex{300}\truey{600}\truez{800}%
\put(\value{z},-\value{x}){\line(0,1){\value{y}}}%
\put(#1,-\value{x}){\hspace{-3pt}\line(0,1){\value{y}}}%
\end{picture}}%

\newcommand{\WBIAR}[1]%
{\truex{400}%
\begin{picture}(#1,\value{x})%
\put(#1,0){\vector(-1,0){#1}}%
\put(#1,\value{x}){\vector(-1,0){#1}}%
\end{picture}}%

\newcommand{\WADJAR}[1]%
{\truex{400}%
\begin{picture}(#1,\value{x})%
\put(0,\value{x}){\vector(1,0){#1}}%
\put(#1,0){\vector(-1,0){#1}}%
\end{picture}}%


%

\newcommand{\war}%
{\hspace{\SOURCE\unitlength}%
\hcase{\WAR}{\ARROWLENGTH}}%

%

\newcommand{\War}[1]%
{\hspace{\SOURCE\unitlength}%
\Hcase{\WAR}{#1}{\ARROWLENGTH}}%

%

\newcommand{\waR}[1]%
{\hspace{\SOURCE\unitlength}%
\hcasE{\WAR}{#1}{\ARROWLENGTH}}%

%

\newcommand{\wdotar}%
{\hspace{\SOURCE\unitlength}%
\hcase{\WDOTAR}{\ARROWLENGTH}}%

%

\newcommand{\Wdotar}[1]%
{\hspace{\SOURCE\unitlength}%
\Hcase{\WDOTAR}{#1}{\ARROWLENGTH}}%

%

\newcommand{\wdotaR}[1]%
{\hspace{\SOURCE\unitlength}%
\hcasE{\WDOTAR}{#1}{\ARROWLENGTH}}%

%

\newcommand{\wmono}%
{\hspace{\SOURCE\unitlength}%
\hcase{\WMONO}{\ARROWLENGTH}}%

%

\newcommand{\Wmono}[1]%
{\hspace{\SOURCE\unitlength}%
\Hcase{\WMONO}{#1}{\ARROWLENGTH}}%

%

\newcommand{\wmonO}[1]%
{\hspace{\SOURCE\unitlength}%
\hcasE{\WMONO}{#1}{\ARROWLENGTH}}%

%

\newcommand{\wepi}%
{\hspace{\SOURCE\unitlength}%
\hcase{\WEPI}{\ARROWLENGTH}}%

%

\newcommand{\Wepi}[1]%
{\hspace{\SOURCE\unitlength}%
\Hcase{\WEPI}{#1}{\ARROWLENGTH}}%

%

\newcommand{\wepI}[1]%
{\hspace{\SOURCE\unitlength}%
\hcasE{\WEPI}{#1}{\ARROWLENGTH}}%

%

\newcommand{\wbimo}%
{\hspace{\SOURCE\unitlength}%
\hcase{\WBIMO}{\ARROWLENGTH}}%

%

\newcommand{\Wbimo}[1]%
{\hspace{\SOURCE\unitlength}%
\Hcase{\WBIMO}{#1}{\ARROWLENGTH}}%

%

\newcommand{\wbimO}[1]%
{\hspace{\SOURCE\unitlength}%
\hcasE{\WBIMO}{#1}{\ARROWLENGTH}}%

%

\newcommand{\wiso}%
{\hspace{\SOURCE\unitlength}%
\Hcase{\WAR}{\cong}{\ARROWLENGTH}}%

%

\newcommand{\Wiso}[1]%
{\hspace{\SOURCE\unitlength}%
\Hcase{\WAR}{#1}{\ARROWLENGTH}}%

%

\newcommand{\wisO}[1]%
{\hspace{\SOURCE\unitlength}%
\hcasE{\WAR}{#1}{\ARROWLENGTH}}%

%

\newcommand{\wbiar}%
{\hspace{\SOURCE\unitlength}%
\hbicase{\WBIAR}{\ARROWLENGTH}}%

%

\newcommand{\Wbiar}[2]%
{\hspace{\SOURCE\unitlength}%
\Hbicase{\WBIAR}{#1}{#2}{\ARROWLENGTH}}%

%

\newcommand{\weql}%
{\hspace{\SOURCE\unitlength}%
\hbicase{\EEQL}{\ARROWLENGTH}}%

%

\newcommand{\wadjar}%
{\hspace{\SOURCE\unitlength}%
\hbicase{\WADJAR}{\ARROWLENGTH}}%

%

\newcommand{\Wadjar}[2]%
{\hspace{\SOURCE\unitlength}%
\Hbicase{\WADJAR}{#1}{#2}{\ARROWLENGTH}}%


%

\newcommand{\Vcase}[3]{\makebox[0pt]%
{\makebox[0pt][r]{\raisebox{0pt}[0pt][0pt]{${#2}\hspace{2pt}$}}}#1{#3}}%

\newcommand{\vcasE}[3]{\makebox[0pt]%
{#1{#3}\makebox[0pt][l]{\raisebox{0pt}[0pt][0pt]{\hspace{2pt}$#2$}}}}%

%

\newcommand{\Vbicase}[4]{\makebox[0pt]%
{\makebox[0pt][r]{\raisebox{0pt}[0pt][0pt]{$#2$\hspace{4pt}}}#1{#4}%
\makebox[0pt][l]{\raisebox{0pt}[0pt][0pt]{\hspace{5pt}$#3$}}}}%


\newcommand{\SAR}[1]%
{\begin{picture}(0,0)%
\put(0,0){\makebox(0,0)%
{\begin{picture}(0,#1)%
\put(0,#1){\vector(0,-1){#1}}%
\end{picture}}}\end{picture}}%

\newcommand{\SDOTAR}[1]%
{\truex{100}\truey{300}%
\NUMBEROFDOTS=#1%
\divide\NUMBEROFDOTS by \value{y}%
\begin{picture}(0,0)%
\put(0,0){\makebox(0,0)%
{\begin{picture}(0,#1)%
\multiput(0,#1)(0,-\value{y}){\NUMBEROFDOTS}%
{\circle*{\value{x}}}%
\put(0,0){\vector(0,-1){0}}%
\end{picture}}}\end{picture}}%

\newcommand{\SMONO}[1]%
{\begin{picture}(0,0)%
\put(0,0){\makebox(0,0)%
{\begin{picture}(0,#1)%
\put(0,#1){\vector(0,-1){#1}}%
\truex{300}\truey{600}%
\put(0,#1){\begin{picture}(0,0)%
\put(-\value{x},-\value{x}){\line(1,0){\value{y}}}\end{picture}}%
\end{picture}}}\end{picture}}%

\newcommand{\SEPI}[1]%
{\begin{picture}(0,0)%
\put(0,0){\makebox(0,0)%
{\begin{picture}(0,#1)%
\put(0,#1){\vector(0,-1){#1}}%
\truex{300}\truey{600}\truez{800}%
\put(-\value{x},\value{z}){\line(1,0){\value{y}}}%
\end{picture}}}\end{picture}}%

\newcommand{\SBIMO}[1]%
{\begin{picture}(0,0)%
\put(0,0){\makebox(0,0)%
{\begin{picture}(0,#1)%
\put(0,#1){\vector(0,-1){#1}}%
\truex{300}\truey{600}\truez{800}%
\put(0,#1){\begin{picture}(0,0)%
\put(-\value{x},-\value{x}){\line(1,0){\value{y}}}\end{picture}}%
\put(-\value{x},\value{z}){\line(1,0){\value{y}}}%
\end{picture}}}\end{picture}}%

\newcommand{\SBIAR}[1]%
{\begin{picture}(0,0)%
\truex{200}%
\put(0,0){\makebox(0,0)%
{\begin{picture}(0,#1)\put(-\value{x},#1){\vector(0,-1){#1}}%
\put(\value{x},#1){\vector(0,-1){#1}}%
\end{picture}}}\end{picture}}%

\newcommand{\SEQL}[1]%
{\begin{picture}(0,0)%
\truex{100}%
\put(0,0){\makebox(0,0)%
{\begin{picture}(0,#1)\put(-\value{x},#1){\line(0,-1){#1}}%
\put(\value{x},#1){\line(0,-1){#1}}%
\end{picture}}}\end{picture}}%

%


%

%

\newcommand{\Sarv}[2]{\Vcase{\SAR}{#1}{#200}}%

\newcommand{\Sar}[1]{\Sarv{#1}{50}}%

\newcommand{\saRv}[2]{\vcasE{\SAR}{#1}{#200}}%

\newcommand{\saR}[1]{\saRv{#1}{50}}%

%

%

%

%

%

%

%

%

%

%

%

%

%

%

%

%

%

%

%

%

%

%

%

%

%

%

\newcommand{\Sisov}[2]%
{\Vbicase{\SAR}{#1\hspace{-2pt}}{\hspace{-2pt}\cong}{#200}}%

%

%

%

%

%

%

%

%

%

%

%


\newcommand{\NAR}[1]%
{\begin{picture}(0,0)%
\put(0,0){\makebox(0,0)%
{\begin{picture}(0,#1)\put(0,0){\vector(0,1){#1}}%
\end{picture}}}\end{picture}}%

\newcommand{\NDOTAR}[1]%
{\truex{100}\truey{300}%
\NUMBEROFDOTS=#1%
\divide\NUMBEROFDOTS by \value{y}%
\begin{picture}(0,0)%
\put(0,0){\makebox(0,0)%
{\begin{picture}(0,#1)%
\multiput(0,0)(0,\value{y}){\NUMBEROFDOTS}%
{\circle*{\value{x}}}%
\put(0,#1){\vector(0,1){0}}%
\end{picture}}}\end{picture}}%

\newcommand{\NMONO}[1]%
{\begin{picture}(0,0)%
\put(0,0){\makebox(0,0)%
{\begin{picture}(0,#1)%
\put(0,0){\vector(0,1){#1}}%
\truex{300}\truey{600}%
\put(-\value{x},\value{x}){\line(1,0){\value{y}}}%
\end{picture}}}%
\end{picture}}%

\newcommand{\NEPI}[1]%
{\begin{picture}(0,0)%
\put(0,0){\makebox(0,0)%
{\begin{picture}(0,#1)%
\put(0,0){\vector(0,1){#1}}%
\truex{300}\truey{600}\truez{800}%
\put(0,#1){\begin{picture}(0,0)%
\put(-\value{x},-\value{z}){\line(1,0){\value{y}}}\end{picture}}%
\end{picture}}}\end{picture}}%

\newcommand{\NBIMO}[1]%
{\begin{picture}(0,0)%
\put(0,0){\makebox(0,0)%
{\begin{picture}(0,#1)%
\put(0,0){\vector(0,1){#1}}%
\truex{300}\truey{600}\truez{800}%
\put(-\value{x},\value{x}){\line(1,0){\value{y}}}%
\put(0,#1){\begin{picture}(0,0)%
\put(-\value{x},-\value{z}){\line(1,0){\value{y}}}\end{picture}}%
\end{picture}}}\end{picture}}%

\newcommand{\NBIAR}[1]%
{\begin{picture}(0,0)%
\truex{200}%
\put(0,0){\makebox(0,0)%
{\begin{picture}(0,#1)\put(-\value{x},0){\vector(0,1){#1}}%
\put(\value{x},0){\vector(0,1){#1}}%
\end{picture}}}\end{picture}}%

\newcommand{\Nisov}[2]%
{\Vbicase{\NAR}{#1\hspace{-2pt}}{\hspace{-2pt}\cong}{#200}}%

%

%

%

%

%

%

%

%

%

%

%


\newcommand{\fdcase}[3]{\begin{picture}(0,0)%
\put(0,-150){#1}%
\truex{200}\truey{600}\truez{600}%
\put(-\value{x},-\value{x}){\makebox(0,\value{z})[r]{${#2}$}}%
\put(\value{x},-\value{y}){\makebox(0,\value{z})[l]{${#3}$}}%
\end{picture}}%

%


%

\newcommand{\NEDOTAR}%
{\truex{100}\truey{212}%
\NUMBEROFDOTS=5800%
\divide\NUMBEROFDOTS by \value{y}%
\begin{picture}(0,0)%
\multiput(-2900,-2900)(\value{y},\value{y}){\NUMBEROFDOTS}%
{\circle*{\value{x}}}%
\put(2900,2900){\vector(1,1){0}}%
\end{picture}}%

\newcommand{\SWAR}{\begin{picture}(0,0)%
\put(2900,2900){\vector(-1,-1){5800}}%
\end{picture}}%

\newcommand{\SWDOTAR}%
{\truex{100}\truey{212}%
\NUMBEROFDOTS=5800%
\divide\NUMBEROFDOTS by \value{y}%
\begin{picture}(0,0)%
\multiput(2900,2900)(-\value{y},-\value{y}){\NUMBEROFDOTS}%
{\circle*{\value{x}}}%
\put(-2900,-2900){\vector(-1,-1){0}}%
\end{picture}}%

%

%

%

%

%

%


%

%

\newcommand{\swaR}[1]{\fdcase{\SWAR}{}{#1}}%

%

%

%

%

%

%

%

%

%

%

%

%

%

%

%

%

%

%

%


%

%

%


\newcommand{\sdcase}[3]{\begin{picture}(0,0)%
\put(0,-150){#1}%
\truex{100}\truez{600}%
\put(\value{x},\value{x}){\makebox(0,\value{z})[l]{${#2}$}}%
\truex{300}\truey{800}%
\put(-\value{x},-\value{y}){\makebox(0,\value{z})[r]{${#3}$}}%
\end{picture}}%

%


%

\newcommand{\SEDOTAR}%
{\truex{100}\truey{212}%
\NUMBEROFDOTS=5800%
\divide\NUMBEROFDOTS by \value{y}%
\begin{picture}(0,0)%
\multiput(-2900,2900)(\value{y},-\value{y}){\NUMBEROFDOTS}%
{\circle*{\value{x}}}%
\put(2900,-2900){\vector(1,-1){0}}%
\end{picture}}%

\newcommand{\NWAR}{\begin{picture}(0,0)%
\put(2900,-2900){\vector(-1,1){5800}}%
\end{picture}}%

\newcommand{\NWDOTAR}%
{\truex{100}\truey{212}%
\NUMBEROFDOTS=5800%
\divide\NUMBEROFDOTS by \value{y}%
\begin{picture}(0,0)%
\multiput(2900,-2900)(-\value{y},\value{y}){\NUMBEROFDOTS}%
{\circle*{\value{x}}}%
\put(-2900,2900){\vector(-1,1){0}}%
\end{picture}}%

%

%

%

%

%

%


%

%

\newcommand{\nwaR}[1]{\sdcase{\NWAR}{}{#1}}%

%

%

%

%

%

%

%

%

%

%

%

%

%

%

%

%

%

%

%


%

%

%



\newcommand{\ENEAR}[2]%
{\makebox[0pt]{\begin{picture}(0,0)%
\put(0,-150){\makebox(0,0){\begin{picture}(0,0)%
\put(-6600,-3300){\vector(2,1){13200}}%
\truex{200}\truey{800}\truez{600}%
\put(-\value{x},\value{x}){\makebox(0,\value{z})[r]{${#1}$}}%
\put(\value{x},-\value{y}){\makebox(0,\value{z})[l]{${#2}$}}%
\end{picture}}}\end{picture}}}%

%

%

%

\newcommand{\ESEAR}[2]%
{\makebox[0pt]{\begin{picture}(0,0)%
\put(0,-150){\makebox(0,0){\begin{picture}(0,0)%
\put(-6600,3300){\vector(2,-1){13200}}%
\truex{200}\truey{800}\truez{600}%
\put(\value{x},\value{x}){\makebox(0,\value{z})[l]{${#1}$}}%
\put(-\value{x},-\value{y}){\makebox(0,\value{z})[r]{${#2}$}}%
\end{picture}}}\end{picture}}}%

%

%

%

\newcommand{\WNWAR}[2]%
{\makebox[0pt]{\begin{picture}(0,0)%
\put(0,-150){\makebox(0,0){\begin{picture}(0,0)%
\put(6600,-3300){\vector(-2,1){13200}}%
\truex{200}\truey{800}\truez{600}%
\put(\value{x},\value{x}){\makebox(0,\value{z})[l]{${#1}$}}%
\put(-\value{x},-\value{y}){\makebox(0,\value{z})[r]{${#2}$}}%
\end{picture}}}\end{picture}}}%

%

%

%

\newcommand{\WSWAR}[2]%
{\makebox[0pt]{\begin{picture}(0,0)%
\put(0,-150){\makebox(0,0){\begin{picture}(0,0)%
\put(6600,3300){\vector(-2,-1){13200}}%
\truex{200}\truey{800}\truez{600}%
\put(-\value{x},\value{x}){\makebox(0,\value{z})[r]{${#1}$}}%
\put(\value{x},-\value{y}){\makebox(0,\value{z})[l]{${#2}$}}%
\end{picture}}}\end{picture}}}%

%

%

%



\newcommand{\NNEAR}[2]%
{\raisebox{-1pt}[0pt][0pt]{\begin{picture}(0,0)%
\put(0,0){\makebox(0,0){\begin{picture}(0,0)%
\put(-3300,-6600){\vector(1,2){6600}}%
\truex{100}\truez{600}%
\put(-\value{x},\value{x}){\makebox(0,\value{z})[r]{${#1}$}}%
\put(\value{x},-\value{z}){\makebox(0,\value{z})[l]{${#2}$}}%
\end{picture}}}\end{picture}}}%

%

%

%

\newcommand{\SSWAR}[2]%
{\raisebox{-1pt}[0pt][0pt]{\begin{picture}(0,0)%
\put(0,0){\makebox(0,0){\begin{picture}(0,0)%
\put(3300,6600){\vector(-1,-2){6600}}%
\truex{100}\truez{600}%
\put(-\value{x},\value{x}){\makebox(0,\value{z})[r]{${#1}$}}%
\put(\value{x},-\value{z}){\makebox(0,\value{z})[l]{${#2}$}}%
\end{picture}}}\end{picture}}}%

%

%

%

\newcommand{\SSEAR}[2]%
{\raisebox{-1pt}[0pt][0pt]{\begin{picture}(0,0)%
\put(0,0){\makebox(0,0){\begin{picture}(0,0)%
\put(-3300,6600){\vector(1,-2){6600}}%
\truex{200}\truez{600}%
\put(\value{x},\value{x}){\makebox(0,\value{z})[l]{${#1}$}}%
\put(-\value{x},-\value{z}){\makebox(0,\value{z})[r]{${#2}$}}%
\end{picture}}}\end{picture}}}%

%

%

%

\newcommand{\NNWAR}[2]%
{\raisebox{-1pt}[0pt][0pt]{\begin{picture}(0,0)%
\put(0,0){\makebox(0,0){\begin{picture}(0,0)%
\put(3300,-6600){\vector(-1,2){6600}}%
\truex{200}\truez{600}%
\put(\value{x},\value{x}){\makebox(0,\value{z})[l]{${#1}$}}%
\put(-\value{x},-\value{z}){\makebox(0,\value{z})[r]{${#2}$}}%
\end{picture}}}\end{picture}}}%

%

%

%



\newcommand{\Necurve}[2]%
{\begin{picture}(0,0)%
\truex{1300}\truey{2000}\truez{200}%
\put(0,\value{x}){\oval(#200,\value{y})[t]}%
\put(0,\value{x}){\makebox(0,0){\begin{picture}(#200,0)%
\put(#200,0){\vector(0,-1){\value{z}}}%
\put(0,0){\line(0,-1){\value{z}}}\end{picture}}}%
\truex{2500}%
\put(0,\value{x}){\makebox(0,0)[b]{${#1}$}}%
\end{picture}}%

%

\newcommand{\Nwcurve}[2]%
{\begin{picture}(0,0)%
\truex{1300}\truey{2000}\truez{200}%
\put(0,\value{x}){\oval(#200,\value{y})[t]}%
\put(0,\value{x}){\makebox(0,0){\begin{picture}(#200,0)%
\put(#200,0){\line(0,-1){\value{z}}}%
\put(0,0){\vector(0,-1){\value{z}}}\end{picture}}}%
\truex{2500}%
\put(0,\value{x}){\makebox(0,0)[b]{${#1}$}}%
\end{picture}}%

%

\newcommand{\Securve}[2]%
{\begin{picture}(0,0)%
\truex{1300}\truey{2000}\truez{200}%
\put(0,-\value{x}){\oval(#200,\value{y})[b]}%
\put(0,-\value{x}){\makebox(0,0){\begin{picture}(#200,0)%
\put(#200,0){\vector(0,1){\value{z}}}%
\put(0,0){\line(0,1){\value{z}}}\end{picture}}}%
\truex{2500}%
\put(0,-\value{x}){\makebox(0,0)[t]{${#1}$}}%
\end{picture}}%

%

\newcommand{\Swcurve}[2]%
{\begin{picture}(0,0)%
\truex{1300}\truey{2000}\truez{200}%
\put(0,-\value{x}){\oval(#200,\value{y})[b]}%
\put(0,-\value{x}){\makebox(0,0){\begin{picture}(#200,0)%
\put(#200,0){\line(0,1){\value{z}}}%
\put(0,0){\vector(0,1){\value{z}}}\end{picture}}}%
\truex{2500}%
\put(0,-\value{x}){\makebox(0,0)[t]{${#1}$}}%
\end{picture}}%

%



\newcommand{\Escurve}[2]%
{\begin{picture}(0,0)%
\truex{1400}\truey{2000}\truez{200}%
\put(\value{x},0){\oval(\value{y},#200)[r]}%
\put(\value{x},0){\makebox(0,0){\begin{picture}(0,#200)%
\put(0,0){\vector(-1,0){\value{z}}}%
\put(0,#200){\line(-1,0){\value{z}}}\end{picture}}}%
\truex{2500}%
\put(\value{x},0){\makebox(0,0)[l]{${#1}$}}%
\end{picture}}%

%

\newcommand{\Encurve}[2]%
{\begin{picture}(0,0)%
\truex{1400}\truey{2000}\truez{200}%
\put(\value{x},0){\oval(\value{y},#200)[r]}%
\put(\value{x},0){\makebox(0,0){\begin{picture}(0,#200)%
\put(0,0){\line(-1,0){\value{z}}}%
\put(0,#200){\vector(-1,0){\value{z}}}\end{picture}}}%
\truex{2500}%
\put(\value{x},0){\makebox(0,0)[l]{${#1}$}}%
\end{picture}}%

%

\newcommand{\Wscurve}[2]%
{\begin{picture}(0,0)%
\truex{1300}\truey{2000}\truez{200}%
\put(-\value{x},0){\oval(\value{y},#200)[l]}%
\put(-\value{x},0){\makebox(0,0){\begin{picture}(0,#200)%
\put(0,0){\vector(1,0){\value{z}}}%
\put(0,#200){\line(1,0){\value{z}}}\end{picture}}}%
\truex{2400}%
\put(-\value{x},0){\makebox(0,0)[r]{${#1}$}}%
\end{picture}}%

%

\newcommand{\Wncurve}[2]%
{\begin{picture}(0,0)%
\truex{1300}\truey{2000}\truez{200}%
\put(-\value{x},0){\oval(\value{y},#200)[l]}%
\put(-\value{x},0){\makebox(0,0){\begin{picture}(0,#200)%
\put(0,0){\line(1,0){\value{z}}}%
\put(0,#200){\vector(1,0){\value{z}}}\end{picture}}}%
\truex{2400}%
\put(-\value{x},0){\makebox(0,0)[r]{${#1}$}}%
\end{picture}}%

%



%

%


\newcount\SCALE%

\newcount\NUMBER%

\newcount\LINE%

\newcount\COLUMN%

\newcount\WIDTH%

\newcount\SOURCE%

\newcount\ARROW%

\newcount\TARGET%

\newcount\ARROWLENGTH%

\newcount\NUMBEROFDOTS%

\newcounter{x}%

\newcounter{y}%

\newcounter{z}%

\newcounter{horizontal}%

\newcounter{vertical}%

\newskip\itemlength%

\newskip\firstitem%

\newskip\seconditem%

\newcommand{\printarrow}{}%


\newcommand{\truex}[1]{%
\NUMBER=#1%
\multiply\NUMBER by 100%
\divide\NUMBER by \SCALE%
\setcounter{x}{\NUMBER}}%

\newcommand{\truey}[1]{%
\NUMBER=#1%
\multiply\NUMBER by 100%
\divide\NUMBER by \SCALE%
\setcounter{y}{\NUMBER}}%

\newcommand{\truez}[1]{%
\NUMBER=#1%
\multiply\NUMBER by 100%
\divide\NUMBER by \SCALE%
\setcounter{z}{\NUMBER}}%

\newcommand{\changecounters}[1]{%
\SOURCE=\ARROW%
\ARROW=\TARGET%
\settowidth{\itemlength}{#1}%
\ifdim \itemlength > 2800\unitlength%
\addtolength{\itemlength}{-2800\unitlength}%
\TARGET=\itemlength%
\divide\TARGET by 1310%
\multiply\TARGET by 100%
\divide\TARGET by \SCALE%
\else%
\TARGET=0%
\fi%
\ARROWLENGTH=5000%
\advance\ARROWLENGTH by -\SOURCE%
\advance\ARROWLENGTH by -\TARGET%
\advance\SOURCE by -\TARGET}%

\newcommand{\initialize}[1]{%
\LINE=0%
\COLUMN=0%
\WIDTH=0%
\ARROW=0%
\TARGET=0%
\changecounters{#1}%
\renewcommand{\printarrow}{#1}%
\begin{center}%
\vspace{10pt}%
\begin{picture}(0,0)}%

\newcommand{\DIAG}[1]{%
\SCALE=100%
\setlength{\unitlength}{655sp}%
\initialize{\mbox{$#1$}}}%

\newcommand{\DIAGV}[2]{%
\SCALE=#1%
\setlength{\unitlength}{655sp}%
\multiply\unitlength by \SCALE%
\divide\unitlength by 100%
\initialize{\mbox{$#2$}}}%

\newcommand{\n}[1]{%
\changecounters{\mbox{$#1$}}%
\put(\COLUMN,\LINE){\makebox(0,0){\printarrow}}%
\thinlines%
\renewcommand{\printarrow}{\mbox{$#1$}}%
\advance\COLUMN by 4000}%

\newcommand{\nn}[1]{%
\put(\COLUMN,\LINE){\makebox(0,0){\printarrow}}%
\thinlines%
\ifnum \WIDTH < \COLUMN%
\WIDTH=\COLUMN%
\else%
\fi%
\advance\LINE by -4000%
\COLUMN=0%
\ARROW=0%
\TARGET=0%
\changecounters{\mbox{$#1$}}%
\renewcommand{\printarrow}{\mbox{$#1$}}}%

\newcommand{\conclude}{%
\put(\COLUMN,\LINE){\makebox(0,0){\printarrow}}%
\thinlines%
\ifnum \WIDTH < \COLUMN%
\WIDTH=\COLUMN%
\else%
\fi%
\setcounter{horizontal}{\WIDTH}%
\setcounter{vertical}{-\LINE}%
\end{picture}}%

\newcommand{\diag}{%
\conclude%
\raisebox{0pt}[0pt][\value{vertical}\unitlength]{}%
\hspace*{\value{horizontal}\unitlength}%
\vspace{10pt}%
\end{center}%
\setlength{\unitlength}{1pt}}%

\newcommand{\diagv}[3]{%
\conclude%
\NUMBER=#1%
\rule{0pt}{\NUMBER pt}%
\hspace*{-#2pt}%
\raisebox{0pt}[0pt][\value{vertical}\unitlength]{}%
\hspace*{\value{horizontal}\unitlength}
\NUMBER=#3%
\advance\NUMBER by 10%
\vspace*{\NUMBER pt}%
\end{center}%
\setlength{\unitlength}{1pt}}%

\newcommand{\N}[1]%
{\raisebox{0pt}[7pt][0pt]{$#1$}}%

%

%

%

\newcommand{\crosslength}[2]{%
\settowidth{\firstitem}{#1}%
\settowidth{\seconditem}{#2}%
\ifdim\firstitem < \seconditem%
\itemlength=\seconditem%
\else%
\itemlength=\firstitem%
\fi%
\divide\itemlength by 2%
\hspace{\itemlength}}%

%



%

%

%

%

%

%

\begin{document}
\date{}
\title{Linear Algebra Over a Ring}
\author{Ivo Herzog}
\address{Yonsei University, Sinchon-dong, Seodaemun-gu, Seoul 120-749, South Korea}
\email{herzog@yonsei.ac.kr}
\address{The Ohio State University at Lima, Lima, OH 45804 USA}
\email{herzog.23@osu.edu}
\thanks{The author was partially supported by a grant from Yonsei University and NSF Grant DMS-05-01207.}
\subjclass[2000]{03B22, 06C05, 15A24, 16E20, 18F30, 19D55}
\keywords{Grothendieck group, homology, finitely presented module, matrix divisibility, systems of linear equations}

\begin{abstract} 
Given an associative, not necessarily commutative, ring $R$ with identity, a formal matrix calculus is introduced and developed for pairs of matrices over $R.$ This calculus subsumes the theory of homogeneous systems of linear equations with coefficients in $R.$ In the case when the ring $R$ is a field, every pair is equivalent to a homogeneous system. 

Using the formal matrix calculus, two alternate presentations are given for the \linebreak Grothendieck group 
$K_0 (\Rmod, \oplus)$ of the category $\Rmod$ of finitely presented modules.  One of these presentations suggests a homological interpretation, and so a complex is introduced whose $0$-dimensional homology is naturally isomorphic to $K_0 (\Rmod, \oplus).$ A computation shows that if $R = k$ is a field, then the $1$-dimensional homology group is given by $(k^{\times})_{\ab}/\{ \pm 1 \},$ where $k^{\times}$ denotes the multiplicitave group of $k,$ and $(k^{\times})_{\ab}$ its abelianization.

The formal matrix calculus, which consists of three rules of matrix operation, is the syntax of a deductive system whose completeness was proved by Prest. The three rules of inference of this deductive system correspond to the three rules of matrix operation, which appear in the formal matrix calculus as the Rules of Divisibility.
\end{abstract}

\maketitle

\footskip30pt

\pagestyle{plain}

Let $R$ be an associative, not necessarily commutative, ring with identity. For every natural number $n,$ define 
$L'_n (R)$ to be the collection of pairs $(B \; |\; A)$ where $A$ and $B$ are matrices with entries in $R$ such that $A$ has $n$ columns, and $B$ has the same number of rows as $A.$ Define the relation $$(B \; |\; A) \leq_n (B' \; |\; A')$$ to hold in $L'_n (R)$ provided there exist matrices $U,$ $V$ and $G,$ of appropriate size, such that $$UB = B'V \mbox{  and  } UA = A' + B'G.$$
Separating out the individual roles of the three matrices, one verifies easily (Theorem~\ref{rules}) that this relation is the least pre-order on $L'_n (R)$ satisfying the following three Rules of (Left) Divisibility (RoD) for matrices: 
\begin{enumerate}
\item if $U$ is a matrix with $m$ columns, then $(B \; | \; A) \leq_n (UB \; | \; UA).$ 
\item if $V$ is a matrix with $k$ rows, then $(BV \; | \; A) \leq_n (B \; | \; A).$
\item if $G$ is a $k \times n$ matrix, then $(B \; | \; A + BG) \leq_n (B \; | \; A)$
\end{enumerate}
In this article, we develop the formal matrix calculus that arises from these rules.

Two pairs in $L'_n (R)$ are equivalent $(B \; |\; A) \approx_n (B' \; |\; A')$ provided both
$$(B \; |\; A) \leq_n (B' \; |\; A') \mbox{ and } (B' \; |\; A') \leq_n (B \; |\; A)$$
hold; an $n${\em -ary matrix pair} $[B \; | \; A]$ is defined to be an equivalence class of this relation. The collection of $n$-ary matrix pairs is denoted by $L_n (R);$ the partial order induced on $L_n (R)$ by $\leq_n$ is denoted using the same notation. This partial order $L_n (R)$ of $n$-ary matrix pairs has a maximum element $1_n$ that satisfies \bigskip

\noindent {\bf Proposition~\ref{maximum}.} $[B \; | \; A] = 1_n$ if and only if there exists a matrix $W$ such that $A = BW.$ \bigskip

\noindent The element $(B \; | \; A)$ in $L'_n (R)$ may thus be interpreted as the proposition ''$B$ divides $A$ on the left'' and the $n$-ary matrix pair $[B \; | \; A]$ as a point in the partial order $L_n (R)$ that measures the extent to which $B$ divides $A$ on the left. Seen in this light, RoD (1), for example, asserts that the likelihood that $UB$ divides $UA$ on the left is at least as high as the likelihood that $B$ divides $A$ on the left. 

An element of $L'_n (R)$ is called a ({\em homogeneous}) {\em system} ({\em of linear equations in $n$ variables}) provided it is of the form $(\zero \; | \; A).$ The {\em matrix of coefficients} of this system is $A$ and, for simplicity, we write $(A) := (\zero \; | \; A),$ and denote the associated $n$-ary matrix pair by $[A] := [\zero \; | \; A].$ It follows from the definition that if $A$ and $A'$ are matrices, both with $n$ columns, then $(A) \approx_n (A')$ in $L'_n (R)$ if and only if there exist matrices $U$ and $U'$ such that $UA = A'$ and $U'A' = A.$ This implies that for every left $R$-module ${_R}M,$ the two solution subgroups of $({_R}M)^n$ of the homogeneous systems with matrices of coefficients $A$ and $A',$ respectively, are the same.\bigskip 

\noindent {\bf Corollary~\ref{von Neumann}.} The ring $R$ is von Neumann regular if and only if for every $n \geq 1$ (resp., $n = 1),$ every $(B \; | \; A) \in L'_n (R)$ is equivalent to a homogeneous system.\bigskip

\noindent A field $k$ is certainly a von Neumann regular ring, so the formal matrix calculus on $L_n (k)$ coincides with the study of homogeneous systems of linear equations in $n$ variables, with coefficients in $k.$ The other main features of this matrix calculus may be summarized as follows. \bigskip

\noindent {\bf Corollary~\ref{lattice duality}.} The map 
${\dis [B \; | \; A] \mapsto \left[ \begin{array}{c|c}	B^{\tr} & \zero \\ A^{\tr} & I_n \end{array} 	\right]}$ 
is an anti-isomorphism between $L_n (R)$ and $L_n (R^{\op}).$\bigskip

\noindent {\bf Theorem~\ref{modular}.} The partial order $L_n (R)$ is a modular lattice with maximum and minimum elements.\bigskip

\noindent {\bf Theorem~\ref{epi}.} A morphism $f : R \to S$ of rings is an epimorphism if and only if for every $n \geq 1$ (resp. $n = 2$), the induced morphism $L_n (f) : L_n (R) \to L_n (S)$ is onto. \bigskip

The elements $(B \; | \; A)$ of $L'_n (R)$ may be interpreted as the syntax of a deductive system, whose completeness was proved by Prest (Lemma~1.1.13 of~\cite{Mike's book}. The three matrices that appear in the statement of Prest's result correspond to the three rules of inference (cf.\ the commentary preceding {\em ibid.}) of the deductive system, and in the formal matrix calculus find expression as the Rules of Divisibility (RoD) (1)-(3). Precisely, let $\Lang (R)$ be the language for left $R$-modules, and denote by $T(R)$ the standard collection of axioms, expressible in $\Lang (R),$ for a left $R$-module. Associate to the element $(B \; | \; A)$ of $L'_n (R)$ the formula
$$(B \; | \; A)(\bv ) :=  \exists \bw \; (B\bw \doteq A \bv),$$
where $\bv$ is a column $n$-vector of variables $v_i$ and $\bw$ a {\em column} $k$-vector of variables $w_j.$ Then we may introduce a pre-order order on $L'_n (R)$ by defining $(B \; | \; A) \vdash_n (B' \; | \; A')$ to hold provided that
$$T(R) \vdash \forall \bv \; (B|A\bv \to B'|A'\bv).$$
It is readily verified that this pre-order obeys the three Rules of Divisibility, and since $\leq_n$ is the least pre-order that obeys these rules, $(B \; | \; A) \leq_n (B' \; | \; A')$ implies
$(B \; | \; A) \vdash_n (B' \; | \; A').$
\bigskip

\noindent {\bf Theorem~\ref{Lemma Presta 1}} ({\bf Lemma Presta I.})~\cite[Lemma 1.1.13 and Cor.\ 1.1.16]{Mike's book} 
Given $(B \; | \; A)$ and $(B' \; | \; A')$ in $L'_n (R),$ 
$$(B \; | \; A) \leq_n (B' \; | \; A') \;\;\; \mbox{if and only if} \;\;\; (B \; | \; A) \vdash_n (B' \; | \; A').$$
\bigskip

\noindent Lemma Presta is a completeness theorem, for it shows that any implication between formulae of the 
form $(B \; | \; A)(\bv )$ that is provable relative to the axioms $T(R)$ is provable using the three Rules of Divisibility, construed as rules of inference. Once the relationship between these two partial orders 
on $L'_n (R)$ is established, the results cited above (Corollaries~\ref{von Neumann} and~\ref{lattice duality} and Theorems~\ref{modular} and~\ref{epi}) appear as familiar results from the model theory of modules. Corollary~\ref{von Neumann} is just elemination of quantifiers~\cite[Thm.\ 2.3.24]{Mike's book}; Corollary~\ref{lattice duality} the anti-isomorphism, discovered by Prest~\cite[\S 1.3.1]{Mike's book}, and independently, by Huisgen-Zimmermann and Zimmermann~\cite{ZZH}, between the respective partial orders of positive-primitive formulae over $R$ and the opposite ring $R^{\op};$ Theorem~\ref{modular} asserts nothing more than the fact that these partial orders are modular lattices 
(cf.~\cite[\S 1.1.3]{Mike's book}); and Theorem~\ref{epi} is a variation of Prest's result~\cite[Thm.\ 6.1.8]{Mike's book} that if $f : R \to S$ is a ring epimorphism, then the subcategory $\SMod \subseteq \RMod$ is axiomatizable in the language $\Lang (R).$  

Let $A$ be an $m \times n$ matrix; $B$ and $m \times k$ matrix and ${_R}M$ a left $R$-module. Let us consider the {\em nonhomogeneous system} of linear equations $$A \bv \doteq \bb,$$
where $\bv$ is a column $n$-vector of variables $(v_i)$ and $\bb$ a column $k$-vector with entries from $M.$ Denote by
$$\Sol_M (A \bv \doteq \bb) := \{ \ba \in ({_R}M)^n \; : \; A\ba = \bb \}$$
the subgroup of $({_R}M)^n$ of solutions in $M$ to the nonhomogeneous system. Given an element 
$(B \; | \; A) \in L'_n (R),$ define 
$$(B \; | \; A)({_R}M) := \bigcup_{\bb \in BM^k} \Sol_M (A \bv \doteq \bb).$$
This is consistent with model-theoretic notation, because $(B \; | \; A)({_R}M)$ is the subgroup of $({_R}M)^n$
defined in ${_R}M$ by the formula $(B \; | \; A)(\bv).$ Let us introduce the relation 
$$(B \; | \; A) \models_n (B' \; | \; A')$$ to hold in $L'_n (R)$ provided that 
$(B \; | \; A)({_R}M) \subseteq (B' \; | \; A')({_R}M)$ for every left $R$-module ${_R}M.$ Equivalently, 
$$T(R) \models \forall \bv \; (B|A\bv \to B'|A'\bv).$$
By G\"{o}del's Completeness Theorem, the relations $\vdash_n$ and $\models_n$ are the same on $L'_n (R),$ so that one 
obtains a second version of Lemma Presta.\bigskip

\noindent {\bf Proposition~\ref{Lemma Presta 2}} ({\bf Lemma Presta II.}) Given $(B \; | \; A)$ and $(B' \; | \; A')$ in $L'_n (R),$ 
$$(B \; | \; A) \leq_n (B' \; | \; A') \;\;\; \mbox{if and only if} \;\;\; (B \; | \; A) \models_n (B' \; | \; A').$$
\bigskip

Seen in this light, the Rules of Divisibility declare the relationships between spaces of nonhomogeneous systems of linear equations. For example, RoD (2) may be seen as the statement that for every left $R$-module ${_R}M,$  
$$\bigcup_{\bb \in BV \cdot M^{k'}} \Sol_M (A\bv \doteq \bb) \subseteq 
\bigcup_{\bc \in B \cdot M^{k}} \Sol_M (A\bv \doteq \bc).$$

Suppose that $A$ and $A'$ are matrices, both with $n$ columns, with the property that for every left $R$-module ${_R}M,$ the two solution subgroups of $({_R}M)^n$ of the two respective homogeneous systems of linear equations are equal. This is expressible in $\Lang (R)$ by 
$$T(R) \models \forall \bv \; [(A\bv \doteq \zero) \leftrightarrow (A'\bv \doteq \zero)].$$
Equivalently, $(A) \models_n (A')$ and $(A') \models_n (A).$ By Lemma Presta II, the two systems $(A)$ and $(A')$ are equivalent in $L'_n (R).$\bigskip

The first section of the article is devoted to a general exposition of the formal matrix calculus; the second section of the article applies the formal matrix calculus to obtain various presentations of the Grothendieck group 
$K_0 (\Rmod, \oplus)$ of the category $\Rmod$ of finitely presented left $R$-modules. One of these presentations suggests a homological interpretation, and so a complex $C_* (R)$ is introduced in the third section whose $0$-dimensional homology $H_0 (R)$ is naturally isomorphic (Theorem~\ref{0 homo}) to the Grothendieck group 
$K_0 (\Rmod, \oplus).$ If $R = k$ is a field, then the $1$-dimensional homology $H_1 (k)$ is isomorphic (Corollary~\ref{1-dim homo}) to $(k^{\times})_{\ab}/\{ \pm 1 \},$ where $(k^{\times})_{\ab}$ denotes the abelianization of the multiplicitave group of $k.$ The last section describes some representations of the partial orders $L_n (R).$ These representation offer concrete examples, mostly coming from the model theory of models, of isomorphic partial orders, which also serve as a historical reference for the formal matrix calculus they inspire. 

Throughout the article, $R$ will denote an associative ring with identity $1.$ The $n \times n$ identity matrix will be denoted by $I_n.$ At first (in most of section \S 1.1) the dimensions of zero matrices are specified; the $m \times n$ zero matrix is denoted by ${_m}\zero_n,$ but then these cumbersome subscripts are dropped to denote any zero matrix by $\zero,$ except the $1 \times 1,$ which is denoted $0.$ If $A$ is an $m \times n$ matrix, then ${_i}A$ will denote the $i$-th row of $A$ and $A_j$ the $j$-th column. When useful, the matrix $A$ is expressed as a row of column matrices
$A = (A_1, A_2, \ldots, A_n).$ If the matrix $A$ is $m \times (n + 1),$ the columns are indexed so that
$A = (A_0, A_1, \ldots, A_{n+1}).$

A great debt is owing to Puninsky, who showed me how to manipulate matrices using Lemma Presta. The formal matrix calculus presented here was first introduced at a lecture in the Durham Symposium, ''New Directions in the Model Theory of Fields,'' July 2009. I am grateful to M.\ Makkai and Ph.\ Rothmaler for their constructive feedback. 

\section{The Formal Calculus of Matrix Pairs}

\noindent This section is devoted to a development of the formal matrix calculus defined in the Introduction. Let us inspect more closely the definition of the relation $\leq_n$ on $L'_n (R).$ Recall that the elements $(B \; | \; A)$ of 
$L'_n (R)$ are given by matrices $A$ and $B$ (with entries in $R$) that have the same number of columns. The number of columns in $A$ is $n,$ but neither dimension of $B$ is specified. To better keep track of the computations, we will assign dimensions so that $B$ is an $m \times k$ matrix. In that case, the dimensions of $A$ are $m \times n.$ We are given that
$$(B \; |\; A) \leq_n (B' \; |\; A')$$
if and only if there exist matrices $U,$ $V$ and $G,$ of appropriate size, such that 
\begin{equation} \label{Prest relations}
UB = B'V \;\;\; \mbox{and} \;\;\; UA = A' + B'G.
\end{equation}
Let $B'$ be an $m' \times k'$ matrix. Then we see that $U$ is an $m' \times m$ matrix, $V$ a $k' \times k$ matrix, and
$G$ a $k' \times n$ matrix. 

\subsection{Preliminary Observations.}

\noindent Let us begin by verifying some of the claims made in the Introduction. When it is clear from the context, we will drop the subscript in the notation $\leq_n.$

\begin{proposition} \label{transitive}
The relation $\leq$ on $L'_n (R)$ is a pre-order (reflexive and transitive). 
\end{proposition}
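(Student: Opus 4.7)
The proof is a direct verification from the defining conditions~(\ref{Prest relations}) of $\leq_n$, so the plan is simply to exhibit the required witness matrices in each of the two cases. I anticipate no real obstacle: the computation is formal, and the main point is just to keep track of which matrices play which role and to confirm that the dimensions match.

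For reflexivity, given $(B \; | \; A) \in L'_n (R)$ with $B$ of size $m \times k$ and $A$ of size $m \times n$, I would take $U = I_m$, $V = I_k$, and $G$ equal to the $k \times n$ zero matrix. Then $UB = B = BV$ and $UA = A = A + BG$, so the conditions of~(\ref{Prest relations}) are satisfied (with $B' = B$, $A' = A$).

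For transitivity, suppose $(B \; | \; A) \leq_n (B' \; | \; A')$ and $(B' \; | \; A') \leq_n (B'' \; | \; A'')$. Then there exist $U, V, G$ with
\[
UB = B'V \quad \text{and} \quad UA = A' + B'G,
\]
and there exist $U', V', G'$ with
\[
U'B' = B''V' \quad \text{and} \quad U'A' = A'' + B''G'.
\]
The plan is to set $U'' := U'U$, $V'' := V'V$, and $G'' := G' + V'G$, and to verify directly that these witness $(B \; | \; A) \leq_n (B'' \; | \; A'')$. Indeed,
\[
U''B = U'(UB) = U'B'V = B''V'V = B''V'',
\]
and
\[
U''A = U'(UA) = U'A' + U'B'G = A'' + B''G' + B''V'G = A'' + B''G''.
\]
Checking that the dimensions of $U''$, $V''$, and $G''$ are appropriate (for instance, $G''$ has the correct size because both $G'$ and $V'G$ are $k'' \times n$, where $B''$ has $k''$ columns) completes the argument.
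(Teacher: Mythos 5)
Your proof is correct and follows essentially the same route as the paper: exhibit identity matrices and a zero matrix for reflexivity, and compose the witnesses as $U''=U'U$, $V''=V'V$, $G''=G'+V'G$ for transitivity. (You even get the size of $V$ right as $I_k$ for reflexivity, where the paper has a small typo writing $I_n$.)
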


\begin{proof}
The reflexive property $(B \; | \; A) \leq (B \; | \; A)$ is established by letting $U = I_m,$ $V = I_n,$ and 
$G = {_k}\zero_n.$ To verify transitivity, suppose that $(B_1 \; | \; A_1) \leq (B_2 \; | \; A_2)$ and \linebreak
$(B_2 \; | \; A_2) \leq (B_3 \; | \; A_3),$ with corresponding matrices $U_i,$ $V_i$ and $G_i,$ $i = 1,2.$ Thus we have
$U_1B_1 = B_2V_1$ and $U_2B_2 = B_3V_2,$ which implies
$$(U_2U_1)B_1 = U_2B_2V_1 = B_2(V_2V_1).$$
Let $U = U_2 U_1$ and $V = V_2 V_1.$ Then
$$UA_1 = U_2 U_1 A_1 = U_2 (A_2 + B_2G_1) = U_2A_2 + U_2B_2G_1 = A_3 + B_3 G_2 + B_3V_2G_1 = A_3 + B_3 G,$$
where $G = G_2 + V_2G_1.$ 
\end{proof}

Let us isolate the r\^{o}les played by each of the three parameters $U,$ $V$ and $G$ in the definition of the pre-order on $L'_n (R)$ (cf.\ the comments preceding Lemma 1.1.13 of~\cite{Mike's book}).

\begin{theorem} \label{rules} 
Let $(B \; | \; A) \in L'_n (R),$ where $B$ is an $m \times k$ matrix.
\begin{enumerate}
\item If $U$ is a matrix with $m$ columns, then 
$(B \; | \; A) \leq_n (UB \; | \; UA).$ 
\item If $V$ is a matrix with $k$ rows, then 
$(BV \; | \; A) \leq_n (B \; | \; A).$
\item If $G$ is a $k \times n$ matrix, then 
$(B \; | \; A + BG) \leq_n (A \; | \; B).$
\end{enumerate}
The relation $\leq_n$ is the least pre-order on $L'_n (R)$ satisfying (1), (2) and (3).
\end{theorem}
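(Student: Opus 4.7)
The plan is to verify parts (1)--(3) directly from the definition and then show the second assertion (minimality) by decomposing an arbitrary witness triple $(U,V,G)$ into three successive applications of the rules, joined by transitivity (Proposition~\ref{transitive}).

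For (1), with $B'=UB$ and $A'=UA$, I would produce the witness $(U_0,V_0,G_0) = (U, I_k, {_k}\zero_n)$: then $U_0 B = UB = (UB)\cdot I_k = B' V_0$ and $U_0 A = UA = UA + UB\cdot\zero = A' + B' G_0$, where dimensions match because $U$ has $m$ columns by hypothesis. For (2), with $B'=B,A'=A$ and source $(BV\mid A)$, take $(U_0,V_0,G_0) = (I_m, V, \zero)$; the hypothesis that $V$ has $k$ rows is exactly what makes $I_m(BV) = B\cdot V$ well-formed. For (3), take $(U_0,V_0,G_0) = (I_m, I_k, G)$; then $I_m B = B I_k$ and $I_m(A+BG) = A + BG$, and the requirement that $G$ be $k\times n$ is precisely the shape needed for the product $BG$ to make sense and to have the same dimensions as $A$.

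For the minimality claim, let $\preceq$ be any pre-order on $L'_n(R)$ that satisfies (1)--(3). Suppose $(B\mid A)\leq_n (B'\mid A')$, witnessed by matrices $U,V,G$ with $UB = B'V$ and $UA = A'+B'G$. I would chain the three rules as follows: by (1), $(B\mid A)\preceq (UB\mid UA)$; substituting the identities $UB=B'V$ and $UA=A'+B'G$, this pair is literally equal to $(B'V\mid A'+B'G)$; by (2), $(B'V\mid A'+B'G)\preceq (B'\mid A'+B'G)$; and by (3), $(B'\mid A'+B'G)\preceq (B'\mid A')$. Transitivity of $\preceq$ then gives $(B\mid A)\preceq (B'\mid A')$, so $\leq_n\;\subseteq\;\preceq$.

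There is no real obstacle; the only point worth care is the bookkeeping of matrix dimensions (and in particular checking that the witnesses $(U_0,V_0,G_0)$ proposed above conform to the size conventions set in the paragraph preceding the theorem, namely $U_0$ has as many rows as $B'$ has rows and so on). The structural insight is simply that the definition of $\leq_n$ factors cleanly into ``apply $U$ on the left,'' ``absorb $V$ into the divisor,'' and ``add a $B$-multiple to $A$,'' which is exactly what (1), (2), (3) encode.
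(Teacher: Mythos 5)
Your proposal is correct and follows essentially the same route as the paper: the same witness triples $(U,I_k,\zero)$, $(I_m,V,\zero)$, $(I_m,I_k,G)$ for parts (1)--(3), and the same three-step chain $(B\mid A)\preceq(UB\mid UA)=(B'V\mid A'+B'G)\preceq(B'\mid A'+B'G)\preceq(B'\mid A')$ for minimality. (You also silently correct the typo $(A\mid B)$ in the statement of part (3), which should read $(B\mid A)$, as the paper's own proof confirms.)
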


\begin{proof}
For (1), let $V = I_k$ and $G = {_k}\zero_n;$ for (2), let $U = I_m$ and $G = {_k}\zero_n;$ and for (3), let $U = I_m$ and $V = I_k.$ Suppose that $\prec$ is a partial order on $L'_n (R)$ satisfying (1), (2) and (3). If 
$(B \; | \; A) \leq (B' \; | \; A')$ and $U,$ $V,$ and $G$ satisfy Equations~\ref{Prest relations}, then
$$(B \; | \; A) \prec (UB \; | \; UA) = (B'V \; | \; A'+B'G) \prec (B' \; | \; A'+B'G) \prec (B' \; | \; A').$$
\end{proof}

In the sequel, the three statements of Theorem~\ref{rules} are referred to as the Rules of Divisibility (RoD) (1)-(3).
The partial order $\leq$ imposes on $L'_n (R)$ the equivalence relation 
$(B \; | \; A) \approx_n (B' \; | \; A')$ defined to hold provided $(B \; | \; A) \leq_n (B' \; | \; A')$ and 
$(B \; | \; A) \leq_n (B' \; | \; A').$ The equivalence class of $(B \; | \; A)$ will be denoted by 
$[B \; | \; A].$\bigskip

\noindent {\bf Definition.} An {\em $n$-ary matrix pair} is an equivalence class $[B \; | \; A]$ of some element 
$(B \; | \; A)$ of $L'_n (R)$ modulo the relation $\approx_n$. The set of $n$-ary matrix pairs is denoted by $L_n (R).$ The partial order $\leq_n$ on $L'_n (R)$ induces a partial order (reflexive, symmetric and transitive) on $L_n (R),$ which will also be denoted $\leq_n,$ (or $\leq,$ when there is no danger of confusion.) \bigskip

The next corollary indicates those situations in which the Rules of Divisibility yield equality.

\begin{corollary} \label{invertible}
Let $[B \; | \; A] \in L_n (R),$ where $B$ is an $m \times k$ matrix. 
\begin{enumerate}
\item If $P$ is an invertible $m \times m$ matrix, then 
$[B \; | \; A] = [PB \; | \; PA].$
\item If $Q$ is an invertible $k \times k$ matrix, then
$[B \; | \; A] = [BQ \; | \; A].$
\item If $G$ is a $k \times n$ matrix, then $[B \; | \; A + BG] = [B \; | \; A].$
\end{enumerate}
\end{corollary}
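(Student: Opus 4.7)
The plan is to derive each of the three equalities by applying the corresponding Rule of Divisibility from Theorem~\ref{rules} twice, once in each direction, using the invertibility hypothesis (or the ability to negate $G$) to produce the reverse inequality.

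For part (1), I would first apply RoD~(1) with the matrix $U = P$ (which has $m$ columns, since $P$ is $m \times m$) to obtain $(B \; | \; A) \leq_n (PB \; | \; PA)$. For the reverse inequality, I would apply RoD~(1) to the pair $(PB \; | \; PA)$ using the matrix $U' = P^{-1}$, which has $m$ columns (matching the row count of $PB$), yielding $(PB \; | \; PA) \leq_n (P^{-1}PB \; | \; P^{-1}PA) = (B \; | \; A)$. Passing to equivalence classes gives $[B \; | \; A] = [PB \; | \; PA]$.

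Part (2) is symmetric. I would apply RoD~(2) with $V = Q$ (which has $k$ rows, since $Q$ is $k \times k$) to get $(BQ \; | \; A) \leq_n (B \; | \; A)$. For the reverse, I would write $B = (BQ)Q^{-1}$ and apply RoD~(2) to $(BQ \; | \; A)$ with the matrix $V' = Q^{-1}$ (which has $k$ rows, matching the column count of $BQ$), giving $(BQ \cdot Q^{-1} \; | \; A) = (B \; | \; A) \leq_n (BQ \; | \; A)$. Again, passing to equivalence classes yields the desired equality.

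Part (3) follows from RoD~(3) applied twice: once with $G$ to obtain $(B \; | \; A + BG) \leq_n (B \; | \; A)$, and once with $-G$ applied to the pair $(B \; | \; A + BG)$ to obtain $(B \; | \; A + BG + B(-G)) = (B \; | \; A) \leq_n (B \; | \; A + BG)$.

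There is no real obstacle here; the only care required is to verify in each case that the dimensions of the ``inverse'' matrix are still correct for the rule to apply, which follows immediately from the square shape of $P$ and $Q$ and from the fact that RoD~(3) places no invertibility constraint on $G$.
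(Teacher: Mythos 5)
Your proof is correct and follows exactly the paper's argument: each equality is obtained by applying the relevant Rule of Divisibility once in the forward direction and once with $P^{-1},$ $Q^{-1},$ or $-G$ to reverse the inequality. The extra dimension checks you include are fine but not needed beyond what the paper already records.
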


\begin{proof} (1). By RoD (1), $[B \; | \; A] \leq [PB \; | \; PA].$ Replacing $P$ by $P^{-1}$ yields $[PB \; | \; PA] \leq [B \; | \; A].$ \smallskip

\noindent (2). By RoD (2), $[BQ \; | \; A] \leq [B \; | \; A].$ Replacing $Q$ by $Q^{-1}$ yields $[B \; | \; A] \leq [BQ \; | \; A].$ \smallskip

\noindent (3). By RoD (3), $[B \; | \; A + BG] \leq [B \; | \; A].$ Replacing $G$ by $-G$ yields $[B \; | \; A] \leq [B \; | \; A + BG].$
\end{proof}

If we take the matrices $P$ or $Q$ in Corollary~\ref{invertible} to be permutation matrices, we see that permutating (simultaneously) the rows of $A$ and $B,$ or the columns of $B,$ does not change the matrix pair. Similarly, if $P$ or $Q$ are taken to be elementary matrices, then Corollary~\ref{invertible} implies that we may perform elementary row operations (simultaneously) on $A$ and $B,$ or elementary column operations on $B,$ without changing the matrix pair. The next proposition describes the affect of adding rows to an $n$-ary matrix pair. 

\begin{lemma} \label{more rows}
Let $[B \; | \; A]$ and $[B' \; | \; A']$ belong to $L_n (R),$ with $B$ an $m \times k$ matrix, and $B'$ an $m' \times k$ matrix. Then
$$\left[ \begin{array}{c|c}
B & A \\
B' & A' \end{array} \right]  \leq [B \; | \; A].$$
If $A'$ and $B'$ are zero matrices, then equality holds.
\end{lemma}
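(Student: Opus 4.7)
The plan is to work directly from the definition of $\leq_n$ given in the Introduction (and restated via Equations~(\ref{Prest relations})), rather than chasing through the Rules of Divisibility. Both directions reduce to exhibiting appropriate ``projection'' and ``inclusion'' matrices built from an identity block and a zero block.

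For the inequality $\left[\begin{smallmatrix} B & A \\ B' & A' \end{smallmatrix}\right] \leq [B\;|\;A]$, I set the data as $B_1 = \left(\begin{smallmatrix} B \\ B' \end{smallmatrix}\right)$, $A_1 = \left(\begin{smallmatrix} A \\ A' \end{smallmatrix}\right)$, $B_2 = B$, $A_2 = A$, and look for a triple $(U,V,G)$ of appropriate sizes satisfying $UB_1 = B_2 V$ and $UA_1 = A_2 + B_2 G$. The natural candidate is the ``row projection'' $U = (I_m \;|\; 0_{m\times m'})$ together with $V = I_k$ and $G = 0$; a direct multiplication check verifies both equations.

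For the equality claim, when $A' = \mathbf{0}$ and $B' = \mathbf{0}$, I need only the reverse inequality, and here the cleanest route is to invoke RoD~(1) with the ``row inclusion'' matrix $U = \left(\begin{smallmatrix} I_m \\ 0_{m'\times m} \end{smallmatrix}\right)$, which has $m$ columns; then $UB = \left(\begin{smallmatrix} B \\ \mathbf{0}\end{smallmatrix}\right)$ and $UA = \left(\begin{smallmatrix} A \\ \mathbf{0}\end{smallmatrix}\right)$ are precisely the blocks of the stacked pair, giving $[B\;|\;A] \leq \left[\begin{smallmatrix} B & A \\ \mathbf{0} & \mathbf{0} \end{smallmatrix}\right]$.

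There is no real obstacle here; the only thing to be careful about is bookkeeping the dimensions of $U$, $V$, $G$ relative to the convention in~(\ref{Prest relations}), since $U$ must have $m + m'$ columns in the first step and $m$ columns in the second step. As a sanity check, one can observe (via Lemma Presta~II) that at the semantic level the stacked pair cuts out $\{\mathbf{a} : \exists \mathbf{w},\; A\mathbf{a} = B\mathbf{w} \text{ and } A'\mathbf{a} = B'\mathbf{w}\}$, a subgroup of $\{\mathbf{a} : \exists \mathbf{w},\; A\mathbf{a} = B\mathbf{w}\}$, with equality exactly when the second constraint is vacuous; this matches both assertions of the lemma.
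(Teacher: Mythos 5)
Your proof is correct and follows essentially the same route as the paper: the paper also obtains the first inequality from RoD (1) with $U = (I_m, \zero)$ (which amounts to your direct verification with $V = I_k$, $G = \zero$) and the reverse inequality from RoD (1) with the column block matrix $U$ built from $I_m$ over a zero block. The dimension bookkeeping and the choice of matrices match the paper's proof exactly.
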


\begin{proof}
For the first part, apply RoD (1) with $U = (I_m, {_m}\zero_{m'}).$ This yields the inequality in $L'_n (R),$ and therefore in $L_n (R).$ If $A'$ and $B'$ are zero matrices, then the inequality in the opposite direction also follows using RoD (1), but with ${\dis U = \left( \begin{array}{c} I_m \\ {_{m'}}\zero_m \end{array} \right).}$
\end{proof}

If an $n$-ary matrix pair has an extra row of zeros, then that row may be removed, without changing the matrix pair. For, we may permute the rows of the matrix pair to put the zero row at the bottom, and then apply Lemma~\ref{more rows} to remove it. The next proposition describes the situation when columns are added to the matrix $B.$ 

\begin{lemma} \label{more columns}
Let $[B \; | \; A] \in L_n,$ with $B$ an $m \times k$ matrix. If $B'$ is an $m \times k'$ matrix, then  
$$[B \; | \; A] \leq [B, B' \; | \; A].$$ If $B'$ is the zero matrix, then equality holds.
\end{lemma}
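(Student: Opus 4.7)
The plan is to derive both parts from RoD~(2) alone, which governs right-multiplication of the left-hand matrix. For the inequality $[B\;|\;A] \leq [B, B' \;|\; A]$, I would exhibit $B$ as a right-multiple of $(B, B')$. Taking $V$ to be the $(k+k') \times k$ block matrix $\left( \begin{array}{c} I_k \\ \zero \end{array} \right)$, a direct block computation gives $(B, B')V = B$. Applying RoD~(2) to the pair $(B, B' \;|\; A)$ with this choice of $V$ then yields $(B \;|\; A) = ((B, B')V \;|\; A) \leq_n (B, B' \;|\; A)$, and this inequality passes to the quotient $L_n(R)$.

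For the equality assertion when $B' = \zero$, what remains is the reverse inequality $[B, \zero \;|\; A] \leq [B \;|\; A]$. This is again a single application of RoD~(2), now to the pair $(B \;|\; A)$ itself, with $V$ taken to be the $k \times (k+k')$ block matrix $(I_k, \zero)$. Since $BV = (B, \zero)$, RoD~(2) delivers $(B, \zero \;|\; A) = (BV \;|\; A) \leq_n (B \;|\; A)$.

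There is no substantive obstacle here: both inequalities emerge from judicious block-matrix choices for the parameter $V$ in RoD~(2), with no recourse to RoDs~(1) or~(3). Alternatively, one could unwind the definition of $\leq_n$ directly, taking $U = I_m$ and $G = \zero$ so that only $V$ carries the content; but the RoD-based formulation is cleaner and exposes the visible symmetry between the two complementary block choices $\left( \begin{array}{c} I_k \\ \zero \end{array} \right)$ and $(I_k, \zero)$ that power the respective directions. Note too the contrast with Lemma~\ref{more rows}: there, extra rows shrink the pair (RoD~(1)), whereas here, extra columns of the left matrix $B$ enlarge it (RoD~(2)), which is exactly what one should expect from the asymmetric roles of the two sides in $(B \;|\; A)$.
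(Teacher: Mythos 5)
Your proof is correct and is essentially identical to the paper's: both directions follow from RoD~(2) with exactly the block choices $\left( \begin{array}{c} I_k \\ \zero \end{array} \right)$ and $(I_k, \zero)$ that the paper uses. No further comment is needed.
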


\begin{proof} The first assertion follows from RoD (2) with 
${\dis V = \left( \begin{array}{c}
I_k \\ {_{k'}}\zero_k \end{array} \right).}$ 
If $B'$ is a zero matrix, then the inequality in the opposite direction also follows using RoD (2), but with 
$V = (I_k, {_k}\zero_{k'}).$
\end{proof}

Lemma~\ref{more columns} implies that an extra column of zeros may be removed from the left matrix $B$ without changing the $n$-ary matrix pair. The next proposition shows how to compute the infimum of two $n$-ary matrix pairs.

\begin{proposition} \label{infimum}
The infimum of two elements $[B \; | \; A]$ and $[B' \; | \; A']$ of $L_n (R)$ is given by 
$$[B \; | \; A] \wedge [B' \; | \; A'] := 
\left[ \begin{array}{cc|c}
B & \zero & A \\ \zero & B' & A' \end{array} \right].$$
\end{proposition}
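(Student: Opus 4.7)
The plan is to verify the two defining properties of an infimum in turn: first that the candidate matrix pair lies below both $[B\;|\;A]$ and $[B'\;|\;A']$, and second that it dominates any common lower bound.

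For the lower bound assertion, I would exhibit the candidate as a block matrix pair obtained from $[B\;|\;A]$ by two operations, each of which is controlled by earlier results. Starting from $[B\;|\;A]$, Lemma~\ref{more columns} guarantees that adjoining a zero column to $B$ preserves the matrix pair, so $[B\;|\;A]=[B,\zero\;|\;A]$. Then Lemma~\ref{more rows} applied with the second row-block $(\zero,B'\;|\;A')$ gives
$$\left[ \begin{array}{cc|c} B & \zero & A \\ \zero & B' & A' \end{array} \right] \;\leq\; [B,\zero\;|\;A] \;=\; [B\;|\;A].$$
Swapping the two row-blocks (a simultaneous row permutation on the left and right matrices, which by Corollary~\ref{invertible}(1) does not change the pair) and then permuting columns of the left matrix (Corollary~\ref{invertible}(2)) reduces the analogous inequality with $[B'\;|\;A']$ to exactly the same argument.

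For the universal property, suppose $[C\;|\;D]\in L_n(R)$ satisfies $[C\;|\;D]\leq[B\;|\;A]$ and $[C\;|\;D]\leq[B'\;|\;A']$. Unpacking the definition, there exist matrices $U_1,V_1,G_1$ and $U_2,V_2,G_2$ of appropriate dimensions with
$$U_1C=BV_1,\quad U_1D=A+BG_1,\quad U_2C=B'V_2,\quad U_2D=A'+B'G_2.$$
I would then stack these witnesses into the block matrices
$$U=\begin{pmatrix} U_1\\ U_2\end{pmatrix},\qquad V=\begin{pmatrix} V_1\\ V_2\end{pmatrix},\qquad G=\begin{pmatrix} G_1\\ G_2\end{pmatrix},$$
and verify by direct block multiplication that
$$UC=\begin{pmatrix} B & \zero\\ \zero & B'\end{pmatrix}V\qquad\text{and}\qquad UD=\begin{pmatrix} A\\ A'\end{pmatrix}+\begin{pmatrix} B & \zero\\ \zero & B'\end{pmatrix}G,$$
which witnesses the desired inequality. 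The only potentially subtle point is the dimension bookkeeping so that $U$, $V$, $G$ have consistent sizes, but this is forced by the hypotheses: the stacking works because both $U_1,U_2$ act on the same $C$ (so they share the column count of $C$), and the two row-blocks of $V$ and $G$ match the column partitions of the block left-matrix and the two row-blocks of $D$.

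The only real work is the lower bound argument; the universal property is essentially a formal manipulation once the block structure is laid out. I expect no genuine obstacle beyond careful indexing of block sizes.
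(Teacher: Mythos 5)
Your proposal is correct and follows essentially the same route as the paper: the lower-bound half via Lemmata~\ref{more rows} and~\ref{more columns} (you merely make explicit the order in which they are combined, plus the permutation reduction for the second inequality), and the universal property via stacking the two witness triples into block matrices exactly as the paper does.
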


\begin{proof}
By Lemmata~\ref{more rows} and~\ref{more columns}, it is clear that 
$[B \; | \; A] \wedge [B' \; | \; A'] \leq [B \; | \; A]$ and 
$[B \; | \; A] \wedge [B' \; | \; A'] \leq [B' \; | \; A'].$ 
So suppose that $(B'' \; | \; A'') \leq (B \; | \; A)$ and $(B'' \; | \; A'') \leq (B' \; | \; A')$ in $L'_n (R).$ 
There are $U,$ $V$ and $G$ such that $UB'' = BV$ and $UA'' = A + BG;$ and there are $U',$ $V'$ and $G'$ such that $U'B'' = B'V'$ and $U'A'' = A' + B'G'.$ To see that 
$(B'' \; | \; A'') \leq (B \; | \; A) \wedge (B' \; | \; A'),$ just note that
$$\left( \begin{array}{c} U \\ U' \end{array} \right) B'' =
\left( \begin{array}{c} BV \\ B'V' \end{array} \right) =
\left( \begin{array}{ll} B & \zero \\ \zero & B' \end{array} \right)
\left( \begin{array}{c} V \\ V' \end{array} \right)$$
and 
$$\left( \begin{array}{c} U \\ U' \end{array} \right) A'' =
\left( \begin{array}{c} UA'' \\ U'A'' \end{array} \right) =
\left( \begin{array}{c} A + BG \\ A' + B'G' \end{array} \right) =
\left( \begin{array}{c} A \\ A' \end{array} \right) +
\left( \begin{array}{ll} B & \zero \\ \zero & B' \end{array} \right)
\left( \begin{array}{c} G \\ G' \end{array} \right).$$
\end{proof}

\begin{example} If $D$ is a diagonal matrix with diagonal entries $d_{ii},$ then Proposition~\ref{infimum} implies that 
$$[D \; | \; A] = \bigwedge_i [d_{ii} \; | {_i}A],$$
where ${_i}A$ denotes the $i$-th row of $A.$ This is relevant, for example, in the case when $R$ is a commutative PID.
By Theorem 3.8 of~\cite{Jac}, the $m \times k$ matrix $B$ may be {\em diagonalized} in the sense that there are invertible matrices $P$ and $Q$ such that $PBQ = D$ is a diagonal matrix. Applying RoD (1) and (2) with the invertible matrices $P$ and $Q,$ respectively, we see that for any $n$-ary matrix pair,
$$[B \; | \; A] = [PB \; | \; PA] = [PAQ \; | \; PA] = [D \; | \; PA]$$
is the infimum of $n$-ary matrix pairs $[d_{ii} \; | \; {_i}(PA)],$ where the matrix on the left is a $1 \times 1$ scalar matrix. 
\end{example}

The partial order $L_n (R)$ has a minimum element $0_n.$ For if $[B \; | \; A] \in L_n$ with $B$ an $m \times k$ matrix, then
$$[{_n}\zero_1 \; | \; I_n] \leq [{_m}\zero_1 \; | \; A] = [B \cdot {_k}\zero_1 \; | \; A] \leq [B \; | \; A].$$ 
The first inequality follows by RoD (1) with $U = A;$ the second by RoD (2) with $V ={_k}\zero_1.$ 

\begin{proposition} \label{minimum}
An element $[B \; | \; A] \in L_n (R)$ is the minimum element $0_n$ if and only if there exists a matrix $U$ such that
$UB = {_n}\zero_k$ and $UA = I_n.$
\end{proposition}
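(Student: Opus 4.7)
The plan is to unpack the definition of $\leq_n$ against the characterization of the minimum element $0_n = [{_n}\zero_1 \; | \; I_n]$ established in the paragraph preceding the proposition. For the ``only if'' direction, note that $[B \; | \; A] = 0_n$ carries the nontrivial content $[B \; | \; A] \leq_n [{_n}\zero_1 \; | \; I_n]$ (the opposite inequality being automatic, since $0_n$ is the minimum). By the definition of $\leq_n$ in $L'_n (R)$, there exist matrices $U$, $V$, $G$ of the appropriate sizes with $UB = {_n}\zero_1 \cdot V$ and $UA = I_n + {_n}\zero_1 \cdot G$. Both expressions on the right involving ${_n}\zero_1$ collapse to zero matrices, yielding $UB = \zero$ and $UA = I_n$.

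For the converse, given $U$ with $UB = \zero$ and $UA = I_n$, I apply RoD (1) (Theorem~\ref{rules}) to $(B \; | \; A)$ with this $U$ to get $[B \; | \; A] \leq_n [UB \; | \; UA] = [\zero \; | \; I_n]$. Combining with Lemma~\ref{more columns} (extra zero columns on the left do not change the matrix pair), $[\zero \; | \; I_n] = [{_n}\zero_1 \; | \; I_n] = 0_n$, and the inequality $0_n \leq_n [B \; | \; A]$ is automatic. Hence $[B \; | \; A] = 0_n$.

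There is no real obstacle here; the proposition is essentially a restatement of ``$[B\;|\;A] \leq 0_n$'' with its existential quantifier made explicit, with the zero matrices in $0_n$ making the $V$ and $G$ parameters irrelevant. The only book-keeping is matching dimensions: with $B$ of size $m \times k$ and the primed pair $(B', A') = ({_n}\zero_1, I_n)$ of sizes $n \times 1$ and $n \times n$, the parameter $U$ comes out $n \times m$, which is exactly the shape required for the conditions $UB = \zero$ and $UA = I_n$ to make sense.
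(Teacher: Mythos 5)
Your proof is correct and follows essentially the same route as the paper's: one direction unpacks the definition of $\leq_n$ against $({_n}\zero_1 \; | \; I_n)$ so that the $V$ and $G$ terms vanish, and the other applies RoD (1) with the given $U$ and identifies $[\zero \; | \; I_n]$ with $0_n$ via Lemma~\ref{more columns}. The dimension bookkeeping at the end is also consistent with the paper's conventions.
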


\begin{proof}
If $[B \; | \; A]$ satisfies the condition, then $[B \; | \; A] \leq [{_n}\zero_k \; | I_n],$ by (1) of Theorem~\ref{rules} with $U$ as given. But $[{_n}\zero_k \; | I_n] = 0_n,$ by Lemma~\ref{more columns}.

For the converse, suppose that $(B \; | \; A) \leq ({_n}\zero_1 \; | \; I_n).$ Then there exist matrices $U,$ $V,$ and $G$ such that $UB = {_n}\zero_1 \cdot V = {_n}\zero_k$ and $UA = I_n + {_n}\zero_1 \cdot G = I_n.$
\end{proof}

The partial order $L_n (R)$ also has a maximum element $1_n.$ For if $[B \; | \; A] \in L_n$ with $B$ an $m \times k$ matrix, then
$$[B \; | \; A] = [I_m \cdot B \; | \; A] \leq [I_m \; | \; A].$$
This matrix pair $[I_m \; | \; A]$ will be seen to be the maximum element once it is put into a form independent of the matrix $A$ and dimension $m.$ By RoD (3) with $G = -A,$  
$$[I_m \; | A] = [I_m \; | \; {_m}\zero_n] = \bigwedge [1 \; | \; {_1}\zero_n] = [1 \; | \; {_1}\zero_n].$$
The second equality follows from Proposition~\ref{infimum} and Lemma~\ref{more columns}. The maximum element is therefore given by $$1_n = [1 \; | \; {_1}\zero_n].$$
There are other forms $[B \; | \; A]$ that represent the maximum element. For example, if $B$ is any $m \times k$ matrix, then $1_n  = [I_k \; | \; {_k}\zero_n] \leq [B \; | \; {_m}\zero_n],$ which is obtained using RoD (1) with $U = B.$ More generally, we have the following.  

\begin{proposition} \label{maximum}
An element $[B \; | \; A] \in L_n (R)$ is the maximum element $1_n$ if and only if there exists a matrix $W$ such that
$BW = A.$
\end{proposition}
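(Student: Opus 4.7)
The proposition has two directions, and both follow directly from the Rules of Divisibility together with the observation, made just before the statement, that $[B \;|\; {_m}\zero_n] = 1_n$ for any matrix $B$.

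For the ``if'' direction, suppose $BW = A$ for some matrix $W$. I would apply Corollary~\ref{invertible}(3) with the zero matrix in place of $A$ and with $G = W$, obtaining
\[
[B \;|\; A] \;=\; [B \;|\; BW] \;=\; [B \;|\; {_m}\zero_n + BW] \;=\; [B \;|\; {_m}\zero_n].
\]
The right-hand side equals $1_n$ by the discussion immediately preceding the proposition (the inequality $1_n = [I_k \;|\; {_k}\zero_n] \leq [B \;|\; {_m}\zero_n]$ comes from RoD (1) applied with $U = B$, and the reverse inequality is automatic since $1_n$ is the maximum).

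For the ``only if'' direction, suppose $[B \;|\; A] = 1_n = [1 \;|\; {_1}\zero_n]$; in particular $1_n \leq [B \;|\; A]$ as elements of $L_n(R)$, so $(1 \;|\; {_1}\zero_n) \leq_n (B \;|\; A)$ in $L'_n(R)$. Unwinding the definition, this produces matrices $U$, $V$ and $G$ of appropriate size satisfying
\[
U \cdot 1 \;=\; BV \qquad \text{and} \qquad U \cdot {_1}\zero_n \;=\; A + BG.
\]
The left-hand equations in the product simplify to $U = BV$ and $\zero = A + BG$, so $A = -BG = B(-G)$, and the matrix $W := -G$ is the one we seek. This mirrors exactly the structure of the argument already given for Proposition~\ref{minimum}.

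There is no serious obstacle: the ``if'' direction is a single application of RoD (3) combined with a known evaluation of $1_n$, and the ``only if'' direction is a mechanical unpacking of the definition of $\leq_n$ against the particularly simple representative $(1 \;|\; {_1}\zero_n)$ of $1_n$. The only point requiring a moment's care is that the appropriate dimensions line up in the unpacked equations (here $U$ turns out to be an $m \times 1$ column vector and $G$ a $k \times n$ matrix), which is automatic from the shape conventions.
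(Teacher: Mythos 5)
Your proof is correct and follows essentially the same route as the paper: the ``if'' direction is RoD (3) (equivalently Corollary~\ref{invertible}(3)) applied to $[B \;|\; BW]$ together with the identification $[B \;|\; {_m}\zero_n] = 1_n$, and the ``only if'' direction unpacks $(1 \;|\; {_1}\zero_n) \leq_n (B \;|\; A)$ to get $A = B(-G)$, exactly as in the paper. The dimension bookkeeping you note is also consistent with the paper's conventions.
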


\begin{proof}
Suppose that $A = BW$ for some $k \times n$ matrix $W.$ Then
$$[B \; | \; A] = [B \; | \; BW] = [B \; | \; {_m}\zero_n] = 1_n,$$
by RoD (3) with $G = - W.$ On the other hand, if $[B \; | \; A] = 1_n,$ then 
$(1 \; | \; {_1}\zero_n) \leq (B \; | \; A),$ so there exist matrices $U,$ $V,$ and $G$ such that $U \cdot 1 = BV$ and $U \cdot {_1}\zero_n = A + BG.$ Then $A = B(-G),$ and so $W = - G.$
\end{proof}

To aid our intuition, we may construe the symbol $(B \; | \; A)$ as the proposition ''$B$ divides $A$ on the left.'' 
This proposition is then assigned a position in the partial order $L_n (R),$  governed by the Rules of Divisibility in $L'_n (R).$ These rules correspond to the matrix operations that preserve the relation ''$B$ divides $A$ on the left.'' Proposition~\ref{maximum} asserts that the proposition $[B \; | \; A]$ is assigned the maximum value $1_n$ if and only if, $B|A$ is {\em true,} that is if $B$ {\em does,} in fact, divide $A$ on the left: $$[B \; | \; A] = 1_n  \mbox{  if and only if  }  B|A.$$

\subsection{Homogeneous Systems}

An element of $L'_n (R)$ is called a ({\em homogeneous}) {\em system} ({\em of linear equations in $n$ variables}) if it is of the form $(\zero \; | A);$ the {\em matrix of coefficients} of this system is $m \times n$ matrix $A.$ For simplicity, we will denote such a system by $(A) := (\zero \; | \; A),$ and its $n$-ary matrix pair by $[A] := [\zero \; | \; A].$ For example, the maximum element $1_n = [{_1}\zero_n]$ and minimum element $0_n = [I_n]$ are both matrix pairs of systems. The basic properties of systems are given in the following proposition.

\begin{proposition} \label{systems}
Let $A$ be an $m \times n$ matrix. Then the following hold:
\begin{enumerate}
\item if $A'$ is another matrix with $n$ columns, then $(A) \approx_n (A')$ in $L'_n (R)$ if and only if there exist matrices $U$ and $U'$ such that $UA = A'$ and $U'A' = A;$
\item $(A) = (B' \; | \; A')$ in $L'_n (R)$ if and only if there exist matrices $U,$ $W,$ and $G$ such that
$$UB' = \zero; \;\;\; UA' = A; \mbox{  and  } WA = A' + B'G;$$
\item suppose that $A'$ is an $m \times k$ matrix, with $(A, A') \leq_{n+k} (B \; | \; C', C'')$ in $L'_{n+k}(R),$ where $C'$ and $C''$ have $n$ (resp., $k$) columns. Then
$(A \; | \; A') \leq_k (B, C' \; | \; C'').$ 
\end{enumerate}
\end{proposition}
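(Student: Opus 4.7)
The plan is to unfold the definition~(\ref{Prest relations}) of $\leq_n$ for each of the three statements; in parts (1) and (2), the assumption that one pair is a system (so its left matrix is $\zero$) causes several of the existential quantifiers in the definition to collapse, while in (3) one exhibits explicit witness matrices built from the ones supplied by the hypothesis.

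For (1), unfolding $(A) \leq_n (A')$, i.e.\ $(\zero \; | \; A) \leq_n (\zero \; | \; A')$, asks for $U, V, G$ with $U\zero = \zero V$ (vacuous, take $V = \zero$) and $UA = A' + \zero G = A'$. So the inequality reduces to the existence of $U$ with $UA = A'$; applying this in both directions of $\approx_n$ yields (1). Conversely, given such $U$, RoD~(1) immediately supplies $(A) \leq_n (A')$.

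For (2), I split $(A) \approx_n (B' \; | \; A')$ into two inequalities. The direction $(B' \; | \; A') \leq_n (\zero \; | \; A)$ asks for $U, V, G$ with $UB' = \zero V = \zero$ and $UA' = A + \zero G = A$, which collapses to $UB' = \zero$ and $UA' = A$. The direction $(\zero \; | \; A) \leq_n (B' \; | \; A')$ asks for $W, V', G$ with $W\zero = B' V'$ (satisfied by $V' = \zero$) and $WA = A' + B'G$, which collapses to $WA = A' + B'G$. Together these produce precisely the three matrices $U$, $W$, $G$ in the statement, and each implication is reversible.

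For (3), unfolding $(\zero \; | \; A, A') \leq_{n+k} (B \; | \; C', C'')$ supplies $U$, $V$, $G$ with $BV = \zero$ (irrelevant) and $U(A, A') = (C', C'') + BG$. Partition $G = (G_1, G_2)$ into blocks of $n$ and $k$ columns respectively, so that $UA = C' + BG_1$ and $UA' = C'' + BG_2$. To verify $(A \; | \; A') \leq_k (B, C' \; | \; C'')$, set
$$U^* := U, \quad V^* := \left( \begin{array}{c} G_1 \\ I_n \end{array} \right), \quad G^* := \left( \begin{array}{c} G_2 \\ \zero \end{array} \right),$$
and compute $(B, C') V^* = BG_1 + C' = U^*A$ and $C'' + (B, C') G^* = C'' + BG_2 = U^*A'$, as required. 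The only bookkeeping point, and the closest thing to an obstacle, is that the block $C'$ has been moved from the right-hand matrix in the hypothesis to part of the left-hand matrix in the conclusion; this is precisely what forces the identity block $I_n$ to appear in $V^*$ and the zero block to appear in $G^*$.
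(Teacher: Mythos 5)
Your proposal is correct and follows the paper's own route: parts (1) and (2) are exactly the "unfold the definition, noting the zero left matrix trivializes the $UB=B'V$ condition" argument the paper leaves implicit, and for part (3) your witnesses $V^* = \left( \begin{array}{c} G_1 \\ I_n \end{array} \right)$ and $G^* = \left( \begin{array}{c} G_2 \\ \zero \end{array} \right)$ are precisely the matrices the paper exhibits.
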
 

\begin{proof}
Both ($1$) and ($2$) follow from the definition. To prove ($3$), suppose that we have matrices $U,$ $V,$ and 
$G = (G', G'')$ such that 
$$U \cdot \zero = BV \mbox{ and } U(A, A') = (C', C'') + B(G', G'').$$
The two equations $UA = C' + BG'$ and $UA' = C'' + BG''$ may be rewritten as
$$UA = (B, C') \left( \begin{array}{c} G' \\ I_n \end{array} \right) \mbox{  and  }  
UA' = C'' + (B,C') \left( \begin{array}{c} G'' \\ \zero \end{array} \right).$$
\end{proof}

\noindent If $A$ and $A'$ satisfy Condition (1) of Proposition~\ref{systems}, then it is immediate
that for every left $R$-module ${_R}M,$ the respective solution subgroups of $({_R}M)^n$ of the corresponding homogeneous systems of linear equations (in the classical sense) are equal.   

A matrix $B$ is called {\em regular} if there exists a matrix $C$ such that $BCB = B.$ 

\begin{theorem} \label{regular matrix}
The following are equivalent for an $m \times k$ matrix $B:$
\begin{enumerate}
\item it is regular;
\item for every $m \times n$ matrix $A,$ there exists a matrix $A'$ with $n$ columns such that 
$$[B \; | \; A] = [A'];$$
\item there exists a matrix $A'$ with $n$ columns such that $[B \; | \; I_n] = [A'].$
\end{enumerate}
\end{theorem}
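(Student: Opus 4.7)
The plan is to prove the cycle $(1) \Rightarrow (2) \Rightarrow (3) \Rightarrow (1)$, with the middle step being the trivial specialization $n = m$, $A = I_m$ (the pair $[B \; | \; I_n]$ only makes sense when $B$ has $n$ rows, so (3) is read under the identification $n = m$, hence $A'$ has $m$ columns).

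For $(1) \Rightarrow (2)$, given $BCB = B$ and an arbitrary $m \times n$ matrix $A$, I would propose the explicit candidate
$$A' := (I_m - BC)A.$$
The inequality $[B \; | \; A] \leq [\zero \; | \; A']$ follows from RoD~(1) with multiplier $U := I_m - BC$, since $UB = B - BCB = \zero$ and $UA = A - BCA = A'$. For the reverse inequality $[\zero \; | \; A'] \leq [B \; | \; A]$, I would appeal directly to the definition of $\leq_n$ using $U := I_m$, $V := \zero$, and $G := -CA$: these satisfy $U \cdot \zero = BV$ and $UA' = A' = A - BCA = A + BG$.

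The content lies in $(3) \Rightarrow (1)$. Assuming $[B \; | \; I_m] = [A']$, equivalently $(A') \approx_m (B \; | \; I_m)$ in $L'_m(R)$, I would apply Proposition~\ref{systems}(2), instantiating the Proposition's $A$, $B'$, and $A'$ by our $A'$, $B$, and $I_m$ respectively. This produces matrices $U$, $W$, $G$ with
$$UB = \zero, \qquad U I_m = A', \qquad W A' = I_m + BG.$$
The middle equation forces $U = A'$, so the first yields $A' B = \zero$. Right-multiplying the last equation by $B$ gives $W A' B = B + BGB$; the left-hand side vanishes because $A' B = \zero$, so $B = -BGB$, and $C := -G$ exhibits $B$ as regular.

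The main obstacle is this last implication: one must coax the global algebraic identity $BCB = B$ out of the apparently weak hypothesis that a single matrix pair $[B \; | \; I_m]$ can be represented as a system. The decisive trick is to read Proposition~\ref{systems}(2) as simultaneously providing a left annihilator of $B$ (namely $A'$) and an expression for $I_m$ modulo the left-column span of $B$ (namely $WA' - BG = I_m$); right-multiplication by $B$ then combines these into a factorization of $B$ through itself.
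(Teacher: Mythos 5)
Your proposal is correct and follows essentially the same route as the paper: for $(1)\Rightarrow(2)$ the paper also arrives at the witness $A' = (I_m - BC)A$ (it phrases the two inequalities through the idempotent $E = BC$ rather than exhibiting $U$, $V$, $G$ directly, but the content is identical), and your $(3)\Rightarrow(1)$ — extracting the left annihilator $A'B = \zero$ and the identity $WA' = I_m + BG$, then right-multiplying by $B$ to get $B = B(-G)B$ — is precisely the paper's argument. Your explicit handling of the dimensional identification $n = m$ in item (3) and the trivial $(2)\Rightarrow(3)$ specialization match what the paper leaves implicit.
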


\begin{proof}
(1) $\Rightarrow$ (2). Suppose that $B$ is regular. By RoD (2) with $V = CB$ and $V = C,$ respectively, 
Then $$[B \; | \; A] = [BCB \; | \; A] \leq [BC \; | \; A] \leq [B \; | \; A].$$ 
Then $(BC)^2 = (BC)(BC) = (BCB)C = BC$ and we may replace $B$ by the $n \times n$ idempotent matrix $E = BC$ in the $n$-ary matrix pair. Then
$$[E \; | \; A] \leq [ \zero \; | (I_n - E)A] \leq [E \; | \; A - EA] \leq [E \; | \; A].$$
The first inequality follows from RoD (1) with $U = I_n - E;$ the second from RoD (2) with $V = \zero;$ and the third
from RoD (3) with $G = A.$ Now let $A' = (I_n - E)A.$

(3) $\Rightarrow$ (1). We are given that $(B \; | \; I_n) \leq (\zero \; | \; A')$ and 
$(\zero \; | \; A') \leq (B \; | \; I_n).$ From the first inequality, we obtain a matrix $U$ such that $UB = \zero$ and $U = A.$ In short, $AB = \zero.$ From the second inequality, we obtain matrices $U'$ and $G'$ such that 
$U'A = I_n + BG'.$ Multiplying on the right by $B$ yields the equation $\zero = B + BG'B.$ Whence $B = BCB$ with 
$C = -G'.$
\end{proof}

The ring $R$ is {\em von Neumann regular} if for every $r \in R,$ there exists an $s \in R$ such that $rsr = r.$
By Theorem 1.7 of~\cite{Good}, every matrix over a von Neumann regular ring is regular.

\begin{corollary} \label{von Neumann}
The following are equivalent for the ring $R:$
\begin{enumerate}
\item it is von Neumann regular;
\item for every $n \geq 1,$ every element $(B \; | \; A)$ of $L'_n (R)$ is equivalent to a system;
\item every element $(B \; | \; A)$ of $L'_1 (R)$ is equivalent to a system;
\end{enumerate}
\end{corollary}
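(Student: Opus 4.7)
The plan is to prove the cycle $(1) \Rightarrow (2) \Rightarrow (3) \Rightarrow (1)$, leveraging Theorem~\ref{regular matrix} as the main bridge between regularity of individual matrices and the property that matrix pairs reduce to systems. Since Theorem~\ref{regular matrix} already characterizes when a \emph{single} matrix $B$ has the property that $[B \; | \; A]$ is a system for every $A$, the work in this corollary is essentially to turn ``every matrix is regular'' into ``the ring is von Neumann regular,'' and vice versa.

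For $(1) \Rightarrow (2)$, I assume $R$ is von Neumann regular. By Theorem~1.7 of~\cite{Good}, every matrix with entries in $R$ is then regular. Given any $(B \; | \; A) \in L'_n (R),$ Theorem~\ref{regular matrix} applied to $B$ (with implication $(1) \Rightarrow (2)$) produces a matrix $A'$ with $n$ columns such that $[B \; | \; A] = [A'],$ so $(B \; | \; A) \approx_n (\zero \; | \; A')$ is equivalent to a system.

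For $(2) \Rightarrow (3),$ there is nothing to prove: one simply specializes to $n = 1.$

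The interesting direction is $(3) \Rightarrow (1).$ The plan is to show that every element $r \in R,$ viewed as a $1 \times 1$ matrix, is regular. Fix $r \in R$ and let $B = (r),$ viewed as a $1 \times 1$ matrix, and take $A = I_1 = (1),$ a $1 \times 1$ matrix with $n = 1$ column. Then $(B \; | \; A) = (B \; | \; I_1) \in L'_1 (R),$ so by hypothesis $(3),$ there is some matrix $A'$ with one column such that $[B \; | \; I_1] = [A'].$ Invoking the implication $(3) \Rightarrow (1)$ of Theorem~\ref{regular matrix} with $n = 1,$ the matrix $B = (r)$ is regular, so there is a $1 \times 1$ matrix $C = (s)$ with $BCB = B,$ i.e.\ $rsr = r.$ Since $r$ was arbitrary, $R$ is von Neumann regular.

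No step presents a serious obstacle: Theorem~\ref{regular matrix} does essentially all the work, and the quoted result of Goodearl~\cite[Thm.\ 1.7]{Good} is what allows $(1)$ to pass from ring-level regularity to matrix-level regularity in the $(1) \Rightarrow (2)$ direction. The only point requiring a bit of care is to make sure that in $(3) \Rightarrow (1)$ one applies Theorem~\ref{regular matrix} with the correct dimensions, namely $n = 1$ and $B$ a $1 \times 1$ matrix, so that the conclusion ``$B$ is regular'' is literally the statement ``$r$ has a von Neumann inverse.''
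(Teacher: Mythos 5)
Your proof is correct and follows essentially the same route as the paper: the $(1) \Rightarrow (2)$ direction combines Goodearl's theorem that all matrices over a von Neumann regular ring are regular with the implication $(1) \Rightarrow (2)$ of Theorem~\ref{regular matrix}, and the $(3) \Rightarrow (1)$ direction applies the hypothesis and Theorem~\ref{regular matrix} to the pair $(r \; | \; 1) \in L'_1(R)$, exactly as the paper does.
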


\begin{proof}
To prove (1) $\Rightarrow$ (2) just note that $B$ is a regular matrix, and apply Theorem~\ref{regular matrix}. For (3) $\Rightarrow$ (1), let $r \in R,$ and apply the hypothesis and Theorem~\ref{regular matrix} to 
$(r \; | \; 1) \in L'_1 (R).$ 
\end{proof}

\begin{example} \label{field}
Suppose that $R = k$ is a field. Evidently, it is von Neumann regular, so that every $n$-ary matrix pair is the class of some system $(A)$ of linear equations. By Corollary~\ref{invertible}, we may choose the matrix of coefficients $A$ to be a row reduced echelon matrix. By Proposition~\ref{systems}.(1), this choice of $A$ is unique. Therefore, the $n$-ary matrix pairs in $L_n (k)$ are in bijective correspondence with row reduced echelon matrices with $n$ columns.
\end{example}

Example~\ref{field} together with Proposition~\ref{systems} indicates that the formal matrix calculus presented reduces over $k$ to the study of homogeneous systems of linear equations.

\subsection{Duality.}

The opposite ring of $R$ is denoted $R^{\op}.$ Its elements are those of $R,$ as is the underlying abelian group structure, but the multiplication $\ast$ in $R^{\op}$ is given by $r \ast s = sr,$ where the multiplication on the right is carried out in $R.$ More generally, multiplication of matrices over $R^{\op}$ is denoted $A \ast B.$ It is related to multiplication of matrices over $R$ by the equation $$(A \ast B)^{\tr} = (B^{\tr}) (A^{\tr}).$$

\begin{theorem} \label{pp-duality}
If $(B \; | \; A) \leq (B' \; | \; A')$ in $L'_n (R),$ then in $L'_n (R^{\op}),$	
$$\left( \begin{array}{c|c}
(B')^{\tr} & \zero \\ (A')^{\tr} & I_n  \end{array} \right) \leq
\left( \begin{array}{c|c}
B^{\tr} & \zero \\ A^{\tr} & I_n  \end{array} \right).$$
\end{theorem}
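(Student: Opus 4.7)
The plan is to unwind the definition of $\leq$ in $L'_n(R^{\op})$ and translate the problem via transposition into a matrix problem over $R$, where the given data $U, V, G$ from the hypothesis can be assembled into an explicit witness.

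Write the two pairs in $L'_n(R^{\op})$ as $(\tilde B' \; | \; \tilde A')$ and $(\tilde B \; | \; \tilde A)$, where
$\tilde B' = \left( \begin{smallmatrix} (B')^{\tr} \\ (A')^{\tr} \end{smallmatrix} \right)$, $\tilde A' = \left( \begin{smallmatrix} \zero \\ I_n \end{smallmatrix} \right)$, $\tilde B = \left( \begin{smallmatrix} B^{\tr} \\ A^{\tr} \end{smallmatrix} \right)$, $\tilde A = \left( \begin{smallmatrix} \zero \\ I_n \end{smallmatrix} \right)$. By definition we need matrices $U^{\ast}, V^{\ast}, G^{\ast}$ over $R^{\op}$ such that $U^{\ast} \ast \tilde B' = \tilde B \ast V^{\ast}$ and $U^{\ast} \ast \tilde A' = \tilde A + \tilde B \ast G^{\ast}$. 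Using the identity $(X \ast Y)^{\tr} = Y^{\tr} X^{\tr}$, taking transposes of both equations turns the problem into finding matrices $\bar U = (U^{\ast})^{\tr},$ $\bar V = (V^{\ast})^{\tr},$ $\bar G = (G^{\ast})^{\tr}$ over $R$ satisfying
\begin{equation*}
(B', A') \, \bar U = \bar V \, (B, A) \quad \text{and} \quad (\zero, I_n) \, \bar U = (\zero, I_n) + \bar G \, (B, A).
\end{equation*}

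The hypothesis supplies $U, V, G$ with $UB = B'V$ and $UA = A' + B'G$, i.e., $U(B, A) = (B'V, A' + B'G) = (B', A')\left( \begin{smallmatrix} V & G \\ \zero & I_n \end{smallmatrix} \right).$ This identity is the key observation: it suggests setting
\begin{equation*}
\bar V = U, \qquad \bar U = \begin{pmatrix} V & G \\ \zero & I_n \end{pmatrix}, \qquad \bar G = \zero.
\end{equation*}
The first required equation is then immediate from the hypothesis. For the second, a direct block computation gives $(\zero, I_n)\bar U = (\zero, I_n)$, so the equation reduces to $\zero = \bar G(B,A)$, which is satisfied by $\bar G = \zero$. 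Transposing back then produces the desired $U^{\ast} = \left( \begin{smallmatrix} V^{\tr} & \zero \\ G^{\tr} & I_n \end{smallmatrix} \right)$, $V^{\ast} = U^{\tr}$, $G^{\ast} = \zero$ witnessing the inequality in $L'_n(R^{\op})$.

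There is no real obstacle beyond keeping the bookkeeping straight: one must verify the block dimensions are consistent ($\bar U$ of size $(k'+n)\times(k+n)$, $\bar V$ of size $m'\times m$, $\bar G$ of size $n \times m$), and then check the two matrix identities. The only mildly clever step is guessing the form of $\bar U$, but the structure of the hypothesis $U(B,A) = (B'V, A' + B'G)$ makes this natural once the problem is recast as a right-multiplication equation over $R$.
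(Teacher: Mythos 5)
Your proof is correct and follows essentially the same route as the paper: both hinge on rewriting the hypothesis as $U(B,A) = (B',A')\left(\begin{smallmatrix} V & G \\ \zero & I_n \end{smallmatrix}\right)$ and produce the identical witnesses $U^{\ast} = \left(\begin{smallmatrix} V^{\tr} & \zero \\ G^{\tr} & I_n \end{smallmatrix}\right)$, $V^{\ast} = U^{\tr}$, $G^{\ast} = \zero$. The only difference is presentational: you transpose everything back to equations over $R$ before solving, while the paper works directly with $\ast$-multiplication in $R^{\op}$.
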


\begin{proof} 
We are given matrices $U,$ $V$ and $G$ such that $UB = B'V$ and $UA = A' + B'G.$ In short,
$$(B', A') \left( \begin{array}{cc} V & G \\ \zero & I_n \end{array} \right) = U(B, A).$$	
In $L_n (R^{\op}),$ this yields
$$\left( \begin{array}{cc} V^{\tr} & \zero \\ G^{\tr} & I_n \end{array} \right) \ast 
\left( \begin{array}{c} (B')^{\tr} \\ (A')^{tr} \end{array} \right) = 
\left( \begin{array}{c} B^{\tr} \\ A^{tr} \end{array} \right) \ast U^{\tr}.$$
But also note that
$$\left( \begin{array}{cc} V^{\tr} & \zero \\ G^{\tr} & I_n \end{array} \right) \ast 
\left( \begin{array}{c} \zero \\ I_n \end{array} \right) = \left( \begin{array}{c} \zero \\ I_n \end{array} \right).$$
Letting ${\dis U' = \left( \begin{array}{cc} V^{\tr} & \zero \\ G^{\tr} & I_n \end{array} \right),}$ 
$V' = U^{\tr},$ and $G' = \zero$ establishes the assertion.
\end{proof}

Theorem~\ref{pp-duality} implies that the rule given by
$$[B \; | \; A] \mapsto [B \; | \; A]^* := 
\left[ \begin{array}{c|c}	B^{\tr} & \zero \\ A^{\tr} & I_n   \end{array} 	\right].$$
is a well-defined anti-morphism from the partial order $L_n (R)$ to the partial order $L_n (R^{\op}).$
There exists a similarly defined function in the opposite direction, from $L_n (R^{\op})$ to $L_n (R).$ It is also denoted by $[B \; | \; A] \mapsto [B \; | \; A]^*$ with $A$ and $B$ matrices over $R^{\op}.$ Let us verify that these
maps are mutual inverses. First note that
$$[B \; | \; A]^{**} = \left[ \begin{array}{c|c} B^{\tr} & \zero \\	A^{\tr} & I_n  \end{array} 	\right]^* =
\left[ \begin{array}{cc|c} B & A & \zero \\ \zero & I_n & I_n \end{array} \right].$$
Multiplying the bottom ''row'' by $A$ on the left, and substracting from the top ''row'' yields the equation
$$\left[ \begin{array}{cc|c} B & A & \zero \\ \zero & I_n & I_n \end{array} \right] =
\left[ \begin{array}{cc|c} B & \zero & -A \\ \zero & I_n & I_n \end{array} \right].$$
This is just the infimum $[I_n \; | \; I_n] \wedge [B \; | \; -A] = [B \; | \; -A].$ But 
$[B \; | \; -A] = [B \; | \; A]$ as a result of multiplying both $A$ and $B$ by $-I_m$ on the left and then multiplying $B$ by $-I_k$ on the right.

\begin{corollary} \label{lattice duality}
The map $[B \; | \; A] \mapsto [B \; | \; A]^*$ is an anti-isomorphism between $L_n (R)$ and $L_n (R^{\op}).$
\end{corollary}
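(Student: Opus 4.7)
The plan is to assemble the corollary from the three facts just established in the paragraphs preceding it. First, by Theorem~\ref{pp-duality}, the rule $[B\;|\;A]\mapsto [B\;|\;A]^{*}$ is a well-defined order-reversing map $L_n(R)\to L_n(R^{\op})$. Second, since $(R^{\op})^{\op}=R$, applying the same construction with $R$ replaced by $R^{\op}$ (and invoking Theorem~\ref{pp-duality} inside $R^{\op}$) gives an order-reversing map $L_n(R^{\op})\to L_n(R)$, which I will also denote $[B\;|\;A]\mapsto[B\;|\;A]^{*}$. So the only thing left is to check that these two maps are mutually inverse.

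Next I would invoke the explicit computation that was carried out immediately before the statement of the corollary. There it was shown, by expanding $[B\;|\;A]^{**}$ as
\[
\left[\begin{array}{cc|c} B & A & \zero \\ \zero & I_n & I_n \end{array}\right],
\]
using the Rule of Divisibility RoD~(3) to subtract $A$ times the bottom row from the top row, recognizing the result as the infimum $[I_n\;|\;I_n]\wedge[B\;|\;-A]$ (via Proposition~\ref{infimum}), and observing $[I_n\;|\;I_n]=1_n$ while $[B\;|\;-A]=[B\;|\;A]$ (by Corollary~\ref{invertible} applied with $P=-I_m$ and $Q=-I_k$), that $[B\;|\;A]^{**}=[B\;|\;A]$. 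The same argument, carried out in $L_n(R^{\op})$, shows that the map $L_n(R^{\op})\to L_n(R)\to L_n(R^{\op})$ is also the identity.

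With both compositions equal to the identity, each map is a bijection, and together with the order-reversing property established in Theorem~\ref{pp-duality} this means the map is an anti-isomorphism of partial orders. The main, and essentially only, obstacle was the involutivity computation $[B\;|\;A]^{**}=[B\;|\;A]$, and that has already been done in the discussion leading up to the corollary; the corollary itself is a formal consequence of that computation together with Theorem~\ref{pp-duality}, so the proof amounts to little more than citing those two results and the symmetry between $R$ and $R^{\op}$.
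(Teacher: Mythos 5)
Your proposal is correct and follows exactly the route the paper takes: Theorem~\ref{pp-duality} gives the well-defined order-reversing maps in both directions, and the displayed computation of $[B\;|\;A]^{**}$ via RoD~(3), Proposition~\ref{infimum}, and the sign-change trick shows the two maps are mutually inverse. The paper presents this material as the discussion immediately preceding the corollary rather than as a separate proof, so your write-up is essentially a faithful reorganization of its argument.
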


Consider the $n$-ary matrix pair $[A]$ associated to a system. According to this anti-isomorphism, its dual in 
$L_n (R^{\op})$ is given by
$$[A]^* = [\zero \; | \; A]^* = [A^{\tr} \; | \; I_n].$$
This observation lends importance to the family of $n$-ary matrix pairs in $L_n (R)$ of the form $[B \; | \; I_n]$ for some $n \times k$ matrix $B.$ It is the family dual, in the sense of this anti-isomorphism, to the family of $n$-ary matrix pairs associated to systems in $L'_n (R).$ Condition (3) of Theorem~\ref{regular matrix} describes those $n$-ary matrix pairs that belong to the intersection of these two families. 

The supremum operation in $L_n (R)$ may then be given in terms of the infimum operation in $L_n (R^{\op}):$ 
$$[B \; | \; A] + [B' \; | \; A'] := ([B \; | \; A]^* \wedge [B' \; | \; A']^*)^*.$$
It is readily computed as
$$[B \; | \; A] + [B' \; | \; A'] := \left[ \begin{array}{cccc|c}
B & A & \zero & \zero & \zero \\
\zero & \zero & B' & A' & \zero \\
\zero & I_n & \zero & I_n & I_n  \end{array} \right].$$
	
With the infimum and supremum operations now both defined, the partial order $L_n (R)$ acquires the structure of a 
{\em lattice} with minimum and maximum elements. Recall that a lattice is {\em modular} if $a \leq b$ implies that
$(a+c) \wedge b = a + (b \wedge c).$

\begin{theorem} \label{modular}
The partial order $L_n (R)$ is a modular lattice with maximum and minimum elements.
\end{theorem}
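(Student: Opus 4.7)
My plan is to realize $L_n (R)$ as a lattice-embedded sublattice of an obviously modular lattice, thereby inheriting modularity for free rather than grinding through the modular identity with the explicit block matrices for $\wedge$ and $+.$ The extremal elements $0_n$ and $1_n$ have already been identified (Propositions~\ref{minimum} and~\ref{maximum}), so only the modular identity $(a+c) \wedge b = a + (b \wedge c)$ whenever $a \leq b$ remains to verify. The inequality $a + (b \wedge c) \leq (a+c) \wedge b$ is automatic in any lattice: $a \leq a+c$ and $b \wedge c \leq c \leq a+c$ give $a + (b \wedge c) \leq a+c,$ while $a \leq b$ and $b \wedge c \leq b$ give $a + (b \wedge c) \leq b.$

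For the non-trivial direction, for each left $R$-module ${_R}M$ the evaluation $[B \; | \; A] \mapsto (B \; | \; A)(M)$ takes values in the subgroup lattice of $({_R}M)^n.$ I would first argue that this assignment preserves both meet and join. Preservation of $\wedge$ is immediate from Proposition~\ref{infimum}: a vector $\bv$ lies in the solution set of the stacked system iff it lies in both $(B \; | \; A)(M)$ and $(B' \; | \; A')(M).$ Preservation of $+$ is read off the explicit supremum block from the Introduction: the $I_n$ entries in its last ``row'' force a new variable $\bv$ to be the sum of two existentially quantified vectors, the first constrained to $(B \; | \; A)(M)$ and the second to $(B' \; | \; A')(M),$ so that
$$\bigl([B \; | \; A] + [B' \; | \; A']\bigr)(M) \; = \; (B \; | \; A)(M) + (B' \; | \; A')(M).$$

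Assembling the evaluations over all modules gives a map
$$\Phi : L_n (R) \too \prod_{{_R}M} \mbox{\rm Sub}\bigl(({_R}M)^n\bigr), \qquad [B \; | \; A] \mapsto \bigl((B \; | \; A)(M)\bigr)_M,$$
which preserves $\wedge$ and $+$ by the previous paragraph and which is order-reflecting, hence injective, by Lemma Presta~II (Proposition~\ref{Lemma Presta 2}). The target is a product of subgroup lattices of abelian groups, which are modular by the classical Dedekind theorem; a product of modular lattices is modular, and the image of an injective lattice homomorphism into a modular lattice is modular. The Dedekind identity in each factor, applied to the inclusion $a(M) \subseteq b(M)$, yields $((a+c) \wedge b)(M) = (a + (b \wedge c))(M)$ for every $M,$ which by injectivity of $\Phi$ forces $(a+c) \wedge b = a + (b \wedge c)$ in $L_n (R).$

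The main obstacle is the preservation of $+$ under $\Phi.$ The supremum block matrix has a tangle of zero and identity blocks, and the verification reduces to correctly matching columns of the left-hand block with existentially bound variables of the associated pp-formula, and then identifying the two variables whose sum equals $\bv.$ This is the one step where actual matrix bookkeeping is unavoidable; everything else is either the classical Dedekind modular law for abelian group subgroups or a direct invocation of Lemma Presta~II.
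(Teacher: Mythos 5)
Your proof is correct, but it takes a genuinely different route from the paper. The paper's proof is a self-contained computation inside the formal calculus: it writes out the block matrix representing $(a+c)\wedge b$, uses the hypothesis $a\leq b$ to produce matrices $U,$ $V,$ $G$ with $UB=B'V$ and $UA=A'+B'G,$ and then applies the Rules of Divisibility (row and column operations on the blocks) until the matrix pair visibly coincides with the one representing $a+(b\wedge c).$ Your argument instead evaluates matrix pairs in every module, observes that evaluation preserves $\wedge$ (clear from Proposition~\ref{infimum}) and $+$ (a short bookkeeping check on the supremum block), is order-reflecting by Lemma Presta~II, and then imports Dedekind's modular law for subgroup lattices of abelian groups. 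Both are sound and neither is circular: Lemma Presta~II rests on Prest's completeness theorem and G\"{o}del's theorem, not on Theorem~\ref{modular}. What your route buys is brevity and conceptual transparency --- modularity is inherited from an obviously modular ambient lattice, and the one nontrivial lattice inequality is isolated cleanly. What it costs is exactly what the paper is at pains to avoid in Section~1: the appeal to the completeness theorem, which the paper defers to Section~4 and treats as an external input. Indeed the introduction explicitly notes that \emph{via} Lemma Presta the theorem ``asserts nothing more than'' the known modularity of pp-lattices; the point of the paper's proof is to derive it purely syntactically from the three Rules of Divisibility, and the block-manipulation technique it develops is reused later (e.g.\ in Lemma~\ref{elimination}). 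One small caveat: the join-preservation of evaluation, which you correctly flag as the crux of your argument, is asserted in Section~4.2 of the paper without proof, so in a fully written-up version you would still have to carry out that matrix-to-formula translation explicitly.
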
 

\begin{proof}
Let us verify the equation for modularity with $a = [B \; | \; A],$ $b = [B' \; | \; A'],$ and $c = [B'' \; | \; A''].$ We are given that $a \leq b$ so there exist matrices $U,$ $V,$ and $G$ such that $UB = B'V$ and $UA = A' + B'G.$ Let us apply some operations (explained below) to the $n$-ary matrix pair $(a+c) \wedge b:$
\begin{eqnarray*}
\left[ \begin{array}{ccccc|c}
B & A & \zero & \zero & \zero & \zero \\
\zero & \zero & B'' & A'' & \zero & \zero \\
\zero & I_n & \zero & I_n & \zero & I_n \\
\zero & \zero & \zero & \zero & B' & A'
\end{array} \right] & = &
\left[ \begin{array}{ccccc|c}
B & A & \zero & \zero & \zero & \zero \\
\zero & \zero & B'' & A'' & \zero & \zero \\
\zero & I_n & \zero & I_n & \zero & I_n \\
UB & UA & \zero & \zero & B' & A'
\end{array} \right] \\
& = & 
\left[ \begin{array}{ccccc|c}
B & A & \zero & \zero & \zero & \zero \\
\zero & \zero & B'' & A'' & \zero & \zero \\
\zero & I_n & \zero & I_n & \zero & I_n \\
B'V & A' + B'G & \zero & \zero & B' & A'
\end{array} \right] \\
& = & 
\left[ \begin{array}{ccccc|c}
B & A & \zero & \zero & \zero & \zero \\
\zero & \zero & B'' & A'' & \zero & \zero \\
\zero & I_n & \zero & I_n & \zero & I_n \\
\zero & A' & \zero & \zero & B' & A'
\end{array} \right].
\end{eqnarray*}
In the first equality, we multiplied the top ''row'' on the left by $U$ and added it to the botom. In the last equality, we subtracted right multiples, by $V$ and $G,$ respectively, of the fifth column from the first and second, respectively. Now we multiply the third ''row'' by $A'$ (on the left) and substract from the bottom ''row'' to get
\begin{eqnarray*}
\left[ \begin{array}{ccccc|c}
B & A & \zero & \zero & \zero & \zero \\
\zero & \zero & B'' & A'' & \zero & \zero \\
\zero & I_n & \zero & I_n & \zero & I_n \\
\zero & \zero & \zero & -A' & B' & \zero
\end{array} \right] & = & 
\left[ \begin{array}{ccccc|c}
B & A & \zero & \zero & \zero & \zero \\
\zero & \zero & B'' & A'' & \zero & \zero \\
\zero & I_n & \zero & I_n & \zero & I_n \\
\zero & \zero & \zero & A' & B' & \zero
\end{array} \right].
\end{eqnarray*}
The equality follows by multiplying the bottom ''row'' and fifth column by $-I.$ All that remains is to permute simultaneously some rows of both matrices, and some columns of the left matrix to obtain
$$ \left[ \begin{array}{ccccc|c}
B & A & \zero & \zero & \zero & \zero \\
\zero & \zero & B' & \zero & A' & \zero \\
\zero & \zero & \zero & B'' & A'' & \zero \\
\zero & I_n & \zero & \zero & I_n & I_n 
\end{array} \right] = a + (b \wedge c).$$
\end{proof}

\subsection{Morphisms of Rings.}

Let $f : R \to S$ be a morphism of rings. If $A$ is a matrix with entries in $R,$ denote by $f (A)$ the matrix over $S,$ of the same dimensions, obtained by applying $f$ to the entries of $A.$ Then it is easy to see that if 
$(B \; | \; A) \leq (B' \; |\; A')$ in $L'_n (R),$ then 
$(f(B) \; | \;  f(A)) \leq (f(B') \; |\; f(A'))$ in $L'_n (S).$ This is because the three matrices $U,$ $V,$ and $G$ that arise from the former inequality are taken by $f$ to matrices $f(U),$ $f(V),$ and $f(G)$ that establish the latter. This induces a morphism of partial orders 
$$[B \; | \; A] \mapsto [f(B) \; | \; f(A)],$$ 
which is denoted by $L_n (f) : L_n (R) \to L_n (S).$

Recall that a morphism $f : R \to S$ is an {\em epimorphism} if whenever ring morphisms $g,$ $h : S \to T$ are given such that $gf = hf,$ then $g = h.$ Silver~\cite{Sil} and Mazet~\cite{Maz} proved that a morphism $f: R \to S$ of rings is an epimorphism if and only if every element $s \in S,$ considered as a $1 \times 1$ matrix, may be factored as
$$XPY = s,$$ where $X,$ $P,$ and $Y$ are matrices of appropriate size, such that the entries of $XP,$ $P,$ and $PY$ all lie in the image of $f.$ We shall require the following slight strengthening of their result.

\begin{lemma}
A morphism $f : R \to S$ of rings is an epimorphism if and only if every matrix $A$ with entries in $S$ has a factorization $A = XPY,$ such that $XP,$ $P$ and $PY$ have entries in the image of $f.$
\end{lemma}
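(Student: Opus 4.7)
The forward direction is immediate: if every matrix over $S$ admits a factorization of the claimed form, then in particular every element of $S,$ viewed as a $1 \times 1$ matrix, does, and the Silver--Mazet characterization gives that $f$ is an epimorphism.

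For the converse, assume $f$ is an epimorphism and let $A = (s_{ij})$ be an $m \times n$ matrix over $S.$ By Silver--Mazet applied entrywise, for each pair $(i,j)$ there exist matrices $X_{ij}$ of size $1 \times p_{ij},$ $P_{ij}$ of size $p_{ij} \times q_{ij},$ and $Y_{ij}$ of size $q_{ij} \times 1,$ all over $S,$ with $s_{ij} = X_{ij} P_{ij} Y_{ij}$ and with every entry of $X_{ij} P_{ij},$ $P_{ij},$ and $P_{ij} Y_{ij}$ lying in $f(R).$ The plan is to assemble these into a single factorization $A = XPY$ by arranging the blocks so that all off-diagonal ''cross-terms'' in the matrix product vanish automatically.

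Order the index set $\{(i,j)\}$ arbitrarily. Define $P$ to be the block diagonal matrix with diagonal blocks $P_{ij},$ of size $(\sum p_{ij}) \times (\sum q_{ij}).$ Define $X$ to be the $m \times (\sum p_{ij})$ block matrix whose $i$-th row has the block $X_{ij'}$ in the column-block position indexed by $(i, j')$ for each $j' = 1, \ldots, n,$ and zero blocks in every column-block position $(i', j')$ with $i' \neq i.$ Dually, define $Y$ to be the $(\sum q_{ij}) \times n$ block matrix whose $j$-th column has the block $Y_{i'j}$ in the row-block position $(i', j)$ for each $i' = 1, \ldots, m,$ and zero blocks in every row-block position $(i', j')$ with $j' \neq j.$

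A direct block computation then shows that the block of $XP$ in row $i$ and column-block position $(i'', j'')$ is $X_{i, j''} P_{i, j''}$ when $i'' = i$ and zero otherwise, so every entry of $XP$ lies in $f(R);$ symmetrically every entry of $PY$ lies in $f(R).$ Because $P$ is block diagonal, in forming $XPY$ only the terms with $(i'',j'') = (i,j)$ survive, and one recovers $(XPY)_{ij} = X_{ij} P_{ij} Y_{ij} = s_{ij},$ i.e.\ $XPY = A.$ The only real content beyond indexing is the already-cited scalar version of the theorem; the combinatorial arrangement of zero blocks takes care of the rest, and I do not anticipate any genuine obstacle.
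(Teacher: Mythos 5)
Your proof is correct, but it takes a genuinely different route from the paper's. The paper first observes that $M_n (f) : M_n (R) \to M_n (S)$ is again a ring epimorphism and applies the scalar Silver--Mazet theorem to that matrix ring, which handles square matrices in one stroke; it then treats a non-square $A$ by padding with zero columns (or rows) to reach a square matrix and discarding the unwanted part of $Y$ afterwards. You instead apply the scalar Silver--Mazet factorization to each entry $s_{ij}$ separately and assemble the pieces into a single factorization $A = XPY$ with $P$ block diagonal and $X,$ $Y$ carrying the blocks $X_{ij},$ $Y_{ij}$ in positions arranged so that the only surviving term in $(XPY)_{ij}$ is $X_{ij}P_{ij}Y_{ij} = s_{ij}$; I checked the block computation and it is sound (the zero blocks of $XP$ and $PY$ lie in $f(R)$ since $0 = f(0)$). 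The trade-off: the paper's argument is shorter but silently relies on the fact that $M_n(-)$ preserves ring epimorphisms, which it does not prove; your argument needs only the $1 \times 1$ case and is therefore more self-contained, at the price of heavier block-indexing. One cosmetic point: what you call the ``forward direction'' (factorization implies epimorphism) is the converse of the lemma as stated, but since you prove both implications this is only a labelling slip.
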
 

\begin{proof}
The corresponding morphism $M_n (f) : M_n (R) \to M_n (S)$ of $n \times n$ matrix rings is also a ring epimorphism, so if $A$ is a square matrix, say $n \times n,$ then the theorem of Mazet and Silver applied to $M_n (f)$ proves the claim.
Suppose now that $A$ is an $m \times n$ matrix, where $m > n.$ Let $k = m - n,$ and apply the foregoing to the square matrix $(A, {_m}\zero_{k}).$ We obtain a Silver-Mazet factorization
$$(A, {_m}\zero_{k}) = XPY = XP (Y', Y'')$$
where $Y = (Y', Y'')$ has been decomposed so that $Y'$ has $n$ columns and $Y''$ has $k$ columns. Thus $XP,$ $P,$ and 
$PY = P(Y', Y'')$ have entries in $R.$ But then $A = XPY'$ and $PY'$ also has entries from $R.$ The case when $m < n$ is handled similarly.  
\end{proof}

\begin{theorem} \label{epi}
The following are equivalent for a morphism $f : R \to S$ of rings:
\begin{enumerate}
\item it is an epimorphism; 
\item for every matrix $A,$ the system $[A]$ is in the image of $L_n (f),$ where $n$ is the number of columns of $A;$ 
\item for every $n \geq 1,$ the induced morphism $L_n (f) : L_n (R) \to L_n (S)$ is onto;
\item the induced morphism $L_2 (f) : L_2 (R) \to L_2 (S)$ is onto.
\end{enumerate}
\end{theorem}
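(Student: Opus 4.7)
The plan is to establish the four equivalences via a small cycle, noting first that $(3) \Rightarrow (2)$ (a system is a particular pair) and $(3) \Rightarrow (4)$ (the case $n = 2$) are immediate. The substance lies in $(1) \Rightarrow (3)$ and in a single implication like $(4) \Rightarrow (1)$; I will arrange the latter so that it uses only a $2$-ary \emph{system}, thereby simultaneously yielding $(2) \Rightarrow (1)$.

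For $(1) \Rightarrow (3)$, the idea is to ``pull back'' each pair over $S$ to a pair over $R$ via Silver--Mazet. Given $(B \; | \; A) \in L'_n(S)$ with $B \in S^{m \times k}$, apply the strengthened matrix form of Silver--Mazet (the lemma immediately preceding the theorem) to the concatenated matrix $(B, A)$, obtaining $(B, A) = XP(Y_B, Y_A)$ in which each of $XP$, $P$, $PY_B$, $PY_A$ has entries in $f(R)$. I then propose
$$(B' \; | \; A') := \left(\begin{array}{cc|c} -PY_B & P & -PY_A \\ \zero & XP & \zero \end{array}\right),$$
whose entries all lie in $f(R)$ and which therefore descends to a pair in $L'_n(R)$. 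To verify $[B' \; | \; A'] = [B \; | \; A]$ in $L_n(S)$, I will write down the matrices witnessing both inequalities of the Rules of Divisibility directly: for $[B \; | \; A] \leq [B' \; | \; A']$ one can take $U = \binom{\zero}{I_m}$, $V = \binom{I_k}{Y_B}$, $G = \binom{\zero}{Y_A}$, and for the reverse $U = (-X, \; I_m)$, $V = (I_k, \; \zero)$, $G = \zero$. Each check reduces algebraically to the identities $B = XPY_B$ and $A = XPY_A$.

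For $(4) \Rightarrow (1)$, the strategy is to verify, for each $s \in S$, the identity $s \otimes 1 = 1 \otimes s$ in $S \otimes_R S$, using the hypothesis applied to the single $2$-ary system $[(1, -s)] \in L_2(S)$. By (4), this system is the image under $L_2(f)$ of some $(B_0 \; | \; A_0) \in L'_2(R)$, where $A_0$ has columns $a, b \in R^m$. Applying Proposition~\ref{systems}(2) to the resulting equation in $L'_2(S)$ yields matrices $U \in S^{1 \times m}$, $W \in S^{m \times 1}$, and $(G_1, G_2) \in S^{k \times 2}$ satisfying $Uf(B_0) = \zero$, $Uf(a) = 1$, $Uf(b) = -s$, $W = f(a) + f(B_0)G_1$, and $-Ws = f(b) + f(B_0)G_2$; in particular $UW = 1$ in $S$. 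The chief technical obstacle is a bookkeeping computation inside $S \otimes_R S$: using that the entries of $f(a)$, $f(b)$, and $f(B_0)$ lie in $f(R)$ and so slide across $\otimes_R$, I will rewrite both $s \otimes 1$ and $1 \otimes s$ as the common sum $\sum_i U_i \otimes W_i s$ --- the former via $s = -Uf(b)$ followed by $-f(b)_i = W_is + \sum_j f(B_0)_{ij}G_{2,j}$, the latter via $1 = Uf(a)$ followed by $f(a)_i = W_i - \sum_j f(B_0)_{ij}G_{1,j}$, with $Uf(B_0) = 0$ killing the cross terms in each case. Hence $s \otimes 1 = 1 \otimes s$ for every $s$, the multiplication $S \otimes_R S \to S$ is an isomorphism, and $f$ is a ring epimorphism by the classical characterization. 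Since the pair invoked is itself a system, the very same argument establishes $(2) \Rightarrow (1)$.
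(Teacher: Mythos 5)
Your proposal is correct, and I verified both of your explicit computations: the witnesses $U, V, G$ for the two inequalities in your $(1) \Rightarrow (3)$ step do reduce to the identities $B = XPY_B$ and $A = XPY_A$, and in your $(4) \Rightarrow (1)$ step both $s \otimes 1$ and $1 \otimes s$ do collapse to $\sum_i U_i \otimes W_i s$ because $Uf(B_0) = \zero$ kills the cross terms. However, you take a genuinely different route from the paper at both of the substantive steps. The paper runs the cycle $(1) \Rightarrow (2) \Rightarrow (3) \Rightarrow (4) \Rightarrow (1)$: for $(1) \Rightarrow (2)$ it factors $A = XPY$ and uses the lattice identity $[A] = [\zero \; | \; A] \wedge [-P \; | \; PY]$ together with Proposition~\ref{maximum}; for $(2) \Rightarrow (3)$ it represents the system $[B, A] \in L_{k+n}(S)$ by a pair over $f(R)$ and converts it back to a representation of $[B \; | \; A] \in L_n(S)$ via Proposition~\ref{systems}.(3). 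You instead prove $(1) \Rightarrow (3)$ in one shot with a larger explicit pair built from the blocks $XP$, $P$, $PY_B$, $PY_A$, which avoids the detour through $L_{k+n}$ and Proposition~\ref{systems}.(3) at the cost of having to exhibit the witnessing matrices by hand; this also forces you to recover $(2)$ as a trivial consequence of $(3)$ rather than as the natural intermediate. For $(4) \Rightarrow (1)$ the paper works directly from the definition of epimorphism, taking $g, h : S \to T$ with $gf = hf$ and applying the difference $p = g - h$ to the same three identities from Proposition~\ref{systems}.(2) that you use; you instead invoke the classical criterion that $f$ is an epimorphism if and only if $s \otimes 1 = 1 \otimes s$ in $S \otimes_R S$ for all $s$. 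The underlying algebra is essentially identical (both arguments exploit $Uf(B_0) = \zero$ twice and $Uf(a) = 1$ once), and your tensor computation closely mirrors the one the paper itself carries out in \S 4.3 when verifying $\triangleleft_n$; the trade-off is that your version depends on an external classical equivalence that the paper never states, whereas the paper's version is self-contained modulo the definition of epimorphism.
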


\begin{proof}
($1$) $\Rightarrow$ ($2$). Factor $A = XPY,$ according to the lemma, so that $XP,$ $P,$ and $PY$ have entries in the image of $f.$ Then
\begin{eqnarray*}
\left[\begin{array}{c|c} XP & \zero \\ -P & PY \end{array} \right] & = & 
\left[\begin{array}{c|c} \zero & XPY \\ -P & PY \end{array} \right] \\
& = & [\zero \; | \; A] \wedge [-P \; | \; PY] = [A].
\end{eqnarray*}
The first equality is obtained by multiplying the bottom ''row'' on the left by $X$ and adding it to the top; the second follows from Proposition~\ref{infimum}; and the last because $[-P \; | \; PY] = 1_n$ (Proposition~\ref{maximum}).
\smallskip

\noindent ($2$) $\Rightarrow$ ($3$). Let $[B \; | \; A] \in L_n (S),$ where $B$ has $k$ columns. By assumption
$$[B, A] = [B' \; | \; A', A'']$$ in $L_{k+n} (S),$ where $A'$ has $k$ columns, $A'$ has $n$ columns, and the entries of $A',$ $A'',$ and $B'$ all lie in the image of $f.$ By Proposition~\ref{systems}.(3),
$[B \; | \; A] =  [B', A' \; | \; A''].$ \smallskip

\noindent ($4$) $\Rightarrow$ ($1$). Let $g,$ $h: S \to T$ be morphisms such that $gf = hf.$ Pick $s \in S;$ we must show that $g(s) = h(s).$ Consider the $1 \times 2$ matrix $A = (1,s).$ By hypothesis, there are matrices $A'$ and $B'$ with entries from the image of $f$ such that 
$$[(1,s)] = [B' \; | \; A'].$$ 
By assumption, the morphisms $g$ and $h$ agree on the entries of $A'$ and $B':$ $g (A') = h(A')$ and $g(B') = h(B').$ By Proposition~\ref{systems}.(2), there are matrices $U,$ $W,$ and $G$ such that 
$$UB' = \zero; \;\;\; UA' = (1,s); \mbox{  and  }  W(1,s) = A' + B'G.$$
Now $A'$ and $G$ are also matrices with $2$ columns so we may express them as $A' = (A'_1, A'_2)$ and $G = (G_1, G_2).$
The equations then become
$$UB' = \zero; \;\;\; U(A'_1, A'_2) = (1,s); \mbox{  and  }  W(1,s) = (A'_1, A'_2) + B'(G_1,G_2).$$
Eliminating $W$ from the third equation yields
$$A'_2 + B'G_2 = Ws = A'_1s + B'G_1s.$$
Apply $p = g - h,$ a function that is not a ring morphism, to both sides of the equation, to obtain
$$g(B')p(G_2) = g(A'_1)p(s) + g(B')p(G_1s).$$
Then multiply on the left by $ g(U)$ and use the fact that $g(U)g(B') = g(UB') = \zero,$ and
$g(U)g(A'_1) = g(UA'_1) = g(1) = 1.$ Whence $p(s) = 0.$ 
\end{proof}

There are morphism $f : R \to S$ that are not epimorphisms, but have the property that $L_1 (f) : L_1 (R) \to L_1 (S)$ is onto. For example, if $f: k \to k'$ is an extension of commutative fields with nontrivial Galois group, 
$\Gal (k'/k) \neq 1.$ Then $f$ is not an epimorphism, but both $L_1 (k)$ and $L_1 (k')$ contain nothing more than the respective maximum and minimum elements, which $L_1 (f)$ respects.
\section{The Grothendieck Group of Finitely Presented Modules}

In this section, we apply the formal moartix calculus to give several presentations of the Grothendieck group $K_0 (\Rmod, \oplus)$ of finitely presented left $R$-modules. One of these presentations is used in the next section as the basis for a homology theory.\bigskip

A left $R$-module ${_R}M$ is {\em finitely presented} if there is an exact sequence, a {\em free presentation} of ${_R}M,$ of the form
\DIAG
{{_R}R^m} \n {\Ear{\grf}} \n {{_R}R^n} \n {\ear} \n {M} \n {\ear} \n {0.} 
\diag
The morphism $\grf : R^m \to R^n$ is given by multiplication on the right by an $m \times n$ matrix $A,$
$$\grf = - \times A.$$ One says that ${_R}M$ is {\em presented} by the matrix $A,$ and writes ${_R}M = M_A.$
Two matrices $A$ and $B$ are {\em equivalent,} denoted $A \sim B,$ if they present isomorphic modules, $M_A \isom M_B.$ The equivalence class of a matrix $A$ is denoted by $\{ A \}.$

Let $\Rmod$ denote the {\em category} of finitely presented modules and define $K_0 (\Rmod, \oplus)$ to be the free group on the symbols $\{ M \},$ $M \in \Rmod,$ modulo the relations 		
$$\{ M \oplus N \} = \{ M \} + \{ N \}.$$ 
It may also be defined as the free group on the equivalence classes $\{ A \}$ of matrices, modulo the relations $$\left\{ \left( \begin{array}{cc}
A & \zero \\ \zero & B \end{array} \right) \right\} = \{ A \} + \{ B \}.$$
This group $K_0 (\Rmod, \oplus)$ is isomorphic to the {\em Grothendieck group} $K_0 (\Ab (R))$ of the {\em free abelian category} $\Ab (R)$ over $R.$ This follows from the fact that the subcategory of projective objects of $\Ab (R)$ is dual to $\Rmod$~\cite{Adel}. 

\begin{theorem} \label{Lickorish} \cite[Theorem 6.1]{Lick}
Two matrices $A$ and $A'$ with entries in the ring $R$ are equivalent $A \sim A'$ if and only if $A'$ may be obtained from $A$ by a sequence of (invertible) operations of the following form:
\begin{enumerate}
\item addition or deletion of an extra row of zeros;
\item replacement of a matrix $C$ by ${\dis \left( \begin{array}{cc}
C & \zero \\ \zero & 1 \end{array} \right),}$ or the reverse;
\item permutation of rows or columns;
\item addition of a left (resp., right) scalar multiple of a row (resp., column) to another row (resp., column).
\end{enumerate}
\end{theorem}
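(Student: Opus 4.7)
The ``if'' direction is routine: each of the four operations preserves the row space of the presentation up to an automorphism of $R^n,$ so preserves the cokernel $M_A.$ For (1), a zero row contributes no relation. For (2), the added column corresponds to a new generator that is killed by the added row, so the module is unchanged. For (3), permuting rows reorders the relations, while permuting columns reorders the generators. For (4), a transvection row operation replaces a relation by an equivalent one, and a transvection column operation applies an automorphism of $R^n.$ Each operation is manifestly invertible by one of the same type.

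For the substantive ``only if'' direction, suppose $A$ is $m \times n,$ $B$ is $m' \times n',$ and fix an isomorphism $f : M_A \to M_B.$ Lift $f$ to an $n \times n'$ matrix $P$ representing the map $R^n \to R^{n'}$ on generators, and lift $f^{-1}$ similarly to an $n' \times n$ matrix $Q.$ Compatibility of $P$ with the relations of $A$ yields an $m \times m'$ matrix $X$ with $AP = XB,$ and symmetrically an $m' \times m$ matrix $Y$ with $BQ = YA.$ From $ff^{-1} = \mathrm{id}_{M_B}$ and $f^{-1}f = \mathrm{id}_{M_A}$ we obtain matrices $W$ and $Z$ satisfying
\[
PQ \;=\; I_n + WA \quad\text{and}\quad QP \;=\; I_{n'} + ZB.
\]

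Introduce the ``bridge'' matrix
\[
N \;=\; \begin{pmatrix} A & \zero \\ \zero & B \\ I_n & -P \\ -Q & I_{n'} \end{pmatrix}
\]
and prove that both $A$ and $B$ are related to $N$ by a sequence of moves (1)--(4); by transitivity this gives $A \sim B.$ To reduce $N$ to $A,$ first apply column operations of type (4), adding $q_{ij}$ times column $n+i$ to column $j$ for all $(i,j),$ which turns the bottom-left block $-Q$ into $\zero.$ The side effect on the remaining rows converts the middle $(\zero,B)$ block into $(YA,B)$ (since $BQ = YA$) and the $(I_n,-P)$ block into $(-WA,-P)$ (since $PQ = I_n + WA$). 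These side effects are then cleared by row operations of type (4), subtracting $Y$ and $-W$ times the top $(A,\zero)$ block from the middle and third blocks respectively. Finally the preserved $I_{n'}$ pivot block in the bottom is used via further row operations to zero out the residual $B$ and $-P$ entries; deleting the resulting zero rows by move (1) and collapsing the remaining $I_{n'}$ block by $n'$ iterations of (2)$^{-1}$ produces $A.$ The reduction of $N$ to $B$ is strictly symmetric, using the $(I_n,-P)$ block as the pivot and the identities $AP = XB$ and $QP = I_{n'} + ZB$ for the cleanup.

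The main obstacle is purely organizational: one must carefully track how each block-level column or row operation ripples across the other three blocks of $N,$ and verify at each stage that the identities among $P, Q, X, Y, W, Z$ are exactly the ones needed to eliminate the resulting cross-terms. The structure of the moves is dictated by these identities, and the definition of $N$ is chosen precisely so that both reductions close up using only the allowed transvection-type operations, never requiring the (disallowed) multiplication of a row or column by an invertible scalar.
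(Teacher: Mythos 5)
The paper offers no proof of this statement: it is quoted verbatim from Lickorish \cite[Theorem 6.1]{Lick}, so there is nothing internal to compare your argument against. On its own merits your proof is correct, and it is essentially the classical argument for invariance of presentation matrices under Tietze-type moves. The identities you extract from the isomorphism and its inverse --- $AP = XB,$ $BQ = YA,$ $PQ = I_n + WA,$ $QP = I_{n'} + ZB$ --- are exactly the right data, and the block computation with the bridge matrix $N$ closes up: the column operation is right multiplication by $\left( \begin{smallmatrix} I_n & \zero \\ Q & I_{n'} \end{smallmatrix} \right),$ after which the cross-terms $YA$ and $-WA$ are removed by left row multiples of the $(A \;\; \zero)$ block, the $I_{n'}$ pivot clears the second column block, and deletion of zero rows plus iterated reversal of move (2) yields $A;$ the reduction to $B$ is the mirror image. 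Your one departure from the textbook proof is the fourth block row $(I_n \;\; {-P}),$ which is redundant (its rows are left combinations of the other rows, via $(I_n \;\; {-P}) = -W(A \;\; \zero) - P(-Q \;\; I_{n'}),$ so it can be introduced and removed by moves (1) and (4)) but makes the two reductions strictly symmetric --- a harmless and rather clean choice. Two small points of care for the noncommutative setting: when you say ``adding $q_{ij}$ times column $n+i$ to column $j$'' you must mean the \emph{right} scalar multiple $(\mbox{col}_{n+i})\, q_{ij},$ as the theorem and the identities $BQ = YA,$ $PQ = I_n + WA$ require; and the opening claim that every move ``preserves the row space up to an automorphism of $R^n$'' is loose for moves (1) and (2), where the ambient free modules change, though your case-by-case sentences immediately afterward repair this.
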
	

Theorem~\ref{Lickorish}.(2) implies that for any $n \geq 1,$ the symbol associated to the identity matrix $\{ I_n \} = n \{ 1 \} = 0;$ any identity matrix $I_n$ corresponds to the finitely presented module $0.$ On the other hand, the value of the $m \times k$ zero matrix ${_m}\zero_k$ may be computed using Theorem~\ref{Lickorish}.(1),
$$\{ {_m}\zero_k \} = \{ {_k}\zero_k \} = k\{ 0 \};$$
the $1 \times 1$ matrix $\{ 0 \}$ correspondes to the finitely presented module ${_R}R.$ 

The theorem implies that the association $R \mapsto K_0 (\Rmod, \oplus)$ is functorial, for if $f : R \to S,$ is given, then one may easily verify that whenever two matrices $A$ and $A',$ with entries from $R,$ are equivalent, then so are the matrices $f(A)$ and $f(A')$ with entries from $S.$ The rule $\{ A \} \mapsto \{ f(A) \}$ from $K_0 (\Rmod, \oplus)$
to $K_0 (\Smod, \oplus)$ is therefore well-defined on the generators, and extends linearly to a morphism
$$K_0 (f) : K_0 (\Rmod, \oplus) \to K_0 (\Smod, \oplus)$$ of abelian groups.

\subsection{The Goursat Group}

The {\em Goursat group,} denoted by $G (R),$ is the free group on the elements of $\cup_{n \geq 1}  \; L_n (R),$ modulo the relations:
\begin{enumerate}
\item for every three matrices $A,$ $A'$ and $B$ with the same number of rows, 
$$[B, A \; | \; A'] - [B \; | \; A'] = 	[B, A' \; | \; A] - [B \; | \; A];$$
\item for $[B \; | \; A] \in L_m$ and $[B' \; | \; A'] \in L_n,$
$$\left[ \begin{array}{cc|cc}
B & \zero & A & \zero \\ \zero & B' & \zero & A' \end{array} \right] = [B \; | \; A] + [B' \; | \; A'];$$
\item for every $n \geq 1,$ $0_n = 0.$
\end{enumerate} 
The relations of the Goursat group represent in a formal way the group isomorphism that appears in a celebrated theorem of Goursat~\cite{Gou}. The definition of the Goursat group of $R$ is also functorial: if $f: R \to S$ is a morphism of rings, then the induced morphisms $L_n (f) : L_n (R) \to L_n (S),$ $n \geq 1,$ given by 
$[B \; | \; A] \mapsto [f(B) \; | \; f(A)]$ induce a well-defined function from the generators of $G(R)$ to $G(S),$ which extends linearly to a morphism $G(f) : G(R) \to G(S)$ of Goursat groups.

The $0${\em -Dimensional Goursat Group,} denoted by $G_0 (R),$ is the free group on unary matrix pairs 
$[B \; | \; A],$ the elements of $L_1 (R),$ modulo the relations:
\begin{enumerate}
\item if $A$ and $A'$ are column matrices, and all three matrices $A,$ $A',$ and $B$ have the same number of rows, then $$[B, A \; | \; A'] - [B \; | \; A'] = 	[B, A' \; | \; A] - [B \; | \; A];$$
\item $0_1 = 0.$
\end{enumerate}  
Like the definitions of $K_0 (\Rmod, \oplus)$ and $G(R),$ the definition of $G_0 (R)$ is also functorial. It is clear that the rule $[B \; | \; A] \mapsto [B \; | \; A],$ defined on the generators of $G_0 (R)$ with values in $G(R)$ is well-defined and respects the relations of $G_0 (R).$ It therefore extends linearly to a morphism 
$\iota_R : G_0 (R) \to G (R).$ If $f : R \to S$ is a morphism of rings, then the commutativity of the diagram
\DIAGV{80}
{G_0(R)} \n {\Ear{\iota_R}} \n {G(R)} \nn
{\Sar{G_0(f)}} \n {} \n {\saR{G(f)}} \nn
{G_0(S)} \n {\Ear{\iota_S}} \n {G(S)} 
\diag
is easily established. It shows that the class of morphisms $\iota_R : G_0 (R) \to G (R)$ constitutes a natural transformation $\iota : G_0 \to G$ of functors from the category $\Ring$ of associative rings with identity to the category $\bAb$ of abelian groups.
	
\subsection{The Natural Transformation $\grg : G \to K_0$}	
	
Let the function $$\grg' : \cup_{n \geq 1}  \; L'_n (R) \to K_0 (\Rmod, \oplus)$$ be defined by the rule
$(B \; | \; A) \mapsto \{ B, A \} - \{ B \}.$ In order to show that this function induces as a well-defined function on the generators of $G(R),$ we need the following lemma.

\begin{lemma} \label{elimination}
If $(B \; | \; A) \leq (B' \; | \; A')$ in $L'_n (R),$ then 
$$\left( \begin{array}{ccc}
B & A & \zero \\ \zero & \zero & B' \end{array} \right) \sim
\left( \begin{array}{ccc}
B & A & \zero \\ \zero & A' & B' \end{array} \right).$$
\end{lemma}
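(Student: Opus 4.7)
The plan is to convert the right-hand matrix into the left-hand matrix by a sequence of block row and column operations of the type allowed by Theorem~\ref{Lickorish}.(4), using the data $U$, $V$, $G$ provided by the hypothesis. By definition of $\leq_n$, these matrices satisfy $UB = B'V$ and $UA = A' + B'G$, and the strategy is to use each of the three matrices $G$, $U$, $V$ as the coefficient pattern for one block operation.

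Concretely, starting from the right-hand matrix, I would first add $G$ times the third column block to the second column block; since $A' + B'G = UA$, this changes the $(2,2)$-block from $A'$ to $UA$, producing $\left(\begin{array}{ccc} B & A & \zero \\ \zero & UA & B' \end{array}\right)$. Next, I would subtract $U$ times the first row block from the second row block: this uses $UA - UA = 0$ to clear the $(2,2)$-block entirely and turns the $(2,1)$-block into $-UB$. Finally, since $UB = B'V$, a block column operation adding $V$ times the third column block to the first clears the $(2,1)$-block, yielding the left-hand matrix $\left(\begin{array}{ccc} B & A & \zero \\ \zero & \zero & B' \end{array}\right)$.

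Each of these three block operations is a composition of elementary row or column operations in the sense of Theorem~\ref{Lickorish}.(4), one elementary operation per scalar entry of $G$, $U$, or $V$, and Theorem~\ref{Lickorish} permits arbitrary sequences of such operations. The only bookkeeping required is a dimensional check: writing $B$ as $m \times k$ and $B'$ as $m' \times k'$, one has $G$ of size $k' \times n$, $U$ of size $m' \times m$, and $V$ of size $k' \times k$, so each proposed block operation is of legitimate shape. There is no real obstacle here; the identities $UB = B'V$ and $UA = A' + B'G$ from the definition of $\leq_n$ are precisely what is needed to make the three operations succeed in sequence, so the proof is essentially a matter of recording the computation.
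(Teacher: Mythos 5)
Your proof is correct and is essentially the paper's own argument run in reverse: the paper starts from the left-hand matrix and applies the same three block operations (a row operation with $U$, then column operations with $V$ and $G$) to reach the right-hand matrix, whereas you start from the right-hand matrix and undo them in the opposite order. Since all operations in Theorem~\ref{Lickorish}.(4) are invertible and $\sim$ is symmetric, the two directions are interchangeable, so there is nothing substantive to distinguish the approaches.
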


\begin{proof} 
We are given $U,$ $V$ and $G$ such that $UB = B'V$ and $UA = A' + B'G.$ One obtains the following sequence (justified below) of equivalences of matrices: 
\begin{eqnarray*}
\left( \begin{array}{ccc}
B & A & \zero \\ \zero & \zero & B' \end{array} \right) & \sim &
\left( \begin{array}{ccc}
B & A & \zero \\ UB & UA & B' \end{array} \right) =
\left( \begin{array}{ccc}
B & A & \zero \\ B'V & UA & B' \end{array} \right)\\
& \sim &
\left( \begin{array}{ccc}
B & A & \zero \\ \zero & UA & B' \end{array} \right) =
\left( \begin{array}{ccc}
B & A & 0 \\ \zero & A' + B'G & B' \end{array} \right) \\
& \sim & \left( \begin{array}{ccc}
B & A & 0 \\ \zero & A' & B' \end{array} \right).
\end{eqnarray*}
The first equivalence is obtained by multiplying the top ''row'' on the left by $U$ and adding it to the second. The other equivalences are obtained by multiplying the third ''column'' on the right by $V$ and $G,$ respectively, then subtracting it from the first, respectively, second.
\end{proof}

The argument in the proof of Lemma~\ref{elimination} was already used in the proof of Theorem~\ref{modular}.

\begin{theorem} \label{gamma}
The rule $\grg' : \cup_{n \geq 1}  \; L'_n (R) \to K_0 (\Rmod, \oplus)$ induces a natural morphism
$\grg_R : G(R) \to K_0 (\Rmod, \oplus)$ of abelian groups, whose values on the generators of $G(R)$ are given by
$$\grg_R : [B \; | \; A] \mapsto \{ B, A \} - \{ B \}.$$
\end{theorem}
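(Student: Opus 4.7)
The plan is to establish the claim in three stages: first that $\grg'$ descends to a well-defined function on $L_n(R)$, then that the resulting map respects the three defining relations of the Goursat group $G(R)$, and finally that the assignment is natural in $R$. The hard part will be well-definedness; the remaining stages are largely formal.

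For well-definedness, I would take $(B \; | \; A) \approx_n (B' \; | \; A')$ and apply Lemma~\ref{elimination} in both directions. From $(B \; | \; A) \leq_n (B' \; | \; A')$ I obtain
$$\left( \begin{array}{ccc} B & A & \zero \\ \zero & \zero & B' \end{array} \right) \sim \left( \begin{array}{ccc} B & A & \zero \\ \zero & A' & B' \end{array} \right),$$
and from the reverse inequality, with the roles of the two pairs swapped,
$$\left( \begin{array}{ccc} B' & A' & \zero \\ \zero & \zero & B \end{array} \right) \sim \left( \begin{array}{ccc} B' & A' & \zero \\ \zero & A & B \end{array} \right).$$
The two right-hand sides are matched by simultaneously swapping the two row-blocks and the first and third column-blocks, hence represent the same class in $K_0(\Rmod, \oplus)$. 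Reading off the two block-diagonal left-hand sides via the direct-sum relation then yields $\{B, A\} + \{B'\} = \{B', A'\} + \{B\}$, and rearranging gives $\grg'(B \; | \; A) = \grg'(B' \; | \; A')$.

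Next I would check the three Goursat relations on the induced map. Relation~(1) reduces to the identity $\{B, A, A'\} = \{B, A', A\}$, valid by column permutation, with the other terms cancelling pairwise on the two sides. Relation~(2) follows from a column permutation that converts $\left( \begin{array}{cccc} B & \zero & A & \zero \\ \zero & B' & \zero & A' \end{array} \right)$ into the block-diagonal sum $(B, A) \oplus (B', A')$, combined with the direct-sum relation in $K_0$. For relation~(3), I would take the canonical representative $0_n = [{_n}\zero_1 \; | \; I_n]$ and observe that both $\{({_n}\zero_1, I_n)\}$ and $\{{_n}\zero_1\}$ equal $\{{_R}R\}$: the latter directly, since the zero map $R^n \to R$ has cokernel $R$; the former by iterated application of Theorem~\ref{Lickorish}(2), which strips off the diagonal $1$'s of $I_n$ one at a time. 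Hence $\grg_R(0_n) = 0$.

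Finally, naturality in $R$ is immediate on generators: for a ring morphism $f: R \to S$, the image of $[B \; | \; A]$ under both $K_0(f) \circ \grg_R$ and $\grg_S \circ G(f)$ equals $\{f(B), f(A)\} - \{f(B)\}$, so the naturality square commutes.
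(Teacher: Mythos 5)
Your proposal is correct and follows essentially the same route as the paper: Lemma~\ref{elimination} applied in both directions plus a block permutation for well-definedness, Theorem~\ref{Lickorish} for the three Goursat relations, and the evident check on generators for naturality. The only cosmetic difference is in handling $\grg_R(0_n)=0$, where you identify the presented modules directly while the paper reduces $\{ {_n}\zero_1, I_n \}$ to $\{0\}$ purely by Lickorish moves; both are valid since $\sim$ is defined by isomorphism of the presented modules.
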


\begin{proof}
To begin, let us show that $\grg_R$ is well-defined on the generators of $G(R).$ If 
$[B \; | \; A] = [B' \; | \; A']$ in $L_n (R),$ for some $n \geq 1,$ then 
$(B\; | \; A) \leq (B' \; | \; A')$ and $(B' \; | \; A') \leq (B \; | \; A)$ in $L'_n (R).$ One observes the following sequence of equivalences,
\begin{eqnarray*}
\left( \begin{array}{ccc}
B & A & \zero \\ \zero & \zero & B' \end{array} \right) & \sim & 
\left( \begin{array}{ccc}
B & A & \zero \\ \zero & A' & B' \end{array} \right) \\
& \sim & \left( \begin{array}{ccc}
B' & A' & \zero \\ \zero & A & B \end{array} \right) \\
& \sim & \left( \begin{array}{ccc}
B' & A' & \zero \\ \zero & \zero & B \end{array} \right),
\end{eqnarray*}
where the lemma is used to obtain the first and third equivalences, and Theorem~\ref{Lickorish}.(3) to obtain the second. Thus $\{ B, A \} + \{ B' \} = \{ B', A' \} + \{ B \}$ in the Grothendieck group $K_0 (\Rmod, \oplus).$ Whence
$$\grg_R ([B \; | \; A]) = \{ B,A \} - \{ B \} = \{ B', A' \} - \{ B' \} = \grg_R ([B' \; | \; A']).$$ 

Let us now verify that the relations of $G(R)$ are also respected by $\grg_R.$ If $\grg_R$ is applied to the first family of relations
$$[B, A_1 \; | \; A_2] - [B \; | \; A_2] = 	[B, A_2 \; | \; A_1] - [B \; | \; A_1],$$
then
$$\{ B, A_1, A_2 \} - \{ B, A_1 \} - (\{ B, A_2 \} - \{ B \}) =
\{ B, A_2, A_1 \} - \{ B, A_2 \} - (\{ B, A_1 \} - \{ B \}),$$
which holds in $K_0 (\Rmod, \oplus),$ by Theorem~\ref{Lickorish}.(3).

The second family of relations that hold in $G(R)$ are of the form
$$\left[ \begin{array}{cc|cc}
B & \zero & A & \zero \\ \zero & B' & \zero & A' 
\end{array} \right] = [B \; | \; A] + [B' \; | \; A'].$$
If $\grg_R$ is applied, then
$$\left\{ \begin{array}{cccc}
B & \zero & A & \zero \\ \zero & B' & \zero & A' 
\end{array} \right\} - 
\left\{ \begin{array}{cc}
B & \zero \\ \zero & B'  \end{array} \right\} = \{ B,A \} - \{ B \} + \{ B', A' \} - \{ B' \}.$$
But this clearly holds in $K_0 (\Rmod, \oplus),$ because
$$\left\{ \begin{array}{cccc}
B & \zero & A & \zero \\ \zero & B' & \zero & A' 
\end{array} \right\} = \{ B, A \} + \{ B', A' \}  \mbox{  and  } 
\left\{ \begin{array}{cc} B & \zero \\ \zero & B'  \end{array} \right\} = \{ B \} + \{ B' \}.$$

Finally, to see that $\grg_R : 0_n \mapsto 0,$ recall that $0_n = [{_n}\zero_1 \; | \; I_n].$ Thus
$$\grg_R (0_n) = \{ {_n}\zero_1, I_n \} - \{ {_n}\zero_1  \}.$$
Theorem~\ref{Lickorish}.(2) implies that $\{ {_n}\zero_1, I_n \} = \{ 0, 1 \},$ while Theorem~\ref{Lickorish}.(1) shows that 
$\{ {_n}\zero_1  \} = \{ 0 \}.$ But
$$\{ 0, 1 \} = 
\left\{ \begin{array}{cc}
0 & 1 \\ 0 & 0 \end{array} \right\} =
\left\{ \begin{array}{cc}
0 & 0 \\ 0 & 1 \end{array} \right\} = \{ 0 \},$$
by (1), (3), and (2), respectively, of Theorem~\ref{Lickorish}.
\end{proof}

Let us observe that if $A$ is an $m \times n$ matrix, then $\grg_R ([A]) = \{ A \}.$ For,
$$\grg_R  ([A]) = \grg  ([{_m}\zero_1 \; | \; A]) = \{ {_m}\zero_1 , A \} - \{ {_m}\zero_1 \}.$$ But 
$$\{ {_m}\zero_1, A \} = \left\{ \begin{array}{cc} 
0 & \zero \\ {_m}\zero_1 & A \end{array} \right\} = \{ 0 \} + \{ A \} = \{ {_m}\zero_1 \} + \{ A \},$$
by two applications of Theorem~\ref{Lickorish}.(1).

\subsection{The Natural Transformation $\grk : K_0 \to G_0$}

Define the function $\grk'$ on the collection of all matrices with entries from $R$ to $G_0 (R)$ by the rule
\begin{eqnarray*}
A = (A_1, A_2, \ldots, A_n) & \mapsto & \sum_{i=1}^n \; [A_1, \ldots, A_{i-1} | A_i] \\
& = & [A_1] + [A_1 \; | A_2] + \cdots + [A_1, \ldots, A_{n-2} | A_{n-1}] + [A_1, \ldots, A_{n-1} | A_n].
\end{eqnarray*}
In order to show that this function induces as a well-defined rule on the generators of $K_0 (\Rmod, \Ab),$ we need to
verify that it is invariant under the four operations cited in Theorem~\ref{Lickorish}. We shall use the observation that
$$\grk'_R (A) = \grk'_R (A_1, A_2, \ldots, A_n) =  
\grk'_R (A_1, A_2, \ldots, A_{n-1}) + [A_1, A_2, \ldots, A_{n-1} \; | \; A_n].$$

\noindent {\bf (1) Addition or deletion of an extra row of zeros.} Suppose that $A'$ is obtained from $A$ by adding an extra row of zeros. Then, for each $i,$
$$[A'_1, \ldots, A'_{i-1} | A'_i] = 
\left[ \begin{array}{ccc|c} A_1, & \ldots, & A_{i-1} & A_i \\
0, & \zero, & 0 & 0 \end{array} \right] = [A_1, \ldots, A_{i-1} | A_i]$$
in $L_1 (R),$ by the commentary following Lemma~\ref{more rows}. Consequently,
$$\grk'_R (A) = \sum_{i=1}^n \; [A_1, \ldots, A_{i-1} | A_i] = \sum_{i=1}^n \; [A'_1, \ldots, A'_{i-1} | A'_i] = 
\grk'_R (A').$$

\noindent {\bf (2) Replacement of a matrix $C$ by ${\dis \left( \begin{array}{cc}
C & \zero \\ \zero & 1 \end{array} \right),}$ or the reverse.} Simply note that
\begin{eqnarray*}
\grk'_R \left( \begin{array}{cc} C & \zero \\ \zero & 1 \end{array} \right) & = & 
\grk'_R \left( \begin{array}{c} C  \\ \zero  \end{array} \right) + 
\left[ \begin{array}{c|c} C & \zero \\ \zero & 1 \end{array} \right] \\
& = &  \grk'_R (C) + ([C \; | \; \zero] \wedge [\zero \; | \; 1]) \\
& = &  \grk'_R (C) + (1_1 \wedge 0_1) = \grk'_R (C) + 0_1 = \grk'_R (C),
\end{eqnarray*}
which uses the relation $0_1 = 0$ in $G_0 (R).$\smallskip

\noindent {\bf (3) Permutation of rows or columns.} Suppose that $A'$ is obtained from $A$ by permutation of rows. Then $A' = PA$ for some permutation, hence invertible, matrix $P,$ and so
$$\grk'_R (A') = \sum_{i=1}^n \; [PA_1, \ldots, PA_{i-1} | PA_i] = \sum_{i=1}^n \; [A_1, \ldots, A_{i-1} | A_i] = 
\grk'_R (A),$$ by RoD (1) with $U = P.$

Suppose, on the other hand, that $A'$ is obtained from $A$ by permutation of columns. The {\em symmetric group} $S_n$ on $n$ elements is generated by consecutive transpositions, so it suffices to prove the claim in case $A'$ is obtained from $A$ by permuting consecutive columns $j - 1$ and $j,$ where $1 < j \leq n.$ Let us consider a typical summand 
$$[A_1, \ldots, A_{i-1} | A_i]$$ of $\grk'_R (A).$ If $i > j,$ then a transposition of the $j$-th and $(j-1)$-th columns
will not affect the matrix pair, because it is the result of multiplying the left matrix by a permutation matrix $Q$ on the right. But this leaves the matrix pair invariant, by RoD (2) with $V = Q.$ Also, it is immediate that if $i < j-1,$ then transposing the $j$-th and $(j-1)$-th columns has no effect. The remaining two cases are $i = j-1 $ and $i = j,$ so we need to show that
$$[A_1, \ldots, A_{j-2} | A_{j-1}] + [A_1, \ldots, A_{j-1} | A_j] = 
[A_1, \ldots, A_{j-2} | A_j] + [A_1, \ldots, A_{j-2}, A_j | A_{j-1}]$$
holds in $G_0 (R).$ But
$$[A_1, \ldots, A_{j-1} | A_j] - [A_1, \ldots, A_{j-2} | A_j] = 
[A_1, \ldots, A_{j-2}, A_j | A_{j-1}] - [A_1, \ldots, A_{j-2} | A_{j-1}]$$
is an instance of Relation (1) in the definition of $G_0 (R).$ \smallskip

\noindent {\bf (4) Addition of a left (resp., right) scalar multiple of a row (resp., column) to another row (resp., column).} Suppose that $A'$ is obtained from $A$ by adding a left scalar multiple of a row to another row. Then $A' = PA$ for some elementary, hence invertible, matrix $P.$ Then $\grk'_R (A') = \grk'_R (A),$ just as in the case when $A'$ is obtained from $A$ by permutation of rows.

So suppose that $A'$ is obtained from $A$ by adding a right scalar multiple of a column to another column. To prove that $\grk'_R (A') = \grk'_R (A),$ we are free, by ($3$) above to permute the columns of $A'$ and assume, that it is a right scalar multiple $A_jr$ for $j < n$ that is being added to $A_n,$ the last column, to obtain 
$$\grk'_R (A') = [A_1, \ldots, A_{n-1} | A_n + A_jr] + \sum_{i<n} \; [A_1, \ldots, A_{i-1} | A_i].$$ 
All that remains to be shown is that $[A_1, \ldots, A_{n-1} | A_n + A_jr] = [A_1, \ldots, A_{n-1} | A_n],$ which follows from RoD (3) with $G$ the $(n-1) \times 1$ column matrix whose only nonzero entry $r$ is in the $j$-th row.
 
\begin{theorem} \label{kappa}
The rule $\grk' : A \mapsto \sum_{i=1}^n [A_1, \ldots, A_{i-1} | A_i]$ induces a natural morphism \linebreak
$\grk_R : K_0 (\Rmod, \oplus) \to G_0 (R)$ of abelian groups, whose values on the generators of \linebreak $K_0 (\Rmod, \oplus)$ are given by $\grk_R : \{ A \} \mapsto \sum_{i=1}^n [A_1, \ldots, A_{i-1} | A_i].$
\end{theorem}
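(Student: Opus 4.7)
\textbf{Plan of proof of Theorem~\ref{kappa}.} The four cases (1)--(4) of Theorem~\ref{Lickorish} verified in the four paragraphs preceding the statement show that $\grk'(A) = \grk'(A')$ whenever $A \sim A'$, so $\grk'$ already descends to a well-defined function on the generators $\{A\}$ of $K_0(\Rmod,\oplus)$. To complete the argument two things remain: promote this function to a group homomorphism by verifying the block-sum relation $\{A \oplus B\} = \{A\} + \{B\}$, and check that the resulting assignment is natural in $R$.

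For the block-sum relation, index the columns of $\left( \begin{array}{cc} A & \zero \\ \zero & B \end{array} \right)$ as $C_1,\ldots,C_{n+m}$ ($A$ with $n$ columns, $B$ with $m$), and split the defining sum of $\grk'$ into the first $n$ and the last $m$ summands. Each of the first $n$ summands has the form $\left[ \begin{array}{c|c} A_1,\ldots,A_{i-1} & A_i \\ \zero & \zero \end{array} \right]$, which collapses to $[A_1,\ldots,A_{i-1}\;|\;A_i]$ by the commentary following Lemma~\ref{more rows} on deletion of zero rows. For the last $m$ summands, I would first record the auxiliary identity
\[
\left[ \begin{array}{cc|c} C & \zero & \zero \\ \zero & B' & A' \end{array} \right] = [B'\;|\;A'],
\]
proved as follows: the direction $\leq$ applies RoD~(1) with $U=(\zero,I)$ to obtain $[\zero,B'\;|\;A']$, then uses Lemma~\ref{more columns} (combined with a column permutation as in Corollary~\ref{invertible}.(2)) to delete the zero columns; the direction $\geq$ applies RoD~(1) with $U = \left(\begin{array}{c}\zero\\I\end{array}\right)$, $V = \left(\begin{array}{c}\zero\\I\end{array}\right)$, $G = \zero$. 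Applied with $C = A$, $B' = (B_1,\ldots,B_{j-1})$, and $A' = B_j$, this identity rewrites the $(n+j)$-th summand as $[B_1,\ldots,B_{j-1}\;|\;B_j]$. Summing the two pieces yields $\grk'_R(A) + \grk'_R(B)$, so the function on generators respects the defining relation of $K_0(\Rmod,\oplus)$ and extends uniquely, by the universal property of the quotient, to a group homomorphism $\grk_R$.

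For naturality, let $f : R \to S$. Since $K_0(f)$ and the $L_n(f)$'s that define $G_0(f)$ are all given by entrywise application of $f$, the computation
\[
\grk_S(K_0(f)(\{A\})) = \sum_i [f(A_1),\ldots,f(A_{i-1})\;|\;f(A_i)] = G_0(f)(\grk_R(\{A\}))
\]
holds on each generator $\{A\}$, which gives the naturality square.

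The only substantive step is the auxiliary block-simplification identity used to collapse the $(n+j)$-th summands; everything else is a routine unpacking of definitions together with the Rules of Divisibility and Lemmata~\ref{more rows}--\ref{more columns}, so I do not expect a genuine obstacle.
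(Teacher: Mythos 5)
Your proposal is correct and follows essentially the same route as the paper: the paper likewise treats the four Lickorish operations as already verified before the theorem and reduces the proof to the block-sum relation, splitting the sum into the $A$-summands (collapsed by deleting zero rows) and the $B$-summands. The only cosmetic difference is that the paper simplifies the $(n+j)$-th summand by recognizing it via Proposition~\ref{infimum} as the infimum $[A \;|\; \zero] \wedge [B_1,\ldots,B_{j-1} \;|\; B_j]$ with the maximum element $[A \;|\; \zero]=1$, whereas you prove the same collapse directly from the Rules of Divisibility.
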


\begin{proof}
All that remains to be verified is that the relation
$$\left\{ \left( \begin{array}{cc}
A & \zero \\ \zero & B \end{array} \right) \right\} = \{ A \} + \{ B \},$$
which holds in $K_0 (\Rmod, \oplus)$ is respected by $\grk_R.$ But if one applies $\grk_R,$ then
\begin{eqnarray*}
\grk_R : \left\{ \left( \begin{array}{cc}
A & \zero \\ \zero & B \end{array} \right) \right\} & \mapsto & 
\sum_{i=1}^n \left[ \begin{array}{ccc|c} 
A_1, & \ldots, & A_{i-1} & A_i \\ \zero & \ldots & \zero & \zero \end{array} \right]  
+ \sum_{j=1}^k \left[ \begin{array}{cccc|c} A & \zero & \ldots & \zero & \zero \\ 
\zero & B_1, & \ldots, & B_{j-1} & B_j 
\end{array} \right] \\
& = & \sum_{i=1}^n [A_1, \ldots, A_{i-1} \; | \; A_i]  +  
([A \; | \; \zero] \wedge \sum_{j=1}^k [B_1, \ldots, B_{j-1} \; | \;  B_j]) \\
& = & \grk_R (\{ A \}) + \grk_R (\{ B \}).
\end{eqnarray*}
\end{proof}

\subsection{The Isomorphism Theorem}

For every ring $R,$ we have thus obtained a triangle of natural morphisms
\DIAGV{70}
{G_0 (R)} \n {} \n {\Earv{\iota_R}{120}} \n {} \n {G (R)} \nn 
{} \n {\nwaR{\grk_R}} \n {} \n {\swaR{\grg_R}} \nn
{} \n {} \n {K_0 (\Rmod, \oplus)}
\diag
We will prove that each of these morphisms is an isomorphism, by showing that the composition of all three, regardless of where one begins, yields the identity.\bigskip

\noindent (1) Suppose one begins at $K_0 (\Rmod, \oplus).$ The composition of all three morphisms takes a generator $\{ A \}$ to
\begin{eqnarray*}
\{ A \} & \stackrel{\grk_R}{\mapsto} & \sum_{i=1}^n \; [A_1, \ldots, A_{i-1} \; | \; A_i] \\
& \stackrel{\gri_R}{\mapsto} & \sum_{i=1}^n \; [A_1, \ldots, A_{i-1} \; | \; A_i] \\
& \stackrel{\grg_R}{\mapsto} & \sum_{i=1}^n \; (\{ A_1, \ldots, A_{i-1}, A_i \} -  \{ A_1, \ldots, A_{i-1} \}) \\
& = & \{ A_1, \ldots, A_{n-1}, A_n \} = \{ A \}.
\end{eqnarray*}
The composition $\grg_R \circ \gri_R \circ \grk_R$ is therefore the identity on $K_0 (\Rmod, \oplus).$
\bigskip

\noindent (2) Suppose one begins at $G_0 (R).$ The composition of all three morphisms takes a generator $[B \; | \; A]$
(so that $A$ is a column matrix) to
\begin{eqnarray*}
[B \; | \; A] & \stackrel{\gri_R}{\mapsto} & [B \; | \; A] \\
& \stackrel{\grg_R}{\mapsto} & \{ B, A \} - \{ B \} \\
& \stackrel{\grk_R}{\mapsto} & \sum_{j=1}^k [B_1, \ldots, B_{j-1} \; | \; B_j] + [B \; | \; A] -
\sum_{j=1}^k [B_1, \ldots, B_{j-1} \; | \; B_j] \\
& = & [B \; | \; A].
\end{eqnarray*}
The composition $\grk_R \circ \grg_R \circ \gri_R$ is therefore the identity on $G_0 (R).$\bigskip

To prove the same about the Goursat group $G (R),$ we shall need the following lemma.

\begin{lemma}
Let $A,$ $A'$ and $B$ be matrices with the same number of rows. Then
$$[B \; | \; A, A'] = [B, A \; | A'] + [B \; | \; A]$$
holds in $G(R).$ 
\end{lemma}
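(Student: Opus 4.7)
The plan is to derive the identity from a single application of relation~(1) of $G(R)$, followed by two matrix-pair simplifications and one application of relation~(2); no induction is needed.

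I would first apply relation~(1) with the substitution $A \mapsto A$ and $A' \mapsto (A, A')$ (horizontal concatenation). Since relation~(1) is stated for arbitrary matrices $A, A'$ with the same number of rows as $B,$ this is legitimate and produces
\[
[B, A \mid A, A'] - [B \mid A, A'] = [B, A, A' \mid A] - [B \mid A],
\]
which rearranges to $[B \mid A, A'] = [B, A \mid A, A'] - [B, A, A' \mid A] + [B \mid A].$ Next, I would simplify the two ``new'' terms using matrix-pair equalities. Proposition~\ref{maximum} gives $[B, A, A' \mid A] = 1_n,$ since $A = (B, A, A')(0_{k \times n}, I_n, 0_{n' \times n})^{\tr}.$ For $[B, A \mid A, A'],$ Corollary~\ref{invertible}.(3) applied with the block matrix
${\dis G = \left( \begin{array}{cc} 0_{k \times n} & 0_{k \times n'} \\ -I_n & 0_{n \times n'} \end{array} \right)}$
subtracts the ``$A$-block'' of the left matrix from the first $n$ columns of the right matrix, giving $[B, A \mid A, A'] = [B, A \mid 0_n, A'].$

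The crux of the argument would be the auxiliary identity: for any matrix $C$ with the same row count as $A',$
\[
[C \mid 0_n, A'] = 1_n + [C \mid A'] \quad \text{in } G(R).
\]
By relation~(2) applied to $[1 \mid 0_{1 \times n}] \in L_n(R)$ and $[C \mid A'] \in L_{n'}(R),$ the right-hand side equals the explicit block-diagonal matrix pair
\[
\left[ \begin{array}{cc|cc} 1 & 0 & 0_{1 \times n} & 0 \\ 0 & C & 0 & A' \end{array} \right] \in L_{n+n'}(R),
\]
and I would finish by verifying that this block-diagonal pair coincides, as an element of $L_{n+n'}(R),$ with $[C \mid 0_n, A'].$ In both directions of $\leq_{n+n'}$ one may take $G = 0$; for the forward direction $U$ is the projection $(0_{r \times 1}, I_r)$ onto the lower $r$ rows (with $V = (0_{(k+n) \times 1}, I_{k+n})$), and for the reverse direction one takes its transpose as the embedding. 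Once this auxiliary identity is in hand, substituting $[B, A, A' \mid A] = 1_n$ and $[B, A \mid 0_n, A'] = 1_n + [B, A \mid A']$ into the equation from relation~(1) causes the two copies of $1_n$ to cancel, yielding the desired identity $[B \mid A, A'] = [B, A \mid A'] + [B \mid A].$

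The most delicate point is the auxiliary identity: it is the only step that invokes relation~(2), and it requires carefully exhibiting the witnessing matrices $U, V$ on both sides of $\leq_{n+n'}.$ The rest of the argument is a mechanical substitution once the correct substitution in relation~(1) has been identified.
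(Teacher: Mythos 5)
Your argument is correct, but it takes a genuinely different route from the paper's. The paper also begins with a single instance of Relation (1) of $G(R)$, but with the augmented substitution $B \mapsto \bigl(\begin{smallmatrix} B \\ \zero\end{smallmatrix}\bigr)$, $A \mapsto \bigl(\begin{smallmatrix} A & A' \\ \zero & I_n\end{smallmatrix}\bigr)$, $A' \mapsto \bigl(\begin{smallmatrix}\zero \\ I_n\end{smallmatrix}\bigr)$: the identity blocks are engineered so that each of the four resulting terms collapses inside its own lattice $L_m(R)$ --- the maxima $1_m$ only ever appear inside infima, where they are absorbed, and the unwanted summands reduce to $0_m$, which Relation (3) kills. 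Your instance, with the bare substitution $A' \mapsto (A,A')$, is the more natural one to write down, but it produces the genuine summand $[B,A,A' \mid A] = 1_n$, and since $1_n \neq 0$ in $G(R)$ in general (its image under $\grg_R$ is $n\{0\}$, the class of $R^n$), you cannot simply discard it; your auxiliary identity $[C \mid \zero_n, A'] = 1_n + [C \mid A']$ is precisely what manufactures a second copy of $1_n$ to cancel the first, and your explicit witnesses for both directions of that identity check out (only note that the $V$ in your forward direction should be $(\,{_k}\zero_1, I_k)$, of size $k \times (1+k)$, not $(k+n)$-sized). The trade-off is clear: the paper front-loads the cleverness into the choice of substitution and then needs only lattice identities in each $L_m(R)$, whereas your version uses the obvious substitution at the cost of one extra Relation-(2) computation and the observation that the two maxima cancel rather than vanish.
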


\begin{proof}
Let us suppose that $A'$ has $n$ columns. An instance of Relation (1) of the Goursat group is given by
$$\left[\begin{array}{ccc|c} B & A & A' & \zero \\ \zero & \zero & I_n & I_n \end{array}\right] -
\left[\begin{array}{c|c} B & \zero \\ \zero & I_n \end{array} \right] =
\left[\begin{array}{cc|cc} B & \zero & A & A' \\ \zero & I_n & \zero & I_n \end{array}\right] -
\left[\begin{array}{c|cc} B & A & A' \\ \zero & \zero & I_n \end{array} \right]$$
holds in $G(R).$ These terms simplify as follows.
$$\left[\begin{array}{ccc|c} B & A & A' & \zero \\ \zero & \zero & I_n & I_n \end{array}\right] =
\left[\begin{array}{ccc|c} B & A & \zero & -A' \\ \zero & \zero & I_n & I_n \end{array}\right] =
[B, A \; | \; -A'] \wedge [I_n \; | \; I_n],$$
where the first equality follows by multiplying the bottom ''row'' by $A'$ and subtracting it from the top ''row.'' Since $[I_n \; | \; I_n] = 1_n,$ this first term of the relation simplifies to 
$[B,A \; | \; -A'] = [B,A \; | A'],$ which follows by RoD (1) and (2) with $U$ and $V$ matrices of the form $-I_m,$ for appropriate $m.$ Then
$$\left[\begin{array}{c|c} B & \zero \\ \zero & I_n \end{array} \right] = 
[B \; | \; \zero] \wedge [\zero \; | \; I_n] = 1_n \wedge 0_n = 0_n = 0$$
and
$$\left[\begin{array}{cc|cc} B & \zero & A & A' \\ \zero & I_n & \zero & I_n \end{array}\right] =
[B \; | \; A,A'] \wedge [I_n \; | \; \zero, I_n] = [B \; | \; A,A'].$$
The last term of the relation simplifies to
$$\left[\begin{array}{c|cc} B & A & A' \\ \zero & \zero & I_n \end{array} \right] = 
\left[\begin{array}{c|cc} B & A & \zero \\ \zero & \zero & I_n \end{array} \right] =
\left[\begin{array}{cc|cc} B & \zero & A & \zero \\ \zero & \zero & \zero & I_n \end{array} \right]
= [B \; | \; A] + [\zero \; | \; I_n],$$
where the first equality follows by multiplying the bottom ''row'' by $A'$ and subtracting it from the top ''row;'' the second equality from Lemma~\ref{more columns}; and the third from the definition of $G (R).$ Since 
$[\zero \; | \; I_n] = 0_n = 0$ in $G(R),$ we see that the above instance of Relation (1) of $G(R)$ simplifies to
$$[B, A \; | \; A'] = [B \; | \; A, A'] - [B \; | \; A].$$
\end{proof}

\begin{theorem} \label{triangle}
All three morphisms $\iota_R,$ $\grg_R,$ and $\grk_R$ are isomorphisms. Moreover, if one begins at any of the three abelian groups, then the composition obtained by going once around the triangle
\DIAGV{70}
{G_0 (R)} \n {} \n {\Earv{\iota_R}{120}} \n {} \n {G (R)} \nn 
{} \n {\nwaR{\grk_R}} \n {} \n {\swaR{\grg_R}} \nn
{} \n {} \n {K_0 (\Rmod, \oplus)}
\diag 
yields the identity morphism.
\end{theorem}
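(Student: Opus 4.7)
The plan is to combine the two computations already displayed in parts (1) and (2) preceding the statement, namely $\grg_R\circ\iota_R\circ\grk_R=\mathrm{id}_{K_0(\Rmod,\oplus)}$ and $\grk_R\circ\grg_R\circ\iota_R=\mathrm{id}_{G_0(R)}$, with a verification of the third cyclic composition $\iota_R\circ\grk_R\circ\grg_R=\mathrm{id}_{G(R)}$. Once all three round trips around the triangle evaluate to the identity, each of the three morphisms has both a left inverse and a right inverse (each being the composition of the other two, taken in the two possible orders); the two one-sided inverses must then agree, so each morphism is an isomorphism whose inverse is precisely given by the appropriate composition of the other two.

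To evaluate the remaining composition on a generator $[B\;|\;A]$ of $G(R)$, I first apply $\grg_R$ to obtain $\{B,A\}-\{B\}$ in $K_0(\Rmod,\oplus)$. Writing $B=(B_1,\ldots,B_k)$ and $A=(A_1,\ldots,A_n)$, the explicit formula for $\grk_R$ applied to the concatenated matrix $(B,A)=(B_1,\ldots,B_k,A_1,\ldots,A_n)$ produces $\grk_R(\{B\})$ as its first $k$ summands, so these cancel and one is left with
$$\grk_R\bigl(\grg_R([B\;|\;A])\bigr)=\sum_{i=1}^{n}[B,A_1,\ldots,A_{i-1}\;|\;A_i]$$
in $G_0(R)$, which $\iota_R$ then carries verbatim into $G(R)$. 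Thus the desired identity $\iota_R\circ\grk_R\circ\grg_R=\mathrm{id}_{G(R)}$ reduces to the telescoping statement
$$[B\;|\;A]=\sum_{i=1}^{n}[B,A_1,\ldots,A_{i-1}\;|\;A_i]\quad\text{in }G(R).$$

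The telescoping identity is precisely what the lemma stated just before the theorem is designed to deliver. That lemma asserts $[B\;|\;A,A']=[B,A\;|\;A']+[B\;|\;A]$; a straightforward induction on $n$, applying the lemma once to peel the column $A_n$ off the right-hand matrix at each step, converts $[B\;|\;A_1,\ldots,A_n]$ into the claimed sum. This induction is the only substantive step left, and since the lemma has already been established through a single instance of Relation~(1) of $G(R)$ together with the simplifications afforded by Lemma~\ref{more columns} and the vanishing relations $[I_n\;|\;I_n]=1_n$ and $[\zero\;|\;I_n]=0_n=0$, the main obstacle is really just the bookkeeping that packages the lemma into its telescoping form. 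Once this is in hand, the formal consequence discussed in the first paragraph completes the proof that $\iota_R$, $\grg_R$ and $\grk_R$ are mutually inverse isomorphisms.
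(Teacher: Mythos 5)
Your proposal is correct and follows essentially the same route as the paper: the only remaining content is the verification that $\iota_R\circ\grk_R\circ\grg_R$ is the identity on $G(R)$, which you carry out exactly as the paper does, by computing $\grk_R(\{B,A\}-\{B\})=\sum_{i=1}^{n}[B,A_1,\ldots,A_{i-1}\;|\;A_i]$ after cancellation and then telescoping via the lemma $[B\;|\;A,A']=[B,A\;|\;A']+[B\;|\;A]$. The only nitpick is the parenthetical ``taken in the two possible orders'': for each morphism the left and the right inverse are the \emph{same} composite of the other two (e.g.\ $\kappa_R\circ\grg_R$ serves as both one-sided inverses of $\iota_R$), but this does not affect the argument.
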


\begin{proof}
All that remains is to verify that the composition $\gri_R \circ \grk_R \circ \grg_R$ is the identity on $G (R).$
Let $[B \; | \; A]$ be a generator of $G (R).$ First, $\grg_R ([B \; | \; A]) = \{ B, A \} - \{ B \}.$ Then 
\begin{eqnarray*}
\grk_R (\{ B, A \} - \{ B \}) & = &  \sum_{j=1}^k \; [B_1, \ldots, B_{j-1} \; | \; B_j] +
\sum_{i=1}^n \; [B, A_1, \ldots, A_{i-1} \; | \; A_i] \\
&  & \;\;\;\;\; - \sum_{j=1}^k \; [B_1, \ldots, B_{j-1} \; | \; B_j] \\
& = & \sum_{i=1}^n [B, A_1, \ldots, A_{i-1} \; | \; A_i].
\end{eqnarray*}
By the lemma, this is equal to the telescoping sum
$$[B \; | \; A_1] + \{ \sum_{i=2}^n \; ([B \; | \; A_1, \ldots, A_{i-1}, \; A_i] - [B \; | \; A_1, \ldots, A_{i-1}]) \}  = [B \; | \; A].$$
\end{proof}

\subsection{The Positive Cone} A {\em pre-ordered abelian group} $(\grG, \leq)$ consists of an abelian group $\grG$ equipped with a pre-order $\leq$ that satisfies 
$$\grG \models \; \forall x,y,z \; [(x \leq y) \to (x+z \leq y+z)].$$
The {\em positive cone} of $(\grG, \leq)$ is the subset $\grG^+ = \{ x \in \grG \; | \; x \geq 0 \};$ it is an additively closed subset and contains $0 \in \grG^+.$ One may also define the pre-order $\leq$ in terms of the positive cone by $$x \leq y \;\; \mbox{if and only if} \;\; y - x \in \grG^+.$$ The two properties of the positive cone ensure that the relation $\leq$ so defined is a pre-order compatible with the abelian group structure on $\grG.$

The Goursat group $G(R)$ may be equipped with a pre-order whose positive cone $G^+(R)$ is given by the subset of elements having the form
$$[B' \; | \; A'] - [B \; | \; A],$$
where $[B \; | \; A] \leq_n [B' \; | \; A']$ in some $L_n (R),$ $n \geq 1.$ That $G^+(R) \subseteq G(R)$ is additively closed is a consequence of the following lemma, whose proof is left to the reader.

\begin{lemma}
If $(B \; | \; A) \leq_m (B' \; | \; A')$ in $L'_m (R)$ and $(C \; | \; D) \leq_n (C' \; | \; D')$ in $L'_n (R),$ then 
$$\left(\begin{array}{cc|cc} B & \zero & A & \zero \\ \zero & C & \zero & D \end{array} \right) \leq_{m+n}
\left(\begin{array}{cc|cc} B' & \zero & A' & \zero \\ \zero & C' & \zero & D' \end{array} \right)$$
in $L'_{m+n}(R).$
\end{lemma}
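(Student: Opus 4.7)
The plan is to unpack the hypotheses using the definition of $\leq$ and then simply assemble the witnessing matrices in block diagonal form. By hypothesis there exist matrices $U_1, V_1, G_1$ of appropriate sizes with
$$U_1 B = B' V_1 \quad \text{and} \quad U_1 A = A' + B' G_1,$$
and matrices $U_2, V_2, G_2$ with
$$U_2 C = C' V_2 \quad \text{and} \quad U_2 D = D' + C' G_2.$$

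\noindent I would then define the block diagonal matrices
$$U := \left(\begin{array}{cc} U_1 & \zero \\ \zero & U_2 \end{array}\right), \quad
V := \left(\begin{array}{cc} V_1 & \zero \\ \zero & V_2 \end{array}\right), \quad
G := \left(\begin{array}{cc} G_1 & \zero \\ \zero & G_2 \end{array}\right),$$
and verify directly by block matrix multiplication that
$$U \cdot \left(\begin{array}{cc} B & \zero \\ \zero & C \end{array}\right) = \left(\begin{array}{cc} B' & \zero \\ \zero & C' \end{array}\right) \cdot V$$
and
$$U \cdot \left(\begin{array}{cc} A & \zero \\ \zero & D \end{array}\right) = \left(\begin{array}{cc} A' & \zero \\ \zero & D' \end{array}\right) + \left(\begin{array}{cc} B' & \zero \\ \zero & C' \end{array}\right) \cdot G.$$
Each of these reduces, block by block, to one of the four given equations. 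By the definition of $\leq_{m+n}$, this establishes the claim.

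\noindent The argument has no real obstacle; the only point to check is that the dimensions line up so that the block diagonal constructions make sense, which they do because $U_i, V_i, G_i$ were chosen to be compatible with the blocks $(B \; | \; A)$ and $(C \; | \; D)$ respectively. Alternatively, one could derive the statement as a consequence of Proposition~\ref{infimum} applied twice, observing that the displayed pair on the left is the infimum $[B \; | \; A] \wedge [C \; | \; D']$ up to a permutation of columns and of the variables (after padding with zero columns so that both pairs live in $L_{m+n}(R)$), and that $\wedge$ is monotone in each argument; but the direct block-matrix computation is cleaner and more elementary.
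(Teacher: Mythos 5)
Your block-diagonal construction of $U$, $V$, $G$ is correct, and the two required identities do reduce blockwise to the four hypothesized equations exactly as you say; the paper leaves this proof to the reader, and this is plainly the intended argument. (Minor slip in your aside only: the infimum should read $[B \; | \; A] \wedge [C \; | \; D]$, not $[C \; | \; D']$, after padding each pair with zero columns to land in $L_{m+n}(R)$.)
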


\noindent The pre-order that arises on $G^+ (R)$ is the least pre-order such that for every $n \geq 1,$ if 
$[B \; | \; A] \leq_n [B' \; | \; A']$ in $L_n (R),$ then $[B \; | \; A] \leq [B' \; | \; A']$ in $G(R).$ The isomorphism $\grg_R: G(R) \to K_0(\Rmod, \oplus)$ induces a pre-order on the Grothendieck group, whose positive cone is described in the following

\begin{theorem} \label{positive cone}
The positive cone of $K_0 (\Rmod, \oplus)$ is the subset $K_0^+ (\Rmod, \oplus)$ whose elements are of the form
$$\{ A \} - \left\{ \begin{array}{cc} A & \zero \\ B & C \end{array} \right\} + \{ C \},$$
where $A$ and $B$ have the same number of columns, and $B$ and $C$ the same number of rows. 
\end{theorem}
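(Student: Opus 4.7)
Since $\grg_R$ is a pre-order isomorphism (Theorem~\ref{triangle}), the positive cone $K_0^+ (\Rmod, \oplus)$ equals $\grg_R (G^+ (R))$, and by definition a general element of $G^+ (R)$ has the form $[B_2 \mid A_2] - [B_1 \mid A_1]$ with $(B_1 \mid A_1) \leq_n (B_2 \mid A_2)$ in some $L'_n (R)$. Applying $\grg_R$ gives
$$\grg_R \bigl( [B_2 \mid A_2] - [B_1 \mid A_1] \bigr) \;=\; \{B_2, A_2\} - \{B_2\} - \{B_1, A_1\} + \{B_1\}.$$
The plan is to show (i) every such four-term expression can be rewritten in the advertised three-term form, and (ii) conversely every three-term expression of that form arises in this way.

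For (i), collapse the two middle terms. By the direct-sum relation, $\{B_2\} + \{B_1, A_1\}$ is the class of the block matrix $\bigl( \begin{smallmatrix} B_1 & A_1 & \zero \\ \zero & \zero & B_2 \end{smallmatrix} \bigr)$; by Lemma~\ref{elimination}, applied to the given inequality $(B_1 \mid A_1) \leq_n (B_2 \mid A_2)$, this is equivalent to $\bigl( \begin{smallmatrix} B_1 & A_1 & \zero \\ \zero & A_2 & B_2 \end{smallmatrix} \bigr)$, and simultaneous row and column permutations (Theorem~\ref{Lickorish}.(3)) yield $\bigl( \begin{smallmatrix} B_2 & A_2 & \zero \\ \zero & A_1 & B_1 \end{smallmatrix} \bigr)$. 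Setting $A := (B_2, A_2)$, $B := (\zero, A_1)$, and $C := B_1$, this last matrix is precisely $\bigl( \begin{smallmatrix} A & \zero \\ B & C \end{smallmatrix} \bigr)$, and the four-term expression reduces to $\{A\} - \bigl\{ \bigl( \begin{smallmatrix} A & \zero \\ B & C \end{smallmatrix} \bigr) \bigr\} + \{C\}$, as required.

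For (ii), given matrices $A$ ($p \times q$), $B$ ($r \times q$), $C$ ($r \times s$) with the stated compatibilities, I exhibit an explicit witness: set $B_2 := \zero_{p \times 1}$, $A_2 := A$, $B_1 := \bigl( \begin{smallmatrix} \zero_{p \times s} \\ C \end{smallmatrix} \bigr)$, and $A_1 := \bigl( \begin{smallmatrix} A \\ B \end{smallmatrix} \bigr)$; both pairs belong to $L'_q (R)$. The inequality $(B_1 \mid A_1) \leq_q (B_2 \mid A_2)$ is witnessed by $U := (I_p, \zero_{p \times r})$ and $V = G = \zero$, since $U B_1 = \zero = B_2 V$ and $U A_1 = A = A_2 + B_2 G$. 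The remaining identifications in $K_0$ are routine: deleting zero rows and permuting columns (Theorem~\ref{Lickorish}.(1),(3)) gives $\{B_1\} = \{C\}$ and $\{B_1, A_1\} = \bigl\{ \bigl( \begin{smallmatrix} A & \zero \\ B & C \end{smallmatrix} \bigr) \bigr\}$, while $\{B_2\} = \{0\}$ and $\{B_2, A_2\} = \{0\} + \{A\}$, the latter because $(\zero_{p \times 1}, A)$ presents the module $R \oplus M_A$. Substituting these identities produces $\grg_R ([B_2 \mid A_2] - [B_1 \mid A_1]) = \{A\} - \bigl\{ \bigl( \begin{smallmatrix} A & \zero \\ B & C \end{smallmatrix} \bigr) \bigr\} + \{C\}$.

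The principal obstacle is guessing the converse witness in (ii). The guiding idea is to force the inequality to be trivial: by stacking $A$ atop $B$ in $A_1$ (and zeros atop $C$ in $B_1$), the projection $U = (I_p, \zero_{p \times r})$ recovers $(B_2 \mid A_2)$ from the top $p$ rows of $(B_1 \mid A_1)$, while the single zero column $B_2 = \zero_{p \times 1}$ supplies exactly the $\{0\}$ term needed to cancel the spurious $R$-summand in $M_{(\zero_{p \times 1}, A)}$. Once this configuration is identified, every subsequent verification reduces to routine bookkeeping with Lemma~\ref{elimination} and Theorem~\ref{Lickorish}.
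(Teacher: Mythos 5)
Your proof is correct and follows essentially the same route as the paper: both directions hinge on $\grg_R$ being a pre-order isomorphism, the forward direction collapses $\{B_2\}+\{B_1,A_1\}$ via the direct-sum relation and Lemma~\ref{elimination}, and your explicit witness $(B_1\mid A_1)\leq_q(B_2\mid A_2)$ for the converse is exactly the inequality $\left[\begin{smallmatrix}\zero & A\\ C & B\end{smallmatrix}\right]\leq[A]$ that the paper obtains from Lemma~\ref{more rows}. The only cosmetic differences are that you verify that inequality by hand rather than citing the lemma, and you justify $\{(\zero,A)\}=\{0\}+\{A\}$ module-theoretically rather than by Lickorish moves.
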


\begin{proof}
Suppose that $[B \; | \; A] \leq_n [B' \; | \; A']$ in $L_n (R),$ for some $n.$ Then
\begin{eqnarray*}
\grg_R ([B' \; | \; A'] - [B \; | \; A]) & = & \{ B' , A' \} - \{ B' \} - \{ B, A \} + \{ B \} \\
& = & \{ B', A' \} - \left\{\begin{array}{ccc} B & A & \zero \\ \zero & \zero & B' \end{array} \right\} - \{ B \} \\
& = & \{ B', A' \} - \left\{\begin{array}{ccc} B & A & \zero \\ \zero & A' & B' \end{array} \right\} - \{ B \} \\
& = & \{ B', A' \} - \left\{\begin{array}{ccc} B' & A' & \zero \\ \zero & A & B \end{array} \right\} - \{ B \}.
\end{eqnarray*} 
The first equality follows from the definition of the Grothendieck group; the second from Lemma~\ref{elimination}; and the third by transposition of the rows, and first and third columns, of the middle matrix.

On the other hand, suppose that $A,$ $B$ and $C$ are given as in the statement of the theorem. By Lemma~\ref{more rows},
$$\left[ \begin{array}{c|c} \zero & A \\ C & B \end{array} \right] \leq [A],$$ so that
\begin{eqnarray*} 
[A] - \left[ \begin{array}{c|c} \zero & A \\ C & B \end{array} \right] & \stackrel{\grg_R}{\mapsto} &
\{ A \} - \left\{ \begin{array}{cc} \zero & A \\ C & B \end{array} \right\} + 
\left\{ \begin{array}{c} \zero \\ C \end{array} \right\} \\
& = & \{ A \} - \left\{ \begin{array}{cc} A & \zero \\ B & C \end{array} \right\} + \{ C \}.
\end{eqnarray*}
The equality follows by transposing the columns of the middle matrix, and applying Theorem~\ref{Lickorish}.(1) to the third matrix.
\end{proof}

Let $\grG$ be a pre-ordered abelian group. A $\grG${\em -character} is a morphism 
$\rho : K_0 (\Rmod, \Ab) \to (\grG, \leq)$ of pre-ordered abelian groups. Equivalently, it is a group morphism that respects positive cones. A $\grG$-character may be considered as a map $\rho$ from the set of equivalence classes of matrices to $\grG$ that satisfies:
\begin{enumerate}
\item $\rho \{ I_n \} = 0,$ for every $n \geq 1;$ 
\item ${\dis  \rho \left\{ \begin{array}{cc} A & \zero \\ \zero & B \end{array} \right\} = \rho \{ A \} + \rho \{ B \};}$ and
\item ${\dis  \rho \left\{ \begin{array}{cc} A & \zero \\ B & C \end{array} \right\} \leq \rho \{ A \} + \rho \{ C \}.}$
\end{enumerate}
One may verify that these three properties imply that $\rho (AB) \geq \rho (A)$ and $\rho (AB) \geq \rho (B).$
These maps are closely related to the Sylvester rank functions of Schofield~\cite[pp.\ 96-97]{Scho} and, when $\grG = \bZ$ is the totally ordered group of integers, correspond to the characters studied by Crawley-Boevey~\cite[\S 5]{CB}. If the elements of $K_0 (\Rmod, \oplus)$ are represented as linear combinations of symbols $\{ M \},$ where $M \in \Rmod,$ then the three properties of a $\grG$-character may be rephrased as: 
\begin{enumerate}
\item $\rho \{ 0 \} = 0;$ 
\item $\rho (M \oplus N) = \rho (M) + \rho (N);$ and
\item if $M \to N \to K \to 0$ is an exact sequence in $\Rmod,$ then $\rho (N) \leq \rho (M) + \rho (K).$
\end{enumerate}
Since $\Rmod$ is closed under cokernels, the last property is equivalent to the condition that for every morphism 
$f: M \to N$ in $\Rmod,$ $\rho (N) \leq \rho (M) + \rho (\Coker f).$

\section{Homology}

In this section, a complex is introduced whose group of $n$-chains is the free abelian group on the $(n+1)$-ary matrix pairs, and whose $0$-dimensional homology group is naturally isomorphic to $G_0 (R),$ the $0$-dimensional Goursat group of $R.$

\subsection{The Face Operations} In order to define the boundary map of the complex, let us introduce the faces of an $(n+1)$-ary matrix pair $[B \; | A_0, A_1, \ldots, A_n \; ].$ We shall denote by $\hat{A}_i$ the matrix with $n$ columns obtained by removing from $A$ the column $A_i,$ 
$$\hat{A}_i := (A_0, A_1, \ldots, A_{i-1}, A_{i+1}, \ldots, A_{n+1}).$$

\begin{proposition} \label{faces}
Let $(B \; | \; A) \leq (B' \; | \; C)$ in $L'_{n+1} (R).$ Then, for every $i,$ $0 \leq i \leq n,$
\begin{enumerate}
\item $(B \; | \; \hat{A}_i) \leq (B' \; | \; \hat{C}_i)$ and
\item $(B, A_i \; | \; \hat{A}_i) \leq (B', C_i \; | \; \hat{C}_i)$
\end{enumerate}
in $L'_n (R).$
\end{proposition}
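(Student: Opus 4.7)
The plan is to unwind the hypothesis columnwise and, for each of the two claims, construct explicit witnesses by modifying the matrices $U$, $V$, $G$ supplied by the assumption.

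Fix $U$, $V$, $G$ with $UB = B'V$ and $UA = C + B'G$. Columnwise the second equation reads $UA_j = C_j + B'G_j$ for each $j$, where $G_j$ denotes the $j$-th column of $G$. Deleting the $i$-th column from both sides gives $U\hat{A}_i = \hat{C}_i + B'\hat{G}_i$, with $\hat{G}_i$ the matrix obtained from $G$ by removing its $i$-th column. Combined with the unchanged equation $UB = B'V$, this establishes (1) using the witnesses $(U, V, \hat{G}_i)$.

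For (2) I would set $U' := U$ and introduce the block matrices
$$V' := \begin{pmatrix} V & G_i \\ \zero & 1 \end{pmatrix}, \qquad G' := \begin{pmatrix} \hat{G}_i \\ \zero \end{pmatrix}.$$
Direct block computation gives
$$(B', C_i)\, V' = (B'V,\; B'G_i + C_i) = (UB,\; UA_i) = U(B, A_i),$$
which is the first defining equation, while
$$(B', C_i)\, G' = B'\hat{G}_i$$
yields $\hat{C}_i + (B', C_i)\,G' = \hat{C}_i + B'\hat{G}_i = U\hat{A}_i$, the second. This verifies (2).

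The only care required is dimensional bookkeeping: if $B$ is $m \times k$ and $B'$ is $m' \times k'$, then $V$ is $k' \times k$ and $G$ is $k' \times (n+1)$, so $V'$ has size $(k'+1) \times (k+1)$ and $G'$ has size $(k'+1) \times n$, exactly matching the shape required for a relation between $(B, A_i \mid \hat{A}_i)$ and $(B', C_i \mid \hat{C}_i)$ in $L'_n(R)$. I do not anticipate any genuine obstacle: the proposition is essentially a column-by-column unpacking of the definition of $\leq$, together with one block-matrix absorption move to handle the extra column $A_i$ appended to the left matrix.
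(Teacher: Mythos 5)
Your proof is correct and follows essentially the same route as the paper: delete the $i$-th column from the equation $UA = C + B'G$ for part (1), and absorb the extra column $A_i$ via the block witnesses $V' = \left(\begin{smallmatrix} V & G_i \\ \zero & 1 \end{smallmatrix}\right)$ and $G' = \left(\begin{smallmatrix} \hat{G}_i \\ \zero \end{smallmatrix}\right)$ for part (2). In fact your block matrices are the correctly typeset versions of the ones in the paper's proof, which contains minor typos ($\hat{G}_i$ where $G_i$ is meant, and $I_n$ where the $1\times 1$ identity is meant).
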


\begin{proof}
We are given matrices $U,$ $V$ and $G$ satisfying 
$$UB = BV \;\;\; \mbox{and} \;\;\; UA = C + B'G.$$
Equating the $i$-columns of the second equation gives the equation $UA_i = C_i + B'G_i;$ equating the other columns gives $U\hat{A}_i = \hat{C}_i + B'\hat{G}_i.$ From these equation we derive that:
\begin{enumerate}
\item $UB = BV$ and $U\hat{A}_i = \hat{C}_i + B'\hat{G}_i,$ which implies that 
$(B \; | \; \hat{A}_i) \leq (B' \; | \; \hat{C}_i);$ and \bigskip
\item ${\dis U(B, A_i) = (B', C_i) \left( \begin{array}{cc} V & \hat{G}_i \\ \zero & I_n \end{array} \right)}$ and
${\dis U\hat{A}_i = \hat{C}_i + (B', C_i) \left( \begin{array}{c} \hat{G}_i \\ \zero \end{array} \right),}$ which implies that $(B, A_i \; | \; \hat{A}_i) \leq (B', C_i \; | \; \hat{C}_i).$
\end{enumerate}
\end{proof}

We think of an $(n+1)$-matrix pair $[B \; | \; A]$ as a cube, and define its faces using Proposition~\ref{faces}. For each $i$ such that $0 \leq i \leq n,$ let the {\em top} $i${\em -face} of $[B \; | \; A]$ be given by
$$E_i [B \; | \; A] := [B, A_i \; | \; \hat{A}_i];$$
and the {\em bottom} $i${\em -face} by
$$N_i [B \; | \; A] := [B \; | \; \hat{A}_i].$$
By Proposition~\ref{faces}, these two face operators are well-defined morphisms of linear orders:
$$E_i, \; N_i : L_{n+1}(R) \to L_n (R).$$
The bottom $i$-face operator $N_i$ respects the minimum element,
$$N_i (0_{n+1}) = N_i [1 \; | \; {_1}\zero_{n+1}] = [1 \; | \; {_1}\zero_n] = 0_n,$$
and preserves the infimum operation,
\begin{eqnarray*}
N_i ([B\; | \; A] \wedge [B' \; | \; C]) & = & N_i 
\left[ \begin{array}{cc|c} B & \zero & A \\ \zero & B' & C  \end{array} \right] \\
& = & \left[ \begin{array}{cc|c} B & \zero & \hat{A}_i \\ \zero & B' & \hat{C}_i  \end{array} \right] \\
& = & N_i [B \; | \; A] \wedge N_i [B' \; | \; C].
\end{eqnarray*}
Dually, the top $i$-face operator $E_i,$ respects the maximum element, and preserves the supremum operation. This will follow from the next proposition, which relates the top and bottom $i$-faces of an $(n+1)$-ary matrix cube in a manner similar to the way in which the infimum and supremum operations are related in $L_n (R).$ To state the proposition, we denote by $E^*_i$ and $N^*_i$ the respective $i$-face operators on the opposite partial order $L_{n+1} (R^{\op}).$

\begin{proposition} \label{dual faces}
Let $[B \; | \; A]$ be an $(n+1)$-ary matrix pair in $L_{n+1}(R).$ Then
$$E^*_i [B \; A]^* = (N_i [B \; | \; A])^*$$
holds in $L_n (R^{\op}).$
\end{proposition}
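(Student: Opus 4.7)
The plan is to use the anti-isomorphism property established in Corollary~\ref{lattice duality}. Since the map $X \mapsto X^*$ is an anti-isomorphism between $L_n(R^{\op})$ and $L_n(R)$ satisfying $X^{**} = X$, the claim $E_i^*[B \; | \; A]^* = (N_i[B \; | \; A])^*$ in $L_n(R^{\op})$ is equivalent (by applying $*$ one more time) to $(E_i^*[B \; | \; A]^*)^* = N_i[B \; | \; A]$ in $L_n(R)$, which I will prove.

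Using the definitions of $E_i^*$ and of the dual $X \mapsto X^*$, two successive transpositions show that $(E_i^*[B \; | \; A]^*)^*$ equals
$$\left[ \begin{array}{cc|c} B & A & \zero \\ \zero & M & N \end{array} \right],$$
where $M$ is the $(n+1) \times (n+1)$ permutation matrix obtained from $I_{n+1}$ by moving its $i$-th row to the top (so the rows of $M$, read downward, are $e_i^{\tr}, e_0^{\tr}, \ldots, \widehat{e_i^{\tr}}, \ldots, e_n^{\tr}$), and $N = \left( \begin{array}{c} \zero \\ I_n \end{array} \right)$ is the $(n+1) \times n$ matrix consisting of a zero top row followed by $I_n$. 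The key combinatorial identity is $AM^{-1}N = \hat{A}_i$: the product $M^{-1}N$ is obtained from $I_n$ by inserting a zero row at position $i$, so right-multiplication of $A$ by $M^{-1}N$ picks out precisely the columns $A_j$ with $j \neq i$ in their original order.

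To finish, apply Corollary~\ref{invertible}.(1) with the invertible block matrix $U = \left( \begin{array}{cc} I_m & -AM^{-1} \\ \zero & I_{n+1} \end{array} \right)$, which zeroes out the $A$-block in the top-right of the left matrix and produces the pair
$$\left[ \begin{array}{cc|c} B & \zero & -\hat{A}_i \\ \zero & M & N \end{array} \right].$$
By Proposition~\ref{infimum} this equals $[B \; | \; -\hat{A}_i] \wedge [M \; | \; N]$. Since $M$ is invertible and $M(M^{-1}N) = N$, Proposition~\ref{maximum} yields $[M \; | \; N] = 1_n$; and $[B \; | \; -\hat{A}_i] = [B \; | \; \hat{A}_i]$ follows by applying parts (1) and (2) of Corollary~\ref{invertible} with $P = -I_m$ and $Q = -I_k$. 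The pair therefore collapses to $[B \; | \; \hat{A}_i] = N_i[B \; | \; A]$, as required.

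The principal obstacle I expect is pure bookkeeping: correctly assembling the explicit block form of $(E_i^*[B \; | \; A]^*)^*$ after the two applications of $*$, pinning down the permutation $M$, and verifying the identity $AM^{-1}N = \hat{A}_i$. Once these combinatorial facts are in place, the rest is a short and routine application of the Rules of Divisibility together with the infimum and maximum lemmas already at hand.
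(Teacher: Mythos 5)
Your proof is correct, and it takes the mirror-image route to the one in the paper. The paper stays in $L_n (R^{\op})$ and simplifies $E^*_i [B \; | \; A]^*$ directly down to $(N_i [B \; | \; A])^* \wedge 1_n = (N_i [B \; | \; A])^*,$ by moving the row $(A_i)^{\tr}$ of $A^{\tr}$ to the bottom of the pair and then killing it with column operations against the entry $1$ contributed by the column $\bei.$ You instead apply $*$ one more time and carry out the dual computation in $L_n (R),$ reducing $(E^*_i [B \; | \; A]^*)^*$ to $[B \; | \; \hat{A}_i] \wedge [M \; | \; N] = N_i [B \; | \; A] \wedge 1_n.$ Your block form of the double dual is assembled correctly, the identity $AM^{-1}N = \hat{A}_i$ checks out, the left multiplication by the unitriangular block matrix $U$ is the transpose of the paper's column operations, and your use of Proposition~\ref{maximum} to get $[M \; | \; N] = 1_n$ plays the role of the paper's factor $[1 \; | \; {_1}\zero_n] = 1_n.$ What your route costs is an extra appeal to the involutivity $X^{**} = X$ of Corollary~\ref{lattice duality} (applied to a pair over $R^{\op},$ which is legitimate since the two dualization maps are mutual inverses); what it buys is that all the matrix bookkeeping happens over $R$ rather than $R^{\op},$ so there is no reversed multiplication $\ast$ to track. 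Both arguments are sound and of essentially the same length.
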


\begin{proof}
By definition, we have that
$$E^*_i [B \; | \; A]^* = E^*_i \left[ \begin{array}{c|c} B^{tr} & \zero \\ A^{tr} & I_{n+1}  \end{array} \right]
= \left[ \begin{array}{cc|c} B^{tr} & \zero & \zero \\ A^{tr} & \bei & (\widehat{I_{n+1}})_i  \end{array} \right],$$
where $\bei$ is the $(n + 1) \times 1$ column matrix with the unique nonzero entry $1$ in the $i$-row, the indexing begining at $0.$ The $i$-row of $A^{\tr}$ is given by $(A_i)^{\tr}.$ If that row is brought down to the bottom of the matrix pair, we obtain
$$\left[ \begin{array}{cc|c} B^{tr} & \zero & \zero \\ (\widehat{A_i})^{tr} & \zero & I_n \\
(A_i)^{\tr} & 1 & \zero \end{array} \right] =
\left[ \begin{array}{cc|c} B^{tr} & \zero & \zero \\ (\widehat{A_i})^{tr} & \zero & I_n \\
\zero & 1 & \zero \end{array} \right].$$
The equation follows by performing column operations on the left matrix to eliminate the nonzero entries of $(A_i)^{\tr}.$ But this last $n$-ary matrix pair is just 
$$(N_i [B \; | \; A])^* \wedge [1 \; | \; {_1}\zero_n] = (N_i [B \; | \; A])^* \wedge 1_n = (N_i [B \; | \; A])^*.$$ 
\end{proof}

The relations between the various $i$-face and $j$-face operators is described next. They are based on the observation that if $A$ has $n+1$ columns and $0 \leq i < j \leq n,$ then the $j$-column of $A$ becomes the $(j-1)$-column of
$\hat{A}_i,$ $$A_j = (\hat{A}_i)_{j-1}.$$ Thus the matrix obtained from $A$ by removing the $i$ and $j$-columns may be represented as $$\widehat{(\hat{A}_j)}_i = \widehat{(\hat{A}_i)}_{j-1}.$$
 
\begin{proposition} \label{face relations}
For $0 \leq i < j \leq n,$ the face operators are related according to the equations
\begin{enumerate}
\item $N_i N_j = N_{j-1} N_i$ and $E_i E_j = E_{j-1} E_i;$ and
\item $N_i E_j = E_{j-1} N_i$ and $E_i N_j = N_{j-1} E_i.$ 
\end{enumerate}
\end{proposition}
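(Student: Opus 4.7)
The plan is to prove all four equations by direct computation, using the identity $\widehat{(\hat{A}_j)}_i = \widehat{(\hat{A}_i)}_{j-1}$ noted just before the proposition, together with the observation that for $i < j$ we have $(\hat{A}_i)_{j-1} = A_j$ and $(\hat{A}_j)_i = A_i$. Each of the four assertions reduces to an equality of matrix pairs in $L_{n-1}(R)$, and I will verify three of them by noting that the two sides are literally the same pair, while the remaining one requires only a permutation of columns, which is harmless by Corollary~\ref{invertible}.

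First I would treat the two bottom-bottom and mixed cases, where no permutation is needed. For the bottom faces, $N_i N_j[B\mid A]=N_i[B\mid \hat A_j]=[B\mid \widehat{(\hat A_j)}_i]$, which by the displayed identity equals $[B\mid \widehat{(\hat A_i)}_{j-1}]=N_{j-1}N_i[B\mid A]$. For the mixed cases, $N_iE_j[B\mid A]=N_i[B,A_j\mid \hat A_j]=[B,A_j\mid \widehat{(\hat A_j)}_i]$, and similarly $E_{j-1}N_i[B\mid A]=E_{j-1}[B\mid \hat A_i]=[B,(\hat A_i)_{j-1}\mid \widehat{(\hat A_i)}_{j-1}]=[B,A_j\mid \widehat{(\hat A_i)}_{j-1}]$; these coincide by the same identity. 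Exactly the same computation handles $E_iN_j=N_{j-1}E_i$, this time using $(\hat A_j)_i=A_i$.

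The only case that is not a literal identity is the top-top one. Here $E_iE_j[B\mid A]=E_i[B,A_j\mid \hat A_j]=[B,A_j,A_i\mid \widehat{(\hat A_j)}_i]$, while $E_{j-1}E_i[B\mid A]=[B,A_i,A_j\mid \widehat{(\hat A_i)}_{j-1}]$. The two right-hand matrices agree by the key identity, but the two left-hand matrices differ by the transposition of their last two columns. Corollary~\ref{invertible}.(2), applied with $Q$ the appropriate permutation matrix acting on the columns of the left matrix (keeping the block $B$ untouched by extending $Q$ to $I_k\oplus Q$), identifies them as the same element of $L_{n-1}(R)$.

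The proof is thus essentially bookkeeping; the only subtlety is the column-permutation step in part~(1) for $E_iE_j$, and it is the sole place where one must appeal to something beyond the defining equalities of $N_i$ and $E_i$. Alternatively, once $N_iN_j=N_{j-1}N_i$ is established, one can derive all three remaining equations at once by dualising via the anti-isomorphism $[B\mid A]\mapsto [B\mid A]^*$ of Corollary~\ref{lattice duality} combined with Proposition~\ref{dual faces}, which exchanges $E$ with $N^*$; but the direct verification above is equally short.
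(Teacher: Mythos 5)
Your proof is correct, and it rests on the same combinatorial identity $\widehat{(\hat{A}_j)}_i = \widehat{(\hat{A}_i)}_{j-1}$ (together with $(\hat{A}_i)_{j-1}=A_j$ and $(\hat{A}_j)_i=A_i$) that the paper uses. The difference is in how the four cases are dispatched. The paper verifies only $N_iN_j=N_{j-1}N_i$ and $N_iE_j=E_{j-1}N_i$ directly, and obtains the remaining two equations $E_iE_j=E_{j-1}E_i$ and $E_iN_j=N_{j-1}E_i$ by passing to $R^{\op}$ and invoking Proposition~\ref{dual faces}; this completely avoids the one case where the two sides are not literally identical pairs. You instead verify all four directly, which forces you to handle the top-top case $E_iE_j$, where the left matrices $(B,A_j,A_i)$ and $(B,A_i,A_j)$ differ by a transposition of the last two columns; your appeal to Corollary~\ref{invertible}.(2) with a permutation matrix of the form $I_k\oplus Q$ is exactly the right fix and you correctly flag it as the only non-trivial step. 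Your route is more self-contained (it does not depend on Proposition~\ref{dual faces} or the duality machinery), at the cost of one extra invocation of the invariance of matrix pairs under column permutation of the left matrix; the paper's route is slightly slicker but leans on the dual face operators. You also note the duality shortcut yourself, so you have in effect both arguments.
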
 

\begin{proof}
We only prove the first of each pair; the second follows from the first by duality. For example, once $N_i N_j = N_{j-1} N_i$ is proved for $R,$ it is also true for the opposite ring $R^{\op}.$ Proposition~\ref{dual faces} then implies $E_i E_j = E_{j-1} E_i$ for $R.$

To prove $N_i N_j = N_{j-1} N_i,$ simply note that if $[B \; | \; A]$ is an $(n+1)$-ary matrix pair, then
$$N_i N_j [B \; | \; A] = N_i [B \; | \; \hat{A}_j] = [B \; | \; \widehat{(\hat{A}_j)}_i] = [B \; | \; \widehat{(\hat{A}_i)}_{j-1}] = N_{j-1} [B \; | \; \hat{A}_i] =  N_{j-1} N_i [B \; | \; A].$$
Similarly, $N_i E_j [B \; | \; A] = N_i [B, A_j \; | \; \hat{A_j}] = [B, A_j \; | \; \widehat{(\hat{A_j})}_i] = 
[B, (\hat{A}_i)_{j-1} \; | \; \widehat{(\hat{A}_i)}_{j-1}],$ while
$$E_{j-1} N_i [B \; | \; A] = E_{j-1} [B \; | \; \hat{A}_i] = [B, (\hat{A}_i)_{j-1} \; | \; \widehat{(\hat{A}_i)}_{j-1}] = N_i E_j [B \; | \; A].$$
\end{proof}

\subsection{The Complex of Matrix Pairs}

Let us define a complex $Q_* (R)$ of abelian groups, indexed by the integers $\bZ$ so that
\begin{enumerate}
\item if $n \geq 0,$ then $Q_n (R)$ is the free abelian group on the $(n+1)$-ary matrix pairs $[B\; | \; A];$
\item if $n = -1,$ then $Q_{-1}(R) = \bZ;$ and 
\item if $n < -1,$ then $Q_n (R) = 0.$
\end{enumerate}
The boundary map $\partial_n : Q_n (R) \to Q_{n_1}(R)$ is defined so that: 
\begin{enumerate}
\item if $n > 0,$ then $\partial_n$ is defined on an $(n+1)$-ary matrix pair by the rule
$$[B \; | \; A] \mapsto \sum_{i=0}^n \; (-1)^i (E_i[B \; | \; A] - N_i[B \; | \; A]),$$
and extended linearly;
\item if $n = 0,$ then $\partial_0 : Q_0 (R) \to \bZ$ is the augmentation map $\epsilon,$ defined on a unary matrix pair by the rule $\epsilon: [B \; | \; A] \mapsto 1,$ and extended linearly; and
\item if $n \leq -1,$ then, of course, $\partial_n = 0.$
\end{enumerate}
If the definition of the $i$-face operators $E_i$ and $N_i$ is extended linearly, to morphisms $E_i$ and $N_i$ from $Q_n (R) \to Q_{n-1},$ then, for $n > 0,$ we may express the boundary operator as
$$\partial_n = \sum_{i=0}^n \; (-1)^i(E_i - N_i).$$
It is routine (cf.\ p.13 of~\cite{Mass}) to verify that $Q_* (R)$ is indeed a complex.

\begin{proposition}
For every integer $n,$ the composition $\partial^n \circ \partial^{n+1} = 0.$
\end{proposition}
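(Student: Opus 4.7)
The claim is trivial for $n \le -1$, since then one of $\partial_n, \partial_{n+1}$ is the zero map. For $n=0$, the map $\partial_1 = (E_0 - N_0) - (E_1 - N_1)$ sends a $2$-ary matrix pair $[B\,|\,A]$ to a signed sum of four unary matrix pairs, each of which the augmentation $\epsilon$ sends to $1$; the signs $+,-,-,+$ sum to zero, so $\epsilon\circ\partial_1 = 0$.

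The main case is $n \geq 1$. The plan is to consolidate the two families of face operators into a single one and then reuse the standard simplicial $\partial^2 = 0$ argument. Define, for $0 \le i \le n$, the combined operator $F_i := E_i - N_i : Q_n(R) \to Q_{n-1}(R)$, so that $\partial_n = \sum_{i=0}^n (-1)^i F_i$. The first step is to check that these $F_i$ satisfy the simplicial identities $F_i F_j = F_{j-1}F_i$ for $i < j$. This follows by expanding
\[
F_i F_j = E_iE_j - E_iN_j - N_iE_j + N_iN_j
\]
and applying the four identities of Proposition~\ref{face relations} term-by-term, which yields
\[
E_{j-1}E_i - N_{j-1}E_i - E_{j-1}N_i + N_{j-1}N_i = (E_{j-1}-N_{j-1})(E_i - N_i) = F_{j-1}F_i.
\]

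With this in hand, the remainder is the standard calculation. Writing
\[
\partial_n \circ \partial_{n+1} \;=\; \sum_{i=0}^{n}\sum_{j=0}^{n+1} (-1)^{i+j} F_i F_j,
\]
split the double sum into the ranges $j \leq i$ and $i < j$. In the second range, apply $F_i F_j = F_{j-1}F_i$ and substitute $i' = j-1$, $j' = i$ (so $j' \le i'$); the sign becomes $(-1)^{i'+j'+1}$, which flips the overall sign, yielding the negative of the sum over $j' \le i'$ of $(-1)^{i'+j'} F_{i'} F_{j'}$. Renaming indices, this cancels the contribution from $j \le i$, and the total is zero.

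The only delicate step is the index shift, where one must verify that the range $\{(i,j): 0 \le i < j \le n+1\}$ maps bijectively onto $\{(i',j'): 0 \le j' \le i' \le n\}$ and that the sign $(-1)^{i+j}$ picks up exactly one minus sign from the substitution. Once this bookkeeping is done and the consolidated identity $F_iF_j = F_{j-1}F_i$ is established, the vanishing is forced by the purely formal argument, independent of the specific definitions of $E_i$ and $N_i$.
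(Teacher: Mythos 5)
Your proof is correct and is exactly the ``routine'' verification the paper delegates to the standard cubical-homology argument (it cites p.~13 of Massey for it): consolidate each pair of opposite faces into $F_i = E_i - N_i$, deduce $F_iF_j = F_{j-1}F_i$ for $i<j$ from Proposition~\ref{face relations}, and cancel the double sum by the usual index shift, with the augmentation case $\epsilon\circ\partial_1=0$ handled separately. No gaps; the face identities in the paper are precisely the four you need for the consolidation step.
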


The complex $Q_* (R)$ is called the {\em complex of matrix pairs} of $R.$ If $f : R \to S$ is a morphism of rings, then for every $n \geq 0,$ the morphism $L_{n+1} (f) : L_{n+1} (R) \to L_{n+1} (S)$ of partially ordered sets may be extended linearly, to obtain a morphism $Q_n (f) : Q_n (R) \to Q_n (S)$ of abelian groups. If the morphism 
$Q_{-1} (f) : \bZ \to \bZ$ is defined to the identity, and $Q_m (f) = 0$ for $m < -1,$ then we obtain the following,
which requires a routine verification (cf.\ \S II.3 of~\cite{Mass}).

\begin{proposition} 
For every morphism $f : R \to S$ of rings, the family $Q_* (f) : Q_* (R) \to Q_* (S)$ is a morphism of complexes.
\end{proposition}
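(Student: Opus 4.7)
The plan is to verify that for every integer $n,$ the square
$$\partial^S_n \circ Q_n (f) = Q_{n-1} (f) \circ \partial^R_n$$
commutes on $Q_n (R),$ and then the defining condition of a morphism of complexes is satisfied. The cases $n < 0$ are trivial, since $Q_m (f) = 0$ for $m < -1$ and $Q_{-1}(f)$ is the identity on $\bZ$ by definition. For $n = 0,$ both compositions send the generator $[B \; | \; A]$ of $Q_0 (R)$ to $1 \in \bZ,$ so the square commutes in that case as well.

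For $n > 0,$ since $\partial_n = \sum_{i=0}^n (-1)^i (E_i - N_i),$ it suffices to show that $Q_n (f)$ commutes with each face operator, that is,
$$E_i \circ Q_n (f) = Q_{n-1}(f) \circ E_i \;\;\;\mbox{and}\;\;\; N_i \circ Q_n (f) = Q_{n-1}(f) \circ N_i.$$
Both identities are proved by direct evaluation on a generator $[B \; | \; A].$ The essential point is that applying $f$ to the entries of a matrix commutes with selecting or deleting a column: in the notation of the excerpt, $\widehat{f(A)}_i = f(\hat{A}_i)$ and $f(A)_i = f(A_i).$ From this,
$$N_i (Q_n (f)[B \; | \; A]) = N_i [f(B) \; | \; f(A)] = [f(B) \; | \; \widehat{f(A)}_i] = [f(B) \; | \; f(\hat{A}_i)] = Q_{n-1}(f) (N_i [B \; | \; A]),$$
and similarly for $E_i.$ Extending linearly gives the commutation on all of $Q_n (R).$

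The hardest step, conceptually, is simply to verify that $Q_n (f)$ is well-defined in the first place; but this is already implicit in the opening discussion of \S 1.4, where it is observed that if $(B \; | \; A) \leq_n (B' \; | \; A')$ in $L'_n (R),$ then applying $f$ to each of the three matrices $U,$ $V,$ $G$ witnessing the inequality produces witnesses for $(f(B) \; | \; f(A)) \leq_n (f(B') \; | \; f(A'))$ in $L'_n (S),$ so $L_n (f)$ is a well-defined morphism of partial orders. Beyond this, the entire argument is a routine, functorial bookkeeping: every operation used to define the boundary map is column-wise, and $f$ acts entry-wise, so the two necessarily commute.
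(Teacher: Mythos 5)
Your verification is correct and is precisely the routine check the paper leaves to the reader (with a citation to Massey): reduce commutation with $\partial_n$ to commutation with each $E_i$ and $N_i,$ which holds because applying $f$ entrywise commutes with deleting or adjoining a column, and handle the degenerate cases $n \leq 0$ directly. Nothing is missing; the appeal to \S 1.4 for well-definedness of $L_n(f)$ on equivalence classes is exactly the right justification for evaluating on representatives.
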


The anti-isomorphism $[B \; | \; A] \mapsto [B \; | \; A]^*$ from $L_{n+1} (R)$ to $L_{n+1} (R^{\op})$ may be extended linearly, to obtain an isomorphism $p_n : Q_n (R) \to Q_n (R^{\op})$ of abelian groups, for $n \geq 0.$ Proposition~\ref{dual faces} implies that these morphisms $p_n$ satisfy the equations
$$p_{n-1}N_i = E^*_i p_n \;\; \mbox{and} \;\; p_{n-1}E_i = N^*_i p_n.$$ 
Define 
$P_n : Q_n (R) \to Q_n (R^{\op})$ to be the map 
$$P_n := \left\{ \begin{array}{ll} (-1)^{n+1}p_n & \;\; \mbox{if $n \geq 0;$} \\
-1_{\bZ} & \;\; \mbox{if $n = -1;$ and} \\
0 & \;\; \mbox{if $n < -1.$}
\end{array} \right.$$
It is routine to verify that $P_* : Q_* (R) \to Q_* (R^{\op})$ is a morphism of complexes. The inverse of $P_*$ is defined similarly, and yields the following.

\begin{proposition} \label{homo dual}
The morphism $P_* : Q_* (R) \to Q_* (R^{\op})$ of complexes is an isomorphism.
\end{proposition}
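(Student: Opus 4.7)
The plan is to verify that $P_*$ is (i) degreewise an isomorphism of abelian groups and (ii) a chain map; these two properties together force $P_*$ to be an isomorphism of complexes, and the degreewise inverse is automatically a chain map. Moreover, the inverse will be visibly given by the same recipe applied to the opposite ring, using that $R^{\op\op} = R$, so no separate verification of the inverse is required once the two bullets above are settled.

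For (i), the only substantive range is $n \geq 0$. Corollary~\ref{lattice duality} provides an anti-isomorphism of partial orders $L_{n+1}(R) \to L_{n+1}(R^{\op})$ given by $[B \; | \; A] \mapsto [B \; | \; A]^*$, which in particular is a bijection of underlying sets. Extending linearly yields the isomorphism of free abelian groups $p_n : Q_n(R) \to Q_n(R^{\op})$; its two-sided inverse is the linear extension of the analogous rule from $R^{\op}$ to $R^{\op\op} = R$, using the identity $[B \; | \; A]^{**} = [B \; | \; A]$ verified in the paragraph preceding Corollary~\ref{lattice duality}. Multiplying by the sign $(-1)^{n+1}$ does not affect bijectivity, so each $P_n$ is an isomorphism. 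The case $n = -1$ (where $P_{-1} = -1_{\bZ}$) and the vanishing cases $n < -1$ are immediate.

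For (ii), the identity $\partial^{\op}_n \circ P_n = P_{n-1} \circ \partial_n$ is vacuous for $n < 0$, and for $n = 0$ both composites send every generator to $-1 \in \bZ$. For $n > 0$ I will rely on two face-swap identities: Proposition~\ref{dual faces} directly gives $E^*_i \circ p_n = p_{n-1} \circ N_i$, and applying that same proposition to the opposite ring $R^{\op}$ in place of $R$, then dualizing (using $R^{\op\op} = R$ and $[B \; | \; A]^{**} = [B \; | \; A]$), yields the companion identity $N^*_i \circ p_n = p_{n-1} \circ E_i$. Substituting both into $\partial^{\op}_n = \sum_{i=0}^n (-1)^i (E^*_i - N^*_i)$ turns $\partial^{\op}_n \circ P_n$ into $(-1)^{n+1} p_{n-1} \circ \sum_i (-1)^i (N_i - E_i)$, which collapses to $(-1)^n p_{n-1} \circ \partial_n = P_{n-1} \circ \partial_n$. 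The alternating sign $(-1)^{n+1}$ in the definition of $P_n$ is built in precisely to effect this cancellation. The only genuine obstacle is this sign bookkeeping and the clean extraction of the second face-swap identity from the opposite-ring version of Proposition~\ref{dual faces}; everything else is a formal consequence of results already in hand.
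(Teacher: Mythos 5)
Your proposal is correct and follows exactly the route the paper has in mind: the paper declares the verification ``routine,'' resting on the identities $p_{n-1}N_i = E^*_i p_n$ and $p_{n-1}E_i = N^*_i p_n$ drawn from Proposition~\ref{dual faces} together with the involutivity $[B \; | \; A]^{**} = [B \; | \; A]$ established before Corollary~\ref{lattice duality}, and your sign bookkeeping with $(-1)^{n+1}$ and the check at $n=0$ (both composites sending a generator to $-1$) carry out that routine correctly. No gaps.
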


\subsection{$0$-Dimensional Homology}

The $0$-dimensional homology of $R$ is defined to be the homology of the complex $Q_* (R)$ at $Q_0 (R),$ 
\DIAG
{\cdots} \n {\Ear{\partial_2}} \n {Q_1 (R)} \n {\Ear{\partial_1}} \n {Q_0 (R)} \n {\Ear{\epsilon}} \n {\bZ} \n {\ear} \n {0.}
\diag
The kernel of $\epsilon$ is the subgroup $B_0 (R)$ of $0${\em -boundaries;} the image of $\partial_1$ the subgroup 
$Z_0 (R)$ of $0${\em -cycles.} Because $\epsilon \circ \partial_1 = 0,$ the inclusion $Z_0 (R) \subseteq B_0 (R)$ holds, and the quotient group $H_0 (R) := B_0 (R)/Z_0 (R)$ is the $0${\em -dimensional homology} of $R.$ All of these notions are functorial in $R.$

\begin{theorem} \label{0 homo}
The morphism $\grl_R : G_0(R) \to H_0(R)$ induced by the rule 
$$\grl'_R : [B \; | \; A] \mapsto [B \; | \; A] - 0_1 \mod B_0 (R)$$ is a natural isomorphism.
\end{theorem}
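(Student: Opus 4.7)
\noindent{\bf Proof Proposal.}
The plan is to establish the theorem in four steps: (i) verify that the rule $\grl'_R$ gives a well-defined morphism $\grl_R$ out of $G_0(R)$, (ii) construct a candidate inverse $\mu_R : H_0(R) \to G_0(R)$, (iii) check that $\grl_R$ and $\mu_R$ are mutually inverse, and (iv) note naturality in $R$.

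\emph{Step 1 (well-definedness).} For any unary matrix pair $[B\;|\;A]$, the element $[B\;|\;A] - 0_1 \in Q_0(R)$ lies in $B_0(R) = \ker\epsilon$ since $\epsilon$ sends every unary generator to $1$. It remains to show that the two families of defining relations of $G_0(R)$ are sent to zero in $H_0(R)$. Relation (2) ($0_1 = 0$) is immediate, since $\grl'_R(0_1) = 0_1 - 0_1 = 0$. For Relation (1), the crucial observation is that if $A$ and $A'$ are column matrices with the same number of rows as $B$, then $[B\;|\;A,A']$ is a binary matrix pair, and a direct computation using the definitions of $E_i$ and $N_i$ yields
\[
\partial_1[B\;|\;A,A'] \;=\; [B,A\;|\;A'] - [B\;|\;A'] - [B,A'\;|\;A] + [B\;|\;A].
\]
This is precisely the difference of the two sides of Relation (1), so the image of that relation under $\grl'_R$ is a $0$-cycle, hence zero in $H_0(R)$.

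\emph{Step 2 (inverse).} Since $Q_0(R)$ is free on the unary matrix pairs, the assignment that sends each generator $[B\;|\;A]$ of $Q_0(R)$ to the corresponding generator $[B\;|\;A]$ of $G_0(R)$ extends linearly to a morphism $\widetilde\mu_R : Q_0(R) \to G_0(R)$. The generators of $Z_0(R)$ are the boundaries $\partial_1[B\;|\;A,A']$ displayed above; under $\widetilde\mu_R$ these map to Relation (1) of $G_0(R)$, hence to zero. Thus $\widetilde\mu_R$ factors through a morphism $\mu_R : H_0(R) \to G_0(R)$ defined on the class of $\sum n_i [B_i\;|\;A_i] \in B_0(R)$ by $\sum n_i [B_i\;|\;A_i] \in G_0(R)$.

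\emph{Step 3 (mutual inverses).} For a generator $[B\;|\;A]$ of $G_0(R)$,
\[
\mu_R \grl_R [B\;|\;A] \;=\; \mu_R\bigl([B\;|\;A] - 0_1\bigr) \;=\; [B\;|\;A] - 0_1 \;=\; [B\;|\;A]
\]
in $G_0(R)$, using Relation (2). Conversely, for the class of $\sum n_i [B_i\;|\;A_i] \in B_0(R)$ (so $\sum n_i = 0$),
\[
\grl_R \mu_R \Bigl[\sum n_i [B_i\;|\;A_i]\Bigr] \;=\; \Bigl[\sum n_i\bigl([B_i\;|\;A_i] - 0_1\bigr)\Bigr] \;=\; \Bigl[\sum n_i [B_i\;|\;A_i]\Bigr]
\]
in $H_0(R)$, since $\sum n_i \cdot 0_1 = 0$. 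Hence $\grl_R$ and $\mu_R$ are mutually inverse.

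\emph{Step 4 (naturality).} Given a ring morphism $f : R \to S$, the defining formulas $[B\;|\;A]\mapsto [f(B)\;|\;f(A)]$ on generators make both $G_0(f)$ and the morphism $H_0(f)$ (induced by $Q_*(f)$) commute trivially with $\grl'_{(-)}$, because $f$ preserves the zero matrices and identity matrices appearing in $0_1$. The main (and essentially only) obstacle in the argument is the recognition in Step~1 that Relation (1) of $G_0(R)$ is \emph{exactly} the boundary $\partial_1[B\;|\;A,A']$ of a binary matrix pair; once this is seen, the remainder is a bookkeeping exercise on free abelian groups.
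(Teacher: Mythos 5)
Your proof is correct and follows essentially the same route as the paper's: the key step in both is the recognition that Relation (1) of $G_0(R)$ is exactly $\partial_1[B\;|\;A,A']$ for a binary matrix pair, and the inverse is built by sending generators back to generators and checking that the image of $\partial_1$ dies. The only inessential difference is that you define the inverse on all of $Q_0(R)$ and then restrict, whereas the paper defines it directly on $\ker\epsilon$ using the free basis $\{[B\;|\;A]-0_1\}$ obtained from splitting the augmentation; your Step 3 is in fact more explicit than the paper's one-line check that the two maps are mutually inverse.
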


\begin{proof}
Let us verify that the rule $\grl'_R$ so defined respects the relations that define $G_0 (R).$ The family of relations given by $[B, A \; | \; A'] - [B \; | \; A']	- ([B, A' \; | \; A] - [B \; | \; A])$ is mapped by $\grl'_R$ to
\begin{eqnarray*}
& & ([B, A \; | \; A'] - 0_1) - ([B \; | \; A'] - 0_1	- ([B, A' \; | \; A] - 0_1) + ([B \; | \; A] - 0_1) \\
& = & [B, A \; | \; A'] - [B \; | \; A']	- ([B, A' \; | \; A] - [B \; | \; A]) \\
& = & E_0 [B \; | \; A, A'] - N_0 [B \; | \; A, A'] - (E_1 [B \; | \; A, A'] - N_1 [B \; | \; A, A']),
\end{eqnarray*}
which is the boundary $\partial_1 [B \; | \; A, A'].$ Also, the element $0_1$ maps to $0_1 - 0_1 = 0.$

Let us define a map $\grd_R : H_0 (R) \to G_0 (R)$ in the other direction, which will be the inverse of $\grl_R.$ To do so, note first that the group $Z_0 (R)$ of $0$-cycles is free on the generators $[B \; | \; A] - 0_1,$ where 
$[B \; | \; A] \in L_1 (R)$ is not the minimum element. This is seen by considering the splitting of the short exact sequence
\DIAG
{0} \n {\ear} \n {Z_0 (R)} \n {\ear} \n {Q_0 (R)} \n {\Ear{\epsilon}} \n {\bZ} \n {\ear} \n {0}
\diag 
that corresponds to the section $s : \bZ \to Q_0 (R)$ determined by $1 \mapsto 0_1.$ Define 
$\grd'_R : Z_0 (R) \to G_0 (R)$ by the rule $\grd'_R : [B \; | \; A] - 0_1 \mapsto [B \; | \; A].$ To see that $\grd'_R$ induces a morphism $\grd_R : H_0 (R) \to G_0 (R),$ it suffices to check that it annihilates every $0$-boundary. But if $[B \; | \; A, A']$ is a binary matrix pair, then
$$\grd'_R \circ \partial_1 [B \; | \; A, A'] = 
[B, A \; | \; A'] - [B \; | \; A']	- ([B, A' \; | \; A] - [B \; | \; A])$$
is an instance of Relation (1) of $G_0 (R).$ 

The two morphisms $\grl_R : G_0 (R) \to H_0 (R)$ and $\grd_R : H_0 (R) \to G_0 (R)$ are clearly mutual inverses, for if
we compose them in either order, the generators remain fixed.
\end{proof}

The isomorphisms of Theorem~\ref{0 homo} and Proposition~\ref{homo dual} may be composed to yield an isomorphism
\DIAG
{G_0(R)} \n {\Ear{\grl_R}} \n {H_0 (R)} \n {\Ear{P_0}} \n {H_0 (R^{\op})} \n {\Ear{\grl^{-1}_{R^{\op}}}} \n {G_0(R^{\op})}
\diag  
of $0$-dimensional Goursat groups. Composing further with the isomorphism $\grk$ of Theorem~\ref{triangle} yields an isomorphism between the $K_0 (\Rmod, \oplus)$ and the Grothendieck group $K_0 (\modR, \oplus)$ of the category $\modR$ of finitely presented right $R$-modules. One may check that if $A$ is a matrix with $n$ columns, then this isomorphism sends the class $\{ A \}$ in $K_0 (\Rmod, \oplus)$ to the element $n \{ 0 \} - \{ A^{\tr} \}$ in $K_0 (\modR, \oplus).$

\subsection{Degeneracy} An $(n+1)$-ary matrix pair $[B \; | \; A],$ $n > 0,$ is {\em degenerate} if for some $i,$ 
$0 \leq i \leq n+1,$ $$E_i [B \; | \; A] = N_i [B \; | \; A].$$
In order to show that the degenerate matrix pairs generate a subcomplex that does not change the $0$-dimensional homology, we shall need the following lemma.

\begin{lemma}
Suppose that $A$ is an $m \times n$ matrix, and $A'$ an $m \times n'$ matrix. If 
$(B, A \; | \; A') \leq (B \; | \; A')$ in $L'_{n'}(R),$ then $(B, A' \; | \; A) \leq (B \; | \; A)$ in $L'_{n}(R).$
\end{lemma}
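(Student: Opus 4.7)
The plan is to unpack the hypothesis directly and then produce explicit witness matrices for the conclusion. By definition of $\leq_{n'}$ in $L'_{n'}(R)$, the assumption $(B, A \; | \; A') \leq_{n'} (B \; | \; A')$ yields matrices $U$, $V$, and $G$ satisfying $U(B, A) = BV$ and $UA' = A' + BG$. If $B$ is $m \times k$, then $U$ is $m \times m$, and we may split $V$ columnwise as $V = (V_1, V_2)$ with $V_1$ of size $k \times k$ and $V_2$ of size $k \times n$. The hypothesis then reads $UB = BV_1$, $UA = BV_2$, and $UA' = A' + BG$.

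The crucial observation is that the third equation rewrites as $(I_m - U)A' = -BG$, so $(I_m - U)A'$ automatically lies in the column span of $B$. This motivates taking $U' := I_m - U$ as the outer witness for the conclusion. With this choice, $U'B = B - UB = B(I_k - V_1)$ and $U'A' = -BG$, so concatenating gives $U'(B, A') = B \cdot (I_k - V_1, -G)$, verifying the first required equation with $V' := (I_k - V_1, -G)$. For the remaining equation, $U'A = A - UA = A - BV_2 = A + B(-V_2)$, so setting $G' := -V_2$ completes the witness and establishes $(B, A' \; | \; A) \leq_n (B \; | \; A)$.

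I expect no real obstacle: once one spots that $(I_m - U)A'$ is automatically left-divisible by $B$, the substitution $U \mapsto I_m - U$ exposes a hidden symmetry between the roles of $A$ and $A'$, and the three equations required for the conclusion collapse to one-line verifications. A more conceptual route is available via Lemma Presta II---semantically, given $Bw_1 + A'w_2 = Av$ in a module, apply the hypothesis to $w_2$ using the witness data $w_1' = -w_1$, $w_2' = v$ (so that $Bw_1' + Aw_2' = A'w_2$) to obtain $w$ with $Bw = A'w_2$, whence $Av = B(w + w_1)$---but the direct matrix argument is equally short, self-contained, and stays entirely within the formal calculus.
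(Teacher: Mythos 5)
Your proof is correct and is essentially the paper's own argument: both decompose the hypothesis as $UB = BV_1$, $UA = BV_2$, $UA' = A' + BG$ and then take $I_m - U$ as the new outer witness, with $(I_k - V_1, -G)$ and $-V_2$ supplying the remaining data. The observation that $(I_m-U)A'=-BG$ is exactly the pivot the paper uses, so there is nothing to add.
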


\begin{proof}
We are given $U,$ $V = (V', V'')$ and $G$ such that $U(B,A) = B(V',V'')$ and $UA' = A' + BG.$ From that, we get
$$(I_m - U)(B, A') = (B - UB, A' - UA') = (B - BV', - BG) = B (I - V', -G)$$
and $(I_m - U)A = A - UA = A - BV'' = A + B(-V'').$ 
\end{proof}

\begin{proposition} \label{2-degenerate}
If $[B \; | \; A, A'] \in L_2(R),$ then $E_0 [B \; | \; A, A'] = N_0 [B \; | \; A, A']$ if and only if
$E_1 [B \; | \; A, A'] = N_1 [B \; | \; A, A'].$
\end{proposition}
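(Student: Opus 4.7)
The plan is to unpack the four face operators, reduce both equalities to one-sided inequalities by invoking Lemma~\ref{more columns}, and then apply the lemma stated just before Proposition~\ref{2-degenerate} twice (once with the roles of $A$ and $A'$ interchanged) to obtain the equivalence.

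First I would unwind the definitions. Writing $(A, A')$ as a $2$-column block, one has
$$E_0[B \; | \; A, A'] = [B, A \; | \; A'], \quad N_0[B \; | \; A, A'] = [B \; | \; A'],$$
$$E_1[B \; | \; A, A'] = [B, A' \; | \; A], \quad N_1[B \; | \; A, A'] = [B \; | \; A].$$
By Lemma~\ref{more columns}, the inequalities $[B \; | \; A'] \leq [B, A \; | \; A']$ and $[B \; | \; A] \leq [B, A' \; | \; A]$ hold unconditionally. Consequently $E_0[B \; | \; A, A'] = N_0[B \; | \; A, A']$ is equivalent to the single inequality $(B, A \; | \; A') \leq_1 (B \; | \; A')$ in $L'_1(R)$, and $E_1[B \; | \; A, A'] = N_1[B \; | \; A, A']$ is equivalent to $(B, A' \; | \; A) \leq_1 (B \; | \; A)$.

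The lemma immediately preceding the proposition is precisely the implication
$$(B, A \; | \; A') \leq (B \; | \; A') \;\; \Longrightarrow \;\; (B, A' \; | \; A) \leq (B \; | \; A),$$
which gives one direction. For the converse, I would simply apply the same lemma with $A$ and $A'$ exchanged, reading the roles of the two columns in the opposite order; since the hypothesis and conclusion of that lemma are formally symmetric under swapping $A$ and $A'$, this yields the reverse implication. Combining the two implications with the reductions from the previous paragraph completes the proof.

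There is no real obstacle here: once the face operators are written out, the content of the proposition is exactly the symmetry already packaged into the preceding lemma. The only care needed is to verify that Lemma~\ref{more columns} yields the unconditional inequalities in the direction claimed and that the swap $A \leftrightarrow A'$ makes sense at the level of block-matrix dimensions, both of which are routine.
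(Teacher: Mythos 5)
Your proof is correct, and it diverges from the paper's in one interesting way. Both arguments unwind the face operators identically, both use Lemma~\ref{more columns} to reduce each equality $E_i = N_i$ to the single nontrivial inequality, and both obtain the forward implication ($E_0 = N_0 \Rightarrow E_1 = N_1$) directly from the lemma preceding the proposition. Where you differ is in the converse: the paper notes that the forward direction holds in particular over $L_2(R^{\op})$ and then transports it back to $R$ via the duality of Proposition~\ref{dual faces}, whereas you observe that the preceding lemma, being universally quantified over $A$ and $A'$ with hypothesis $(B, A \mid A') \leq (B \mid A')$ and conclusion $(B, A' \mid A) \leq (B \mid A)$, is literally its own converse under the substitution $A \leftrightarrow A'$ (the conclusion for the pair $(A, A')$ is the hypothesis for the pair $(A', A)$). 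This instantiation is legitimate --- in the binary case both $A$ and $A'$ are $m \times 1$, so there is no dimensional obstruction --- and it yields a more elementary, self-contained argument that never leaves the ring $R$. The paper's route has the mild virtue of exercising the duality machinery it has just set up, but your symmetry observation is the shorter and arguably cleaner proof.
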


\begin{proof}
Suppose that $$E_0 [B \; | \; A, A'] = [B, A \; | \; A'] = [B \; | \; A'] = N_0 [B \; | \; A, A'].$$ Then
$(B, A \; | \; A') \leq (B \; | \; A'),$ implies, by the lemma, that $(B, A' \; | \; A) \leq (B \; | \; A).$ On the other hand, Lemma~\ref{more columns} implies that $(B \; | \; A) \leq (B, A' \; | \; A).$ Thus
$$E_1 [B \; | \; A, A'] = [B, A' \; | \; A] = [B \; | \; A] = N_1 [B \; | \; A, A'].$$
These direction of the equivalence holds, in particular, in $L_2 (R^{\op}).$ The converse now follows by an application of Proposition~\ref{dual faces}.
\end{proof}

The proposition implies that if $[B \; | \; A, A'] \in L_2 (R)$ is degenerate, then $\partial_1 [B \; | \; A, A'] = 0.$
For $n > 0,$ let $M_n (R) \subseteq Q_n (R)$ be the subgroup generated by degenerate matrix pairs. If $n \leq 0,$ let
$M_n (R) \subseteq Q_n (R)$ be the zero subgroup.\bigskip

\begin{proposition}
The family of subgroups $M_n (R) \subseteq Q_n (R),$ $n \in \bZ,$ consitutes a subcomplex $M_* (R) \subseteq Q_* (R).$
\end{proposition}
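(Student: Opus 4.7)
The goal is to verify that $\partial_n(M_n(R)) \subseteq M_{n-1}(R)$ for every $n \in \mathbb{Z}$. The cases $n \leq 0$ are immediate from the definition ($M_n(R) = 0$ for $n \leq 0$), so the content is entirely in the cases $n \geq 1$. It suffices to check the containment on generators, so fix a degenerate $(n+1)$-ary matrix pair $[B \; | \; A]$ with $E_j[B \; | \; A] = N_j[B \; | \; A]$ for some $j$.

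For $n = 1$, the pair $[B \; | \; A_0, A_1]$ lies in $L_2(R)$, and Proposition~\ref{2-degenerate} says that $E_j = N_j$ for one index forces $E_i = N_i$ for the other. Hence every term in $\partial_1[B \; | \; A_0, A_1] = (E_0 - N_0)[B \; | \; A_0, A_1] - (E_1 - N_1)[B \; | \; A_0, A_1]$ vanishes, so $\partial_1[B \; | \; A_0, A_1] = 0 \in M_0(R)$.

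For $n \geq 2$, the plan is purely simplicial: show that for every $i \neq j$, both $E_i[B \; | \; A]$ and $N_i[B \; | \; A]$ are themselves degenerate as $n$-ary pairs (so each term $(E_i - N_i)[B \; | \; A]$ already lies in $M_{n-1}(R)$, while the $i = j$ term is zero). Split into $i < j$ and $i > j$. When $i < j$, Proposition~\ref{face relations} gives
\[
E_{j-1}(E_i[B \; | \; A]) = E_i(E_j[B \; | \; A]) = E_i(N_j[B \; | \; A]) = N_{j-1}(E_i[B \; | \; A]),
\]
where the first and third equalities use $E_i E_j = E_{j-1} E_i$ and $E_i N_j = N_{j-1} E_i$, respectively, and the middle equality uses the degeneracy hypothesis. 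This shows $E_i[B \; | \; A]$ is degenerate at $j-1$. Applying exactly the same chain with $N_i$ replacing one $E_i$ (now using $N_i N_j = N_{j-1} N_i$ and $N_i E_j = E_{j-1} N_i$) shows $N_i[B \; | \; A]$ is also degenerate at $j-1$. The case $i > j$ is symmetric: apply $E_j$ and $N_j$ to each of $E_i[B \; | \; A]$ and $N_i[B \; | \; A]$, use the two identities with roles of $j$ and $i$ swapped to slide $j$ past $i$, and exploit $E_j = N_j$ on $[B \; | \; A]$ to equate the resulting expressions; this shows both faces are degenerate at $j$.

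There is no serious obstacle once the face relations are in hand; the only mild subtlety is the bookkeeping of the index shift ($j \mapsto j-1$ for $i < j$, $j$ unchanged for $i > j$), which is exactly the pattern dictated by Proposition~\ref{face relations}. The $n = 1$ case cannot be absorbed into the general argument because $M_0(R) = 0$ and there is no notion of degeneracy in $L_1(R)$; here Proposition~\ref{2-degenerate} plays the role that the face identities play in higher dimension.
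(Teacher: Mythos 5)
Your proof is correct and follows the same route as the paper: the $n=1$ case via Proposition~\ref{2-degenerate}, and for $n\geq 2$ the observation that the $i=j$ term vanishes while the remaining faces $E_i[B\;|\;A]$ and $N_i[B\;|\;A]$ are themselves degenerate. The paper dismisses that last point as "easy to check"; your use of the commutation relations of Proposition~\ref{face relations} (with the index shift $j\mapsto j-1$ for $i<j$ and $j$ unchanged for $i>j$) is exactly the verification it has in mind.
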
  

\begin{proof}
Suppose that $n > 1$ and $[B \; | \; A] \in L_{n+1}$ is degenerate. Then $E_j[B \; | \; A] = N_j[B \; | \; A]$ for some $j,$ $0 \leq j \leq n+1,$ and it is easy to check that 
$$\partial_n [B \; | \; A] = \sum_{i \neq j} \; (-1)^i (E_i [B \; | \; A] - N_i [B \; | \; A])$$
is a linear combination of degenerate $(n-1)$-ary matrix pairs. Thus $\partial_n (M_n (R)) \subseteq M_{n-1}(R)$ for all $n > 1.$ For $n = 1,$ $\partial_1 (M_1 (R)) = 0 = M_0 (R)$ is a consequence of Proposition~\ref{2-degenerate}. 
\end{proof}

The complex in which we are interested is given by $C_* (R) := Q_* (R)/M_* (R).$ It is called the {\em complex of nondegenerate matrix pairs} of $R.$ If $f: R \to S$ is a morphism of rings, then for every $n,$ 
$Q_n (f) : M_n (R) \subseteq M_n (S),$ which induces a morphism of complexes $C_* (f) : C_* (R) \to C_* (S).$ If 
$f : R \to S$ is an epimorphism, then $Q_* (f)$ is an epimorphism of complexes, as is the induced $C_* (f).$ 

For every $n,$ denote by $B_n (R) \subseteq C_n (R)$ the subgroup of $n$-boundaries, and by $Z_n (R)$ the subgroup of $n$-cycles. Note that these definitions generalize the above definitions of $B_0 (R)$ and $Z_0 (R),$ because 
$C_0 (R) = Q_0 (R)$ and $\partial_1 (M_1 (R)) = 0.$ In particular, if we define the $n${\em -dimensional homology} of $R$ to be the homology $Z_n (R)/B_n (R)$ of $C_* (R)$ at $C_n (R),$ then this coincides with the earlier definition of $H_0 (R)$ for $n = 0.$

If $f : R \to S$ is an epimorphism of rings, then the morphism $Q_* (f) : Q_* (R) \to Q_* (S)$ is an epimorphism of complexes, so the induced morphism $C_* (f) : C_* (R) \to C_* (S)$ is also an epimorphism. Each of the abelian groups 
$Q_n (S)/M_n (S)$ is free, so that a long exact sequence of homology arises.

\subsection{$1$-Dimensional Homology of a Field} Let $k$ be a, not necessarily commutative, field. Corollary~\ref{von Neumann} implies that every matrix pair over $k$ is equivalent to a system. The modular lattice $L_1(k)$ is trivial; it consists of exactly two unary matrix pairs $[0] = 1_1$ and $[1] = 0_1,$ the maximum and minimum elements, respectively. It follows that
$$Q_0(k) = C_0(k) = \bZ 0_1 \oplus \bZ 1_1,$$
and that the group of 0-cycles is given by $Z_0(k) = \bZ(1_1 - 0_1).$ The group of $0$-boundaries is trivial, 
$B_0 (k) = 0,$ because the nondegenerate elements of $L_2 (k)$ have the form $[1, r],$ with $r \neq 0,$ and
$$\partial_1 ([1, r]) = ([1 \; | \; r] - [r]) - ([r \; | \; 1]  - [1]) = (1_1 - 0_1) - (1_1 - 0_1) = 0.$$
Since $\partial_1 = 0$ every $1$-chain is a $1$-cycle, $C_1 (k) = Z_1 (k),$ so that $Z_1 (k)$ is the free abelian group on the elements $[1 \; | \; r],$ where $r \in k^{\times},$ the multiplicative group of nonzero elements of $k.$

There are two families of nondegenerate generators of $C_2 (k),$ given by 
$$[1,r,s] \mbox{ and } \left[ \begin{array}{ccc}
1 & 0 & r \\
0 & 1 & s
\end{array} \right].$$
The boundary map of the first family may be computed as
\begin{eqnarray*}
\partial_2 [1,s,r] & = & ([1 \; | \; s, r] - [s,r]) - ([s \; | \; 1, r] - [1,r]) + ([r \; | \; 1, s] - [1,s]) \\
& = & (1_2 - [1, s^{-1}r]) - (1_2 - [1,r]) + (1_2 - [1,s]) \\
& = & [1,r] - [1,s] - [1, s^{-1}r],
\end{eqnarray*}
because $1_2$ is a degenerate tertiary matrix pair. Similarly,
\begin{eqnarray*}
\partial_2 \left[ \begin{array}{ccc}
1 & 0 & r \\
0 & 1 & s
\end{array} \right] & = &
(\left[ \begin{array}{c|cc}
1 & 0 & r \\
0 & 1 & s
\end{array} \right] - \left[ \begin{array}{cc}
0 & r \\
1 & s
\end{array} \right]) \\
& - & (\left[ \begin{array}{c|cc}
0 & 1 & r \\
1 & 0 & s
\end{array} \right] - \left[ \begin{array}{cc}
1 & r \\
0 & s
\end{array} \right]) \\
& + & (\left[ \begin{array}{c|cc}
r & 1 & 0 \\
s & 0 & 1
\end{array} \right] - \left[ \begin{array}{cc}
1 & 0 \\
0 & 1
\end{array} \right]). 
\end{eqnarray*}
All three systems that appear have a matrix of coefficients row equivalent to $I_2,$ and are therefore equivalent to the degenerate binary matrix pair $[I_2] = 0_2.$ To simplify the other terms, just note that
$$\left[ \begin{array}{c|cc}
1 & 0 & r \\
0 & 1 & s
\end{array} \right] = [1 \; | \; 0,r] \wedge [1,s] = 1_2 \wedge [1,s] = [1,s];$$
$$\left[ \begin{array}{c|cc}
0 & 1 & r \\
1 & 0 & s
\end{array} \right] = [1,r] \wedge [1 \; | \; 0,s] = [1,r] \wedge 1_2 = [1,r];$$
and, the most interesting case,
$$\left[ \begin{array}{c|cc}
r & 1 & 0 \\
s & 0 & 1
\end{array} \right] = \left[ \begin{array}{c|cc}
r & 1 & 0 \\
0 & -sr^{-1} & 1
\end{array} \right] = [r \; | \; 1,0] \wedge [-sr^{-1}, 1] = 1_2 \wedge [1, - rs^{-1}] = [1, -rs^{-1}].$$
The group $B_1 (k) \subseteq Z_1 (k)$ of $1$-boundaries is therefore generated by the two families
\begin{equation} \label{1-boundary A}
[1,r] - [1,s] - [1, s^{-1}r]
\end{equation}
and 
\begin{equation} \label{1-boundary B}
[1,s] - [1,r] + [1, -rs^{-1}],
\end{equation}
as $r$ and $s$ vary over the multiplicative group $k^{\times}.$

\begin{theorem} \label{1-dim relations}
If $k$ is a field, then $H_1(k)$ is the abelian group generated by the symbols $[1,r],$ $r \in k^{\times}$ modulo the relations $[1,r] - [1,s] - [1, rs^{-1}]$ and $[1,-1].$ 
\end{theorem}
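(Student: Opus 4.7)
The plan is to exploit the explicit computations of $Z_1(k)$ and $B_1(k)$ carried out just before the theorem. We already know that $Z_1(k) = C_1(k)$ is the free abelian group on the symbols $[1,r]$, $r \in k^{\times}$, and that $B_1(k)$ is the subgroup generated by the two families
\[
\alpha(r,s) := [1,r] - [1,s] - [1, s^{-1}r], \qquad \beta(r,s) := [1,s] - [1,r] + [1, -rs^{-1}],
\]
as $r, s$ range over $k^{\times}$. Writing $V$ for the free abelian group on $\{[1,r]\}_{r \in k^{\times}}$, let $N$ be the subgroup generated by all $\alpha(r,s)$ and $\beta(r,s)$, and let $N'$ be the subgroup generated by the relations appearing in the theorem statement,
\[
\gamma(r,s) := [1,r] - [1,s] - [1, rs^{-1}], \qquad \delta := [1,-1].
\]
Since $H_1(k) = V/N$, proving the theorem reduces to establishing $N = N'$.

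The inclusion $N' \subseteq N$ is pure bookkeeping: the generator $\delta$ equals $\beta(1,1)$, while $\gamma(r,s)$ is obtained from $\alpha$ by the substitution $(r',s') := (r, rs^{-1})$, which makes $(s')^{-1}r' = sr^{-1}\cdot r = s$ and hence $\alpha(r, rs^{-1}) = \gamma(r,s)$.

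For the reverse inclusion $N \subseteq N'$, the key observation is that, after relabeling $r = ab$ and $s = b$, the relation $\gamma$ reads $[1,ab] = [1,a] + [1,b]$ in $V/N'$. Hence $r \mapsto [1,r] + N'$ is a group homomorphism from $k^{\times}$ into the \emph{abelian} group $V/N'$, so it factors through $(k^{\times})_{\mathrm{ab}}$. This at once yields the congruences $[1, s^{-1}r] \equiv [1, rs^{-1}]$ and, using $\delta$, $[1,-rs^{-1}] \equiv [1,-1] + [1,rs^{-1}] \equiv [1,rs^{-1}]$ modulo $N'$. Substituting these into $\alpha(r,s)$ and $\beta(r,s)$, and then applying $\gamma$ to eliminate the remaining $[1,rs^{-1}]$ in $\beta$, reduces each to zero modulo $N'$, giving $N \subseteq N'$.

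I expect no genuine obstacle to the argument; it is essentially controlled re-indexing. Its content is the recognition that the two boundary families produced by $\partial_2$ encode precisely the multiplicative-homomorphism axiom together with the vanishing of $[1,-1]$, which is exactly what the following corollary needs in order to identify $H_1(k) \cong (k^{\times})_{\mathrm{ab}}/\{\pm 1\}$.
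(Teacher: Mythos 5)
Your proof is correct and takes essentially the same route as the paper: both arguments reduce the theorem to showing that the boundary families $\alpha(r,s),\beta(r,s)$ and the stated relations $\gamma(r,s),[1,-1]$ generate the same subgroup of the free abelian group on the symbols $[1,r]$, and then verify the two inclusions. Your one-line identity $\gamma(r,s)=\alpha(r,rs^{-1})$ is a slightly slicker way to obtain the inclusion $N'\subseteq N$ than the paper's step-by-step specializations, and your packaging of the reverse inclusion via the homomorphism $r\mapsto [1,r]+N'$ factoring through $(k^{\times})_{\ab}$ is the same computation the paper performs by hand.
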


\begin{proof}
First, we show that these two relations hold in $H_1 (k).$ Letting $r = s = 1$ in both relations yields 
$[1,1] = [1, -1]$ in $H_1 (k);$ then, letting $r = 1$ in both relations yields $[1, \pm s^{-1}] = - [1,s]$ in $H_1 (k);$ and, finally, letting $s = t^{-1}$ in the first relation yields
$$[1,r] + [1,t] = [1,tr].$$
Thus $[1, tr] = [1, rt],$ which applied to the first relation, yields
$$[1,r] - [1,s] = [1, s^{-1}r] = [1, rs^{-1}].$$

Suppose, on the other hand, that we are given the relations $[1,r] - [1,s] - [1, rs^{-1}]$ and $[1,-1].$ Letting $r = s$ yields $[1,1] = 0;$ then, letting $r = 1$ yields $[1, s^{-1}] = -[1,s];$ and finally, letting $s = t^{-1}$ yields
$$[1,r] + [1,t] = [1,r] - [1,t^{-1}] = [1,rt].$$
Whence $[1,r] - [1,s] = [1, rs^{-1}] = [1, s^{-1}r]$ and 
$[1,s] - [1,r] = [1,rs^{-1}] = [1, rs^{-1}] - [1,-1] = [1, -rs^{-1}].$
\end{proof}

Recall~\cite[\S 2.2]{Rosen} that the first $K$-group of a field $k$ is the abelianization of
the multiplicative group, $$K_1 (k) = (k^{\times})_{\ab}.$$
A standard group-theoretic argument shows that if $\grG$ is a group, then its abelianization $\grG_{\ab}$ is isomorphic to the free abelian group on the symbols $x_g,$ $g \in \grG,$ modulo the family of relations 
$$x_g - x_h - x_{gh^{-1}},$$
as $g$ and $h$ vary over $\grG.$ Theorem~\ref{1-dim relations} thus implies the following.

\begin{corollary} \label{1-dim homo} 
$H_1 (k) \isom (k^{\times})_{\ab}/\{ \pm 1 \}.$
\end{corollary}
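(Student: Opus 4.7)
The plan is to read off the corollary directly from Theorem~\ref{1-dim relations} by matching its presentation of $H_1(k)$ against the standard presentation of an abelianization quoted in the paragraph preceding the corollary. Concretely, the group-theoretic fact recalled there says that $(k^{\times})_{\ab}$ is the free abelian group on symbols $x_g$, $g \in k^{\times}$, modulo the relations $x_g - x_h - x_{gh^{-1}}$; the further quotient by $\{\pm 1\}$ is the same as imposing the extra relation $x_{-1}$. So one side has generators $[1,r]$ and relations $[1,r] - [1,s] - [1,rs^{-1}]$ together with $[1,-1]$, and the other has generators $x_r$ and relations $x_r - x_s - x_{rs^{-1}}$ together with $x_{-1}$. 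These are visibly the same presentation.

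The concrete plan is therefore to construct mutually inverse maps $\Phi : H_1(k) \to (k^{\times})_{\ab}/\{\pm 1\}$ and $\Psi : (k^{\times})_{\ab}/\{\pm 1\} \to H_1(k)$. First I would define $\Phi'$ on the free abelian group on the symbols $[1,r]$ by $[1,r] \mapsto \bar{x}_r$, the class of $x_r$ modulo $\{\pm 1\}$. Then I would check that the two families of relations supplied by Theorem~\ref{1-dim relations} are sent to zero: the relation $[1,r] - [1,s] - [1, rs^{-1}]$ maps to $\bar{x}_r - \bar{x}_s - \bar{x}_{rs^{-1}}$, which is a defining relation of $(k^{\times})_{\ab}$; and $[1,-1]$ maps to $\bar{x}_{-1}$, which is zero in the quotient by $\{\pm 1\}$. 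Hence $\Phi' $ descends to a well-defined $\Phi$.

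Conversely, I would define $\Psi'$ on the free abelian group on the $x_r$ by $x_r \mapsto [1,r]$, and verify that the defining relations of $(k^{\times})_{\ab}/\{\pm 1\}$ are respected: the relation $x_r - x_s - x_{rs^{-1}}$ lands on $[1,r] - [1,s] - [1,rs^{-1}]$, which is trivial in $H_1(k)$ by Theorem~\ref{1-dim relations}, and $x_{-1}$ lands on $[1,-1]$, which is also trivial by the same theorem. So $\Psi'$ descends to $\Psi$.

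Finally, $\Phi \circ \Psi$ and $\Psi \circ \Phi$ act as the identity on the respective generating sets, so they are mutual inverses and $H_1(k) \cong (k^{\times})_{\ab}/\{\pm 1\}$. There is no real obstacle here: the entire content of the corollary is already encoded in Theorem~\ref{1-dim relations}, and the only task is the bookkeeping of matching relations across the two presentations.
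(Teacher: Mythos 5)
Your proposal is correct and is essentially the paper's own argument: the paper derives the corollary directly from Theorem~\ref{1-dim relations} by matching its presentation of $H_1(k)$ against the quoted presentation of $\grG_{\ab}$ (with the extra relation $x_{-1}$ accounting for the quotient by $\{\pm 1\}$), exactly as you do. Your explicit construction of the mutually inverse maps $\Phi$ and $\Psi$ just spells out the bookkeeping the paper leaves implicit.
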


\section{The Model Theory of Modules}

In this final section, we consider several characterizations of the pre-order $\leq_n$ on $L'_n (R).$ These various, but equivalent, formulations may seem more concrete, and serve as a historical reference for the origin of the pre-order $\leq_n.$

\subsection{A Completeness Theorem} The {\em language} (cf.~\cite[\S 1.1]{Mike's book}) for left $R$-modules is $\Lang (R) = (+,-,0,r)_{r \in R}.$ It contains the language $(+,-,0)$ for abelian groups together with unary function symbols $r,$ one for every element $r \in R,$ and denoted the same way. The symbols from the abelian group language are intended to interpret the underlying abelian group structure of a left $R$-module ${_R}M,$ while the unary function symbol $r$ is intended to interpret the action of the corresponding ring element $r \in R$ on $M.$

The {\em atomic formulae} in $\Lang (R)$ are of the form
\begin{equation} \label{atom}
b_1w_1 \pm b_2w_2 \pm \cdots \pm b_mw_m \doteq a_1v_1 \pm a_2v_2 \pm \cdots \pm a_nv_n,
\end{equation}
where the $v_i$ and $w_j$ are variables, and $a_i$ and $b_j$ elements from $R$ acting as scalars on the left.
These formulae are nothing more than linear equations.

The standard axioms for a left $R$-module are expressible in $\Lang (R).$ In the language $\Lang (R),$ it is not possible to quantify over the ring $R,$ so that this collection of axioms, denoted by $T(R),$ is usually infinite. For example, $T(R)$ contains the axiom schema: for every $r \in R,$ $$(\forall v,w) \; r(v + w) \doteq rv + rw.$$
Relative to the axioms $T(R),$ we may rewrite the atomic formula~(\ref{atom}) as 
$$b_1w_1 + b_2w_2 + \cdots + b_mw_m \doteq a_1v_1 + a_2v_2 + \cdots + a_nv_n.$$
A {\em system} of linear equations is expressible as a finite conjunction of linear equations, 
$$B\bw \doteq A\bv,$$
where $A$ is an $m \times n$ matrix, $B$ an $m \times k$ matrix and $\bv = (v_i)_{i=1}^n$ and $\bw = (w_j)_{j=1}^k$ {\em column vectors} of $n,$ (respectively, $k$) variables.

A {\em positive-primitive formula} is an existentially quantified system of linear equations:
$$\exists \bw \; (B\bw \doteq A\bv).$$
There are two extreme cases of a positive-primitive formula;
\begin{enumerate}
\item if $B = \zero,$ then the positive-primitive formula is equivalent, relative to $T(R),$ to a {\em quantifier-free}
formula, the homogeneous system of linear equations $A\bv \doteq \zero;$ 
\item if $A = I_n,$ the $n \times n$ identity matrix, then the positive-primitive formula is equivalent, relative to $T(R),$ to the {\em divisibility condition} $$B | \bv := \exists \bw \; (B\bw \doteq \bv).$$
Using this notation, we may express a general pp-formula in the free variables $\bv$ as 
$$(B \; | \; A)(\bv) := B | A\bv.$$
\end{enumerate}

\noindent The Rules of Divisibility correspond to three properties of positive-primitive formulae:
\begin{enumerate}
\item $T(R) \vdash \forall \bv \; [(B \; | \; A)(\bv) \to (UB \; | \; UA)(\bv)];$
\item $T(R) \vdash \forall \bv \; [(BV \; | \; A)(\bv) \to (B \; | \; A)(\bv)];$ and
\item $T(R) \vdash \forall \bv \; [(B \; | \; A)(\bv) \to (B \; | \; A + BG)(\bv)],$
\end{enumerate}
and may therefore be interpreted as rules of inference. If the relation $(B \; | \; A) \vdash_n (B' \; | \; A')$ is defined to hold in $L'_n (R)$ whenever
$$T(R) \vdash \forall \bv \; (B \; | \; A)(\bv) \to (B' \; | \; A')(\bv),$$
then we see, in view of Proposition~\ref{transitive}, that $(B \; | \; A) \leq_n (B' \; | \; A')$ implies
$(B \; | \; A) \vdash_n (B' \; | \; A').$ The following completeness theorem is due to Prest.

\begin{theorem} {\bf (Lemma Presta I)} \label{Lemma Presta 1}~\cite[Lemma 1.1.13 and Cor.\ 1.1.16]{Mike's book}  
Given $(B \; | \; A)$ and $(B' \; | \; A')$ in $L'_n (R),$ 
$$(B \; | \; A) \leq_n (B' \; | \; A') \;\;\; \mbox{if and only if} \;\;\; (B \; | \; A) \vdash_n (B' \; | \; A').$$
\end{theorem}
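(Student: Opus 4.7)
The implication $(B \; | \; A) \leq_n (B' \; | \; A') \Rightarrow (B \; | \; A) \vdash_n (B' \; | \; A')$ is already established in the discussion preceding the theorem: the three Rules of Divisibility, read as rules of inference, show that $\vdash_n$ is a pre-order on $L'_n(R)$ satisfying RoD (1)--(3), and by Theorem~\ref{rules} the relation $\leq_n$ is the least such pre-order. The substantive work is the converse.

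For the converse, the plan is to use the \emph{free realization} of the formula $(B \; | \; A)(\bv)$ to convert the syntactic hypothesis into concrete matrix data. With $B$ an $m \times k$ matrix, form the finitely presented left $R$-module
$$M := R^{n+k}/N,$$
where $\bv = (v_1, \ldots, v_n)^{\tr}$ and $\bw = (w_1, \ldots, w_k)^{\tr}$ are the standard bases of the two direct summands of $R^{n+k}$, and $N \subseteq R^{n+k}$ is the submodule generated by the $m$ entries of the column vector $B\bw - A\bv$. Let $\bar\bv, \bar\bw$ denote the respective images in $M$; then $B\bar\bw = A\bar\bv$ holds by construction, so $M$ models $(B \; | \; A)(\bar\bv)$ with witness $\bar\bw$.

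Given the hypothesis $(B \; | \; A) \vdash_n (B' \; | \; A')$, soundness of first-order logic (applied to $T(R)$, which $M$ trivially satisfies) produces a witness $\bar\bw' \in M^{k'}$ such that $B'\bar\bw' = A'\bar\bv$. Lifting $\bar\bw'$ arbitrarily to $\tilde\bw' \in (R^{n+k})^{k'}$ and expanding in the chosen basis yields matrices $V$ (of size $k' \times k$) and $G$ (of size $k' \times n$) with
$$\tilde\bw' = V\bw - G\bv.$$
The requirement that $B'\tilde\bw' - A'\bv$ lie componentwise in $N$ then supplies a $k' \times m$ matrix $U$ satisfying the identity
$$B'V\bw - (B'G + A')\bv \;=\; U(B\bw - A\bv)$$
in $(R^{n+k})^{k'}$. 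Separately equating coefficients of the $R$-linearly independent generators $\bw$ and $\bv$ recovers exactly $UB = B'V$ and $UA = A' + B'G$, which are the defining conditions \eqref{Prest relations} of $(B \; | \; A) \leq_n (B' \; | \; A')$.

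The main subtlety will be purely notational: the sign convention in the lift ($\tilde\bw' = V\bw - G\bv$, rather than both signs positive) must be chosen so that coefficient comparison produces the equations of \eqref{Prest relations} exactly, rather than a sign-twisted variant. A conceptually cleaner alternative, already foreshadowed in the introduction, would be to establish the semantic form Lemma Presta~II (Proposition~\ref{Lemma Presta 2}) directly from the same free-realization argument and then deduce this syntactic version by G\"odel's completeness theorem.
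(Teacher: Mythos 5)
Your proposal is correct, but it is worth pointing out that the paper does not actually prove this theorem: it is quoted from Prest (Lemma~1.1.13 and Cor.~1.1.16 of the cited book), and only the easy direction --- that $\vdash_n$ is a pre-order on $L'_n(R)$ obeying the three Rules of Divisibility, so that $\leq_n$, being the least such pre-order by Theorem~\ref{rules}, is contained in it --- appears in the surrounding commentary, exactly as you describe. Your free-realization argument for the converse is the standard proof of Prest's lemma and it does go through: the module $M = R^{n+k}/N$ is a model of $T(R)$ in which $\overline{\bv}$ satisfies $(B \; | \; A)(\bv)$ with witness $\overline{\bw}$; soundness turns the syntactic entailment into a witness $\overline{\bw'}$ for $(B' \; | \; A')(\overline{\bv})$; and lifting to the free module and comparing coefficients in the basis $\{w_j\} \cup \{v_l\}$ recovers the matrices of Equations~\ref{Prest relations}. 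One small dimension slip: with $B'$ of size $m' \times k',$ the vector $B'\tilde{\bw}' - A'\bv$ has $m'$ components, each lying in the submodule $N$ generated by the $m$ entries of $B\bw - A\bv,$ so the matrix $U$ you extract is $m' \times m,$ not $k' \times m$ (your sizes $k' \times k$ and $k' \times n$ for $V$ and $G$ are right); this is needed for $UB = B'V$ and $UA = A' + B'G$ to typecheck. As for your suggested alternative of proving Lemma Presta~II first and invoking G\"odel completeness: the paper goes in the opposite direction, deriving Presta~II from Presta~I by completeness, and the semantic proof of Presta~II would be the very same free-realization computation, so nothing is gained --- but either order is legitimate.
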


Theorem~\ref{Lemma Presta 1} is a completeness theorem, because it implies that any implication of the form 
$T(R) \vdash \forall \bv \; (B \; | \; A)(\bv) \to (B' \; | \; A')(\bv)$ has a proof using exclusively the rules of inference RoD (1)-(3)

\subsection{Nonhomogeneous Systems} Let $A$ be an $m \times n$ matrix; $B$ and $m \times k$ matrix and ${_R}M$ a left $R$-module. Let us consider the {\em nonhomogeneous system} of linear equations
$$A \bv \doteq \bb,$$
where $\bv$ is a column $n$-vector of variables $(v_i)$ and $\bb$ a column $k$-vector with entries from $M.$ Denote by
$$\Sol_M (A \bv \doteq \bb) := \{ \ba \in ({_R}M)^n \; : \; A\ba = \bb \}$$
the subgroup of $({_R}M)^n$ of solutions in $M$ to the nonhomogeneous system. As in the classical case over a field, if $\Sol_M (A \bv \doteq \bb)$ is nonempty, then it is a coset of the subgroup of solutions $\Sol_M (A \bv \doteq \zero)$ of the corresponding homogeneous system. Given an element $(B \; | \; A) \in L'_n (R),$ define
$$(B \; | \; A)(M) := \bigcup_{\bb \in BM^k} \Sol_M (A \bv \doteq \bb).$$
This is consistent with model-theoretic notation, because $(B \; | \; A)(M)$ is the subset of $({_R}M)^n$
defined in ${_R}M$ by the formula $(B \; | \; A)(\bv),$
\begin{eqnarray*}
(B \; | \; A)({_R}M) & = & \{ \ba \in ({_R}M)^n \; : \; \exists \bc \in ({_R}M)^k \; (B\bc = A\ba) \} \\
& = & \{ \ba \in ({_R}M)^n \; : \; M \models (B \; | \; A)(\ba ) \}.
\end{eqnarray*}
It is an exercise to show that $(B \; | \; A)(M)$ is a subgroup of $(M^n)_{\bZ},$ functorial in ${_R}M.$ Subgroups of the form are called $(B \; | \; A)(M)$ are known by several names : {\em pp-definable subgroups~\cite{Mike's book}, finite matrix subgroups~\cite{Zim}, subgroups of finite definition~\cite{GJ}, etc.}

Let
$\Latt_n (M_{\bZ})$ denote the lattice of subgroups of $(M^n)_{\bZ};$ a map
$\Ev'_M : L'_n (R) \to \Latt_n (M_{\bZ})$ is obtained by the rule $(B \; | \; A) \mapsto (B \; | \; A)({_R}M).$ By Lemma Presta~I, this a morphism of pre-orders, and, because $\Latt_n (M_{\bZ})$ is a partial order, the morphism factors through $L_n (R)$ to yield a morphism 
$$\Ev_M : L_n (R) \to \Latt_n (M_{\bZ}), \;\; [B \; | \; A] \mapsto (B \; | \; A)(M)$$
satisfying the following properties:
\begin{enumerate}
\item if $[B \; | \; A] \leq_n [B' \; | \; A']$ holds in $L_n (R),$ then 
$(B \; | \; A)({_R}M) \leq_n (B' \; | \; A')({_R}M)$ holds in $\Latt_n (M_{\bZ});$
\item $[B \; | \; A] \wedge [B' \; | \; A'] \mapsto (B \; | \; A)(M) \cap (B' \; | \; A')(M);$
\item $[B \; | \; A] +  [B' \; | \; A'] \mapsto (B \; | \; A)(M) + (B' \; | \; A')(M);$ and
\item $1_n (M) = M^n$ and $0_n (M) = 0.$
\end{enumerate} 

We may thus define the relation $(B \; | \; A) \models_n (B' \; | \; A')$ to hold in $L'_n (R)$ provided that 
$(B \; | \; A)(M) \subseteq (B' \; | \; A')(M)$ for every left $R$-module ${_R}M.$ Equivalently, 
$$T(R) \models \forall \bv \; (B|A\bv \to B'|A'\bv).$$
By G\"{o}del's Completeness Theorem, the relations $\vdash_n$ and $\models_n$ on $L'_n (R)$ are equal, so that one 
obtains a second version of Lemma Presta.\bigskip

\begin{proposition} \label{Lemma Presta 2} {\bf (Lemma Presta II.)} 
Given $(B \; | \; A)$ and $(B' \; | \; A')$ in $L'_n (R),$ 
$$(B \; | \; A) \leq_n (B' \; | \; A') \;\;\; \mbox{if and only if} \;\;\; (B \; | \; A) \models_n (B' \; | \; A').$$
\end{proposition}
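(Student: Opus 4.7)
The statement splits into two implications, of which only one is substantive.

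For the forward direction, I would avoid the deductive system entirely and show directly that the semantic relation $\models_n$ itself satisfies the three Rules of Divisibility on $L'_n(R)$. By Theorem~\ref{rules}, $\leq_n$ is the \emph{least} pre-order satisfying (RoD 1)-(3); once $\models_n$ is verified to be such a pre-order, the inclusion $\leq_n \,\subseteq\, \models_n$ follows automatically. Each of the three verifications is a one-line check in an arbitrary left $R$-module $M$: for (1), if $\bar{a} \in (B\mid A)(M)$ is witnessed by $\bar{c}\in M^k$ with $B\bar{c}=A\bar{a}$, then $(UB)\bar{c}=(UA)\bar{a}$, so $\bar{a}\in (UB\mid UA)(M)$; for (2), a witness $V\bar{c}'$ for $(BV\mid A)$ serves as a witness for $(B\mid A)$; for (3), the same $\bar{c}$ witnesses $(B\mid A+BG)$ via $B(\bar{c}+G\bar{a})=(A+BG)\bar{a}$. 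Reflexivity and transitivity of $\models_n$ are immediate from those of set inclusion.

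For the reverse direction, the argument is exactly the one sketched in the prose preceding the statement: combine Lemma Presta~I (Theorem~\ref{Lemma Presta 1}), which identifies $\leq_n$ with the syntactic consequence relation $\vdash_n$, with G\"{o}del's Completeness Theorem applied to the first-order theory $T(R)$, which identifies $\vdash_n$ with $\models_n$. Composing these equivalences gives $\leq_n\,=\,\models_n$.

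\textbf{Main obstacle.} In this form, no obstacle remains, since the hard content has been absorbed into Lemma Presta~I. If one preferred a model-theoretically self-contained proof of the reverse implication (bypassing G\"{o}del), the natural route is the ``generic realization'' argument: present the $R$-module $M(B\mid A) := R^{k+n}/\langle \,\text{rows of } (B\mid -A)\,\rangle$ with distinguished tuples $\bar{v},\bar{w}$ satisfying $B\bar{w}=A\bar{v}$, so that $\bar{v}\in (B\mid A)(M(B\mid A))$ tautologically. The hypothesis $\models_n$ then produces $\bar{w}'\in M(B\mid A)^{k'}$ with $B'\bar{w}'=A'\bar{v}$; lifting each entry of $\bar{w}'$ to $R^{k+n}$ yields matrices $V$ (of size $k'\times k$) and $G$ (of size $k'\times n$) so that $\bar{w}' = V\bar{w}+G\bar{v}$ modulo the relations, and matching components in $R^{k+n}$ of the equation $B'(V\bar{w}+G\bar{v})=A'\bar{v}$ produces the coefficient matrix $U$ with $UB=B'V$ and $UA=A'-B'G$; negating $G$ recovers the defining data for $(B\mid A)\leq_n(B'\mid A')$. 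The only subtlety in this alternative is the standard bookkeeping with left/right scalar actions when lifting elements of $M(B\mid A)^{k'}$ back to $R^{k+n}$.
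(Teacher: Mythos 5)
Your proof is correct, and for the substantive (reverse) direction it is exactly the paper's argument: the paper offers no separate proof of Lemma Presta II beyond the remark that $\vdash_n$ and $\models_n$ coincide by G\"{o}del's Completeness Theorem, so that the result follows from Lemma Presta I. Your forward direction differs only cosmetically: the paper gets $\leq_n\,\subseteq\,\models_n$ by first noting that $\vdash_n$ obeys the Rules of Divisibility (hence contains the least such pre-order $\leq_n$) and then invoking soundness, whereas you check the three rules semantically in an arbitrary module; both arguments rest on the minimality clause of Theorem~\ref{rules}. One small slip: your verification of RoD (3) shows $(B\mid A)(M)\subseteq (B\mid A+BG)(M)$, whereas the rule as stated requires the opposite inclusion $(B\mid A+BG)(M)\subseteq (B\mid A)(M)$; the correct witness is $\bar{c}-G\bar{a}$ rather than $\bar{c}+G\bar{a}$. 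Since $G$ is arbitrary the two families of inclusions are interchanged by $G\mapsto -G$, so nothing is lost. The genuinely different content is your sketched ``generic realization'' argument, which proves the reverse implication without G\"{o}del (and hence without Lemma Presta I) by evaluating both formulas on the module $R^{k+n}$ modulo the rows of $(B\mid -A)$ and reading off $U$, $V$, $G$ from a lift of the witnessing tuple; this is the standard free-realization proof of Prest's lemma, it is correct as sketched (the lift gives $UA=A'-B'G$, repaired by negating $G$), and it buys a self-contained treatment at the cost of the bookkeeping you mention. What it does not buy within this paper is economy, since Lemma Presta I is already being cited from Prest.
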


\subsection{The Tensor Product and Duality} If $(B \; | \; A) \in L'_n (R),$ then, in a right $R$-module $K_R,$ its dual defines the subgroup
$$\left( \begin{array}{c|c}
B^{\tr} & \zero \\ A^{\tr} & I_n  \end{array} \right)(K_R) = 
\{ \ba \in K^n \; : \; K \models \exists \bw \; (\bw A \doteq \ba \wedge \bw B \doteq \zero) \},$$
where the action of $R^{\op}$ is written on the right. Define the relation 
$(B \; | \; A) \triangleleft_n (B' \; | \; A')$ to hold in $L'_n (R)$ provided that for every right $R$-module $K_R$ and left $R$-module ${_R}M,$ the subgroup
$$\left( \begin{array}{c|c}
(B')^{\tr} & \zero \\ (A')^{\tr} & I_n  \end{array} \right)(K) \tensor (B \; | \; A)(M) = 0$$
in the tensor product $K \tensor_R M.$ 

Let us verify that if $(B \; | \; A) \leq_n (B' \; | \; A')$ holds in $L'_n (R),$ then
$(B \; | \; A) \triangleleft_n (B' \; | \; A').$ We are given $U,$ $V$ and $G$ such that $UB = B'V$ and $UA = A' + B'G.$
Let ${\dis \ba \in \left( \begin{array}{c|c} (B')^{\tr} & \zero \\ (A')^{\tr} & I_n  \end{array} \right)(K)}$ 
and $\bb \in (B \; | \; A)(M).$ Then there exist $\bc \in K^{m'}$ such that
$\bc A' = \ba$ and $\bc B' = \zero;$ and $\bd \in M^k$ such that $B\bd = A\bb.$ Then the tensor 
$$\ba \tensor \bb = \sum_{i=1}^n \; a_i \tensor b_i$$
simplifies in $K \tensor_R M$ as
\begin{eqnarray*}
\ba \tensor \bb & = & \bc A' \tensor \bb = \bc (UA - B'G) \tensor \bb \\
& = & \bc UA \tensor \bb = \bc U \tensor A\bb \\
& = & \bc U \tensor B\bd = \bc UB \tensor \bd \\
& = & \bc B'V \tensor \bd = \zero \tensor \bd = 0.  
\end{eqnarray*}
Note how the property that $\bc B' = 0$ has been used twice in this simplification. By Corollary 1.3.8 
of~\cite{Mike's book}, the converse also holds.

\begin{proposition} \label{tensors}
Given $(B \; | \; A)$ and $(B' \; | \; A')$ in $L'_n (R),$ 
$$(B \; | \; A) \leq_n (B' \; | \; A') \;\;\; \mbox{if and only if} \;\;\; 
(B \; | \; A) \triangleleft_n (B' \; | \; A').$$
\end{proposition}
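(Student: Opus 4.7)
The forward direction is exactly the computation carried out immediately above the statement, so only the converse remains. My plan is to deduce the order-theoretic inequality from the tensor condition by first appealing to Proposition~\ref{Lemma Presta 2}: it suffices to produce an inclusion $(B\;|\;A)(M)\subseteq(B'\;|\;A')(M)$ in every left $R$-module $M$. To further reduce, I would introduce the free realization of $(B\;|\;A)$, namely the finitely presented left module $M_0$ on generators $v_1,\ldots,v_n,w_1,\ldots,w_k$ subject to the $m$ relations $B\bw\doteq A\bv$, together with the distinguished tuple $\overline{\bv}$ of images of the first $n$ generators. Then $\overline{\bv}\in(B\;|\;A)(M_0)$ tautologically, and any $\bb\in(B\;|\;A)(M)$ is the image of $\overline{\bv}$ under a canonical homomorphism $M_0\to M$; naturality of pp-subgroups then reduces the whole family of inclusions to the single statement $\overline{\bv}\in(B'\;|\;A')(M_0)$.

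Suppose, for contradiction, that $\overline{\bv}\notin(B'\;|\;A')(M_0)$. Its image in the quotient abelian group $M_0^n/(B'\;|\;A')(M_0)$ is then nonzero, and because $\mathbb{Q}/\mathbb{Z}$ is injective over $\mathbb{Z}$ there is a character $\chi:M_0^n\to\mathbb{Q}/\mathbb{Z}$ that annihilates $(B'\;|\;A')(M_0)$ but not $\overline{\bv}$. Decompose $\chi=(\chi_1,\ldots,\chi_n)$ with $\chi_i\in K:=\Hom_{\mathbb{Z}}(M_0,\mathbb{Q}/\mathbb{Z})$, which is a right $R$-module under $(\beta r)(m):=\beta(rm)$, and set $\ba:=(\chi_1,\ldots,\chi_n)\in K^n$. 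The evaluation pairing $K\tensor_R M_0\to\mathbb{Q}/\mathbb{Z}$, $\beta\tensor m\mapsto\beta(m)$, is well-defined by the balancing identity and sends $\ba\tensor\overline{\bv}$ to $\sum_i\chi_i(v_i)=\chi(\overline{\bv})\neq 0$, so $\ba\tensor\overline{\bv}\neq 0$ in $K\tensor_R M_0$. This will contradict the tensor hypothesis as soon as I can certify $\ba\in(B'\;|\;A')^*(K)$.

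That final verification is, I expect, the main obstacle. Using the description of the dual formula given just before the statement of the proposition, the task is to produce $\bc=(\beta_1,\ldots,\beta_{m'})\in K^{m'}$ satisfying $\bc B'=\zero$ and $\bc A'=\ba$. The key observation is that $(B'\;|\;A')(M_0)=(A')^{-1}(B'(M_0^{k'}))$, so multiplication by $A'$ induces an injection
$$M_0^n/(B'\;|\;A')(M_0)\;\hookrightarrow\;M_0^{m'}/B'(M_0^{k'}).$$
Injectivity of $\mathbb{Q}/\mathbb{Z}$ then lets me extend $\chi$ (regarded on the left-hand quotient) to a character of the right-hand quotient, and lift the result to $\hat\chi:M_0^{m'}\to\mathbb{Q}/\mathbb{Z}$ with $\hat\chi\circ B'=0$ and $\hat\chi\circ A'=\chi$. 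Writing $\hat\chi=(\beta_1,\ldots,\beta_{m'})$ with $\beta_i\in K$ and reading off the two identities component-by-component on tuples of the form $\mathbf{e}_j m$ would yield precisely the required equations $\bc B'=\zero$ in $K^{k'}$ and $\bc A'=\ba$ in $K^n$. With that, both $\ba\tensor\overline{\bv}\neq 0$ and $\ba\in(B'\;|\;A')^*(K)$ are in hand, and the contradiction with $(B\;|\;A)\triangleleft_n(B'\;|\;A')$ is complete.
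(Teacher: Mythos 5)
Your proposal is correct, and it goes well beyond what the paper actually does: the paper proves only the forward implication (the displayed tensor computation preceding the statement) and settles the converse by citing Corollary 1.3.8 of Prest's book, whereas you supply a complete, self-contained proof of that converse. Your route --- reduce via Lemma Presta II and a free realization to the single module $M_0$ presented by the relations $B\bw \doteq A\bv$ with distinguished tuple $\overline{\bv}$; separate $\overline{\bv}$ from $(B' \; | \; A')(M_0)$ by a character $\chi$ into $\mathbb{Q}/\mathbb{Z}$; and realize $\chi$ as an element $\ba$ of the dual subgroup of the character module $K = \Hom_{\mathbb{Z}}(M_0, \mathbb{Q}/\mathbb{Z})$ so that the evaluation pairing detects $\ba \tensor \overline{\bv} \neq 0$ --- is in essence the standard proof of elementary duality, i.e.\ the argument behind the result the paper cites. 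The steps all check out: the identity $(B' \; | \; A')(M_0) = (A' \cdot -)^{-1}\bigl(B'(M_0^{k'})\bigr)$ does yield an injection of $M_0^n/(B' \; | \; A')(M_0)$ into $M_0^{m'}/B'(M_0^{k'})$, injectivity of $\mathbb{Q}/\mathbb{Z}$ gives the lift $\hat{\chi}$ with $\hat{\chi} \circ B' = 0$ and $\hat{\chi} \circ (A' \cdot -) = \chi$, and evaluating componentwise on tuples $\mathbf{e}_j m$ correctly translates these into $\bc B' = \zero$ and $\bc A' = \ba$ under the right action $(\beta r)(m) = \beta(rm)$. What your approach buys is independence from the external reference at the cost of importing the free-realization technique; what the paper's citation buys is brevity. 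If you write this up, the only point worth making fully explicit is the naturality step --- that a homomorphism $M_0 \to M$ carries $(B' \; | \; A')(M_0)$ into $(B' \; | \; A')(M)$ --- which is immediate but is the hinge on which the reduction to $M_0$ turns.
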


\end{document}